\pgfplotsset{compat=1.8}
\newtheorem{theorem}{Theorem}[section]
\newtheorem{lemma}[theorem]{Lemma}
\newtheorem{corollary}[theorem]{Corollary}
\newtheorem{proposition}[theorem]{Proposition}
\newtheorem{definition}[theorem]{Definition}
\newtheorem{remark}[theorem]{Remark}
\newtheorem*{theorem*}{Theorem}
\newcommand{\of}[1]{\left( #1 \right)}
\newcommand{\beqn}{\begin{equation}}
\newcommand{\eeqn}{\end{equation}}
\newcommand{\norm}[1]{\left\Vert #1 \right\Vert}
\newcommand{\abs}[1]{\left\vert #1 \right\vert}
\newcommand{\R}{\mathbb{R}}
\newcommand{\C}{\mathbb{C}}
\newcommand{\N}{\mathbb{N}}
\newcommand{\D}{\mathbb{D}}
\newcommand{\im}{\textrm{im }}
\newcommand{\id}{\textrm{Id}}
\newcommand{\Strip}{\textrm{St}}
\renewcommand{\Re}{\textrm{Re}}
\renewcommand{\Im}{\textrm{Im}}
\newdimen\GridSize
\tikzset{
    GridSize/.code={\GridSize=#1},
    GridSize=3pt
}
\date{}
\title{Complex time blow-up of the nonlinear heat equation}
\author{    \textbf{Hannes Stuke} \footnote{Free University of Berlin, Berlin, Germany; email: h.stuke@fu-berlin.de} }
\begin{document}

\pagenumbering{gobble}

\maketitle

\section*{Abstract}

This paper investigates the connection between blow-up solutions of scalar reaction-diffusion equations, in particular of $u_t = u_{xx} + u^2,
$ and its counterpart - eternally existing solutions like heteroclinic orbits - by complex time.
We prove that heteroclinic orbits in one-dimensional unstable manifolds are accompanied by blow-up solutions. Furthermore we show, that we can continue blow-up solutions into a slit complex time and eventually back to the real axis. The solution picks up an imaginary factor after continuation which is related to the eigenvalue relations of the linearizations at the source and the sink of the heteroclinic orbit.

\clearpage

\addtocontents{toc}{\protect\setcounter{tocdepth}{1}}

%\tableofcontents

\newpage

\pagenumbering{arabic}

%\input{introduction}
%\newpage
%\section{Blow-up of the nonlinear heat equation} \label{cpt:blow_up_nonlinear_heat}

\section{Introduction}

In this paper we study the nonlinear heat equation with Dirichlet boundary conditions

\begin{equation} \label{eq:nonlinear_heat_general_p}
u_t = \Delta u + u^p, \qquad x \in \Omega \subset \R^N, \qquad u \vert_{\partial \Omega}  = 0, \qquad p > 1 .
\end{equation}

It is well-known that solutions may blow-up and much is known about the blow-up behaviour, i.e. blow-up rate, blow-up set and blow-up profile, see \citep{quittner07}, \citep{hu11} and the references therein.

There have also been attempts to study the complex-valued nonlinear heat equation \citep{guo12}, \citep{zaag15} for real time only, as one can under that circumstances rewrite \eqref{eq:nonlinear_heat_quad} as system of real and imaginary part of $ u = v + i w $.

\begin{align*}
v_t &= v_{xx} + v^2 - w^2, \\
w_t &= w_{xx} + 2 w v.
\end{align*}

Especially \citep{zaag15} was able to derive detailed asymptotic expansions of the blow-up profile close to the blow-up. 

A further question asked already by \citep{vazquez04} concerns the continuation of solutions after blow-up. One possibility to tackle the problem is monotone approximation from below, which let to the notion of complete blow-up, see e.g. \citep{baras87}, \citep{lacey88}, \citep{martel98}, \citep{polacik05}, \citep{quittner07}.

Consider the following truncations of the nonlinearity $ f$ defined by $f_k \of{u} := \min \of{ u, k}$. Then the equation to the truncated nonlinearity

\[
u^k_t = \Delta u^k + f_k \of{u^k}, \qquad u^k \of{0} = u_0,
\]

possess global solutions. For any $ k$ the solution $u^k $ coincides with the solution $ u $ of \eqref{eq:nonlinear_heat_general_p} as long as $ u $ is below $k$. Suppose now, that $ u $ is positive and blows up at time $ T $. Then the solutions $u^k $ monotonically approximate $ u $ from below before the blow-up and even exist after the blow-up. So the question is now, if the solutions $ u^k $ converge for $ k \to \infty $ and $ t > T $, that is in which sense the function

\[
\bar u \of{t,x} := \lim_{k\to \infty} u^k \of{t, x}, 
\]

exists.

\vspace*{\fill}

\paragraph*{Acknowledgements} The author wants to thank Bernold Fielder, Pavel Gurevich, Jia-Yuan Dai and Phillipo Lappicy for inspiring discussions. The author was supported by SFB 647 and SFB 910.

\newpage

The time of complete blow-up is defined as follows

\[
T_c := \inf \left \{ t \geq T, \; \bar u \of{t,x} = \infty, \; \forall x \in \Omega \right \}.
\]

Note, that after $ T_c $, the solution is unbounded for all $ x \in \Omega $. This implies that there are no \enquote{weak} solutions that are compatible with point wise approximations from below, e.g. solutions of weaker integrability or measure-valued solutions after blow-up. A solution blows-up completely at $ T $ if $ T_c = T $. 
The quadratic one-dimensional nonlinear heat equation has only complete blow-up \citep{quittner07}.

In this paper we will study the blow-up behaviour from the perspective of complex time. We will especially tackle the problem of continuation, since in the situation of complete blow-up we are lacking a proper notion of continued solutions. We consider the quadratic nonlinear heat equation

\begin{equation} \label{eq:nonlinear_heat_eq_intro}
u_t = u_{xx} + u^2, \qquad x \in \left(-1, 1 \right), \qquad u \of{\pm 1} = 0.
\end{equation}

and try to address the following questions

\begin{enumerate}[label=(\roman*)]
\item Can we prove the existence of blow-up by studying only bounded solutions?
\item Can we extend the solution after blow-up though complex time back to the real axis? And do analytic continuations coincide after real axis along different paths of continuation?
\end{enumerate}

In Section \ref{sec:unstable_manifold_boundary_het_blow_up} we prove that if there exists a one-dimensional fast unstable manifold of an evolution equation of the form

\[
u_t = A u + f \of{u},
\] 

which contains a heteroclinic orbit connecting two equilibria, there must be a complex-time blow-up orbit starting with real initial data and a real time blow-up orbit starting with complex initial data. \\ 
We give a more detailed description for blow-up solutions of equation \eqref{eq:nonlinear_heat_eq_intro}. Equation \eqref{eq:nonlinear_heat_eq_intro} possesses a positive equilibrium $ u_+ $ with one-dimensional fast unstable manifold $ W^u $. Locally the manifold is a graph $ \Upsilon $ over the eigenspace of the fast unstable mode. One side of the fast unstable manifold is a heteroclinic orbit to zero, whereas the other side is a blow up orbit, \citep{quittner07}. 

We denote the solution flow of equation \eqref{eq:nonlinear_heat_eq_intro} by $ \Phi(t,u_0) $. The domain to which we can continue solutions are so called spall strips.

\begin{definition}[Spall strip]
For any $ \delta > 0 $ and $ 0 < T < T_1 $ we denote the upper/lower spall strip as
\[
S_\pm \of{\delta, \left[ T, T_1 \right] } := \left \{ t \in \C \setminus \left[T, T_1 \right], \; 0 \leq \pm \Im \of{t} \leq \delta  \right \}.
\]
\end{definition}

\begin{figure}[h]
\centering
\begin{tikzpicture}

\draw[fill=white, draw=white, postaction={pattern=my north west lines,pattern color=gray}] (-1,1) -- (-1,0) -- (4,0)  -- (4,1) -- (-1,1);

\draw  (4,1) -- (-1,1) node[left] {$\delta$};
\draw (0,1) -- (1,1);
\draw[fill] (0,0) circle [radius = 0.07cm] node[below] {$0$};

\draw (-1,0) -- (1.43,0);
\draw (3.07,0) -- (4,0);

\draw[line width = 0.15 cm, white] (1.5,0) -- (3,0);

\draw (1.5,0) circle [radius = 0.07cm] node[below] { $ T $};

\draw (3,0) circle [radius = 0.07cm] node[below] { $ T_1 $};  

\end{tikzpicture}
\caption{Upper Spall strip domain}
\end{figure}
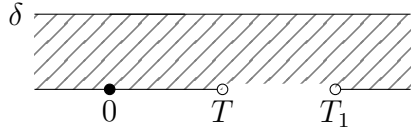

The time $T_1 $ is also called resurrection time.

We will prove the following theorem.

\begin{theorem}
There exists a $ \delta > 0 $ such that the time analytic continuations of the real blow up orbit on the fast unstable manifold of $u_+ $, i.e. $ \Phi \of{t, (\tau, \Upsilon \of{\tau}} $, $ 0 < \tau < \delta $ exists and has the following properties:
\begin{enumerate}[label=(\roman*)]
\item It blows up completely after at time $ T$.
\item It can be continued to upper and lower spall strips $S_\pm \of{\delta, \left[ T, T_1 \right] }$  for some
\[
T_1 < 2 C_0 \max \left\{ \max_{x \in I} \frac{v_0 \of{x}}{w_0 \of{x}}, \norm{u_+}_\infty \right \}.
\]
The constant $ C_0 > 0 $ depends only on the heat semigroup.
\item The upper and lower time path continuations do not coincide after $ T_1 > 0 $
\end{enumerate}
\end{theorem}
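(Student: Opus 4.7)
The plan is to combine Duhamel's formula for the Dirichlet heat semigroup in complex time with a monodromy argument at the blow-up. I first address part (i): since $\Upsilon(\tau)$ for $\tau > 0$ lies on the blow-up branch of the fast unstable manifold $W^u(u_+)$, the trajectory leaves $u_+$ in the positive unstable direction and can be shown to blow up at a finite time $T = T(\tau)$ either by a Kaplan-type argument against the first Dirichlet eigenfunction or directly from the graph structure of $\Upsilon$. Complete blow-up in $L^\infty$ for the one-dimensional quadratic nonlinear heat equation is then the cited result from \citep{quittner07}.

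For part (ii) I would work with the mild formulation
\[
u(t) = e^{(t-t_*)\Delta}\, u(t_*) + \int_{t_*}^t e^{(t-s)\Delta}\, u(s)^2\, ds,
\]
exploiting that the Dirichlet heat semigroup $e^{z\Delta}$ on $(-1,1)$ is analytic and sectorial in the open right half-plane. I would fix $t_* \in (0, T)$ close to the real axis with $\Im(t_*) = \pm\delta$ and apply a contraction mapping to extend $u$ along a contour $\gamma$ that skirts the slit $[T, T_1]$ from above (or below) and returns to the real axis at some $t > T_1$. The key dynamical observation is that, writing $u = v + iw$, the real and imaginary parts satisfy $v_t = v_{xx} + v^2 - w^2$ and $w_t = w_{xx} + 2 v w$, so rotating into complex time introduces a cooling term $-w^2$ that eventually dominates the blow-up term $v^2$. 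The upper bound on $T_1$ is controlled by the time after which this cooling takes over; tracking the heat semigroup bounds one obtains the dependence on $\max_x v_0/w_0$ and on $\norm{u_+}_\infty$ with the universal constant $C_0$ coming from the semigroup estimate.

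For part (iii) I would argue by contradiction: if the upper and lower continuations coincided on some open interval of the real axis past $T_1$, then by the identity theorem they would give a single-valued analytic extension across the full spall strip, in particular across the blow-up time $T$. But complete blow-up at $T$ rules out a removable singularity, since $\bar u(T, x) = \infty$ for every $x \in \Omega$, a contradiction. A more quantitative reading comes from the self-similar blow-up ansatz: near $T$ the solution looks like $(T-t)^{-1}\phi\bigl(x/\sqrt{T-t}\bigr)$, and rotating $T - t$ through $e^{+ i\pi}$ versus $e^{- i\pi}$ picks up different phase factors whose ratio is governed by the fast unstable eigenvalues at the source $u_+$ and the sink $0$, as anticipated in the abstract.

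The hard part will be the complex-time Duhamel contraction in part (ii), because the contour must stay within a spall strip of fixed thickness $\delta$ while passing very close to the singular slit. There one must simultaneously control the growth of $u^2$ as $t$ approaches $T$, the degeneracy of the smoothing bound on $e^{z\Delta}$ as $\Re(z) \to 0$, and the coercivity gained from the $-w^2$ term; balancing these three estimates is what pins down the explicit dependence of $T_1$ on $v_0/w_0$ in the statement.
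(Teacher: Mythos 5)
Your outline for (i) matches the paper (blow-up on the positive branch of $W^u(u_+)$, with completeness quoted from the literature), but (ii) and (iii) both have genuine gaps. For (ii), the paper does not run a Duhamel contraction along a contour skirting the slit; that approach breaks down exactly where you flag it, since $\norm{u(s)}_\infty$ is uncontrolled as the contour approaches $[T,T_1]$ and no fixed-point argument can manufacture that a priori bound. The paper's mechanism is a maximum principle for the $(v,w)$ system: the orbits of the reaction ODE $\dot U = U^2$ are circles through the origin, the solution at $t_0+i\tau$ (for $t_0<T$ and small $\tau>0$) has positive imaginary part with positive boundary derivative by the Hopf lemma, hence is trapped in an invariant solution disk and lies to the left of a ray $s e^{i\varphi}$ with $\varphi$ of order $\tau$; comparison with the explicit ray solution yields $\norm{u(t_0+i\tau+\tilde t)}_\infty \leq (1+\varepsilon)/(C\tilde t\,\tau)$ uniformly for all later real times, and one descends to the real axis once this is below the local existence threshold --- that is precisely where $T_1$ and the quotient $v_0/w_0$ enter. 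Your ``cooling term $-w^2$'' heuristic is not the operative estimate and does not by itself give boundedness.

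For (iii) the proposed contradiction fails: if the upper and lower continuations agreed on an interval past $T_1$, they would glue to a single-valued analytic function on a strip minus the slit $[T,T_1]$, but single-valuedness does not make the singularity removable. The function is unbounded near the slit (complete blow-up), and unbounded single-valued analytic functions on such punctured domains exist (e.g.\ $1/(t-T)$), so no contradiction arises. The paper instead computes the monodromy explicitly on the one-dimensional fast unstable manifold: by separation of variables and the residue theorem, encircling $u_+$ costs complex time $2\pi i/\mu$, while passing around $0$ in the first eigendirection costs a time governed by the eigenvalue $\pi^2/4$; the paper's eigenvalue lemma shows $\mu > \pi^2/4$, so the two periods are incommensurate and the two continuations cannot close up. Your closing remark about eigenvalue ratios at the source and sink is the correct idea, but it must be promoted to the main argument via the reduced equation and the residue computation --- the self-similar profile is of no help here, since the phase picked up by $(T-t)^{-1}$ under $e^{+i\pi}$ versus $e^{-i\pi}$ is $-1$ in both cases and distinguishes nothing.
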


We furthermore show that the branch-type of the complex-time continuation is related to the quotient of the eigenvalues of the eigenfunctions to which the heteroclinic orbit is tangent at $ u_+ $ resp at $ u \equiv 0 $. 

There have already been previous attempts to the question of analytic time continuations, mainly \citep{guo12} and \citep{masuda82}, \citep{masuda84}.

Masuda \citep{masuda82}, \citep{masuda84} has already proven the following results for the quadratic nonlinear heat equation with Neumann boundary conditions

Define the constant $ a $ as follows

\[
a := \frac{1}{2} \int_{-1}^1 u_0 \of{x} dx.
\]

\begin{theorem} \label{thm:thm_masuda1}
Let $u_0 $ be a non-negative function ( $ u_0 \neq 0 $ ) in $W^2_p \of{\Omega} $, $p > n$ , and set $a = P u_0$. If $ \norm{\partial_x^2 u_0}_p / \abs{a}^2 $ is sufficiently small, then there exists a unique solution $u_j$
( $j = 1, 2$ ) of the quadratic nonlinear heat equation with Neumann boundary conditions which is analytic in $t \in D_j$  as a $W^2_p \of{\Omega} $-valued function and converges to $u_0$ as $t \to 0$, $ \abs{\arg t} < \theta$ , in the norm of $W^2_p \of{\Omega} $.
\end{theorem}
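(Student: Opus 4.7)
The plan is to exploit the fact that, to leading order, the blow-up of $u$ is entirely captured by the ODE satisfied by the spatial average, whose singularity is a simple pole in complex time; the non-constant remainder can then be controlled as a perturbation along complex contours avoiding that pole.

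First I would decompose $u(t,x) = a(t) + v(t,x)$ with $a = Pu$ the average and $v = (I-P)u$ having zero spatial mean. Since $P$ commutes with the Neumann Laplacian, applying $P$ and $I-P$ to the equation yields the coupled system
\begin{align*}
\dot a &= a^2 + P(v^2),\\
v_t &= v_{xx} + 2av + (I-P)(v^2).
\end{align*}
The unperturbed scalar ODE $\dot a_0 = a_0^2$ has the meromorphic solution $a_0(t) = a/(1-at)$, entire apart from a single simple pole at $t = 1/a$. The two domains $D_1,D_2$ of the theorem correspond to continuation around this pole from above and from below on two distinct sheets; both contain a sector $\{|\arg t|<\theta\}$ at $t=0$.

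Next I would desingularize by rescaling: introduce $\tau = -\log(1-at)$ (with branch chosen appropriately on $D_j$) and $\tilde v(\tau,x) := v(t(\tau),x)/a(t(\tau))$. In these coordinates the $v$-equation becomes, schematically,
\begin{equation*}
\tilde v_\tau = e^{-\tau}\tilde v_{xx} + \tilde v + (I-P)(\tilde v^2) + \text{lower order},
\end{equation*}
posed on the zero-mean subspace, where $\partial_x^2$ has spectrum bounded away from $0$. Hence $e^{s\partial_x^2}(I-P)$ extends analytically in $s$ to a half-plane with $L^p\to W^2_p$ smoothing of order $|s|^{-1}$ and exponential decay in $\Re s$. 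I would then close a Picard iteration for the Duhamel integral along complex contours in $D_j$, in the Banach space of analytic $W^2_p$-valued functions equipped with the weighted norm $\sup_{t\in D_j}|a(t)|^{-1}\|v(t)\|_{W^2_p}$. A small ball in this norm is invariant and the map is contracting provided $\|v_0\|_{W^2_p}/|a|$ is small, which follows from the hypothesis $\|\partial_x^2 u_0\|_p/|a|^2 \ll 1$ via the Poincaré inequality on the zero-mean subspace together with the Sobolev embedding $W^2_p \hookrightarrow L^\infty$ for $p>n$.

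The main obstacle is the feedback of $v$ into the $a$-equation: the correction $P(v^2)$ can a priori shift the pole off the real axis and deform the sheets on which we want to continue. One must therefore run a simultaneous bootstrap showing that, on $D_j$, the perturbed $a(t)$ stays uniformly close to $a/(1-at)$, so that $|a(t)|$ remains comparable to its leading-order size along the Duhamel contours and the contraction estimates remain valid. Uniqueness of $u_j$ in $D_j$ and convergence $u_j(t) \to u_0$ as $t \to 0$ with $|\arg t| < \theta$ then follow from a standard Gronwall argument applied to the difference of two solutions along a contour in $D_j$, combined with the sectorial continuity of the heat semigroup at $t=0$.
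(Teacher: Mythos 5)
The paper does not actually prove this statement: it is quoted as Masuda's theorem \citep{masuda82}, \citep{masuda84}, and the only indication of method the paper gives is the remark that the proofs of Masuda ``rely on the implicit function theorem with respect to the explicit solution to spatially constant initial conditions.'' Your proposal is built on exactly that germ --- perturbing off the meromorphic spatially constant solution $a/(1-at)$, whose single simple pole at $t=1/a$ is what the two sheets $D_1$, $D_2$ wind around --- so in spirit you have reconstructed the route the paper attributes to Masuda, with the implicit function theorem replaced by an essentially equivalent Picard/contraction argument in a weighted space. That substitution is harmless, and your decomposition $u = a(t)+v$ with the coupled system for $(a,v)$ is the right starting point.

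The soft spot is the choice of weight. The linear part of the $v$-equation is $v_t = v_{xx} + 2a(t)v$, and the homogeneous solution of $\dot v = 2a_0(t)v$ grows like $a_0(t)^2$, one power of $a_0$ faster than the weight $|a(t)|^{-1}$ you propose; this is visible in your own rescaled equation through the term $+\tilde v$, which makes $\tilde v$ grow like $e^{\tau}=|1-at|^{-1}$. Invariance of a small ball in the norm $\sup_{D_j}|a(t)|^{-1}\|v(t)\|_{W^2_p}$ is therefore not automatic: it holds only because the domains $D_j$ keep a distance of order $1/|a|$ from the pole, so that $|1-at|^{-1}$ stays bounded on $D_j$, and the accumulated amplification is controlled precisely when $\|v_0\|_{W^2_p}$ is small against $|a|^2$ rather than against $|a|$. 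This is where the specific normalization $\norm{\partial_x^2 u_0}_p/\abs{a}^2$ in the hypothesis must enter, and your sketch should make that dependence explicit --- either by working with the weight $\abs{a}\,\abs{a_0(t)}^{-2}$, or by quantifying the geometry of $D_j$ relative to $1/a$ --- before the contraction constant can honestly be claimed to be less than one. The same quantification is needed to make your closing bootstrap (keeping the perturbed pole of $a(t)$ inside the excluded region) more than a statement of intent.
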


This result shows the existence of analytic continuations for solutions with almost constant initial data in the regions $ D_{1,2} $. Note, that $ D_1 = \bar{D}_2 $, see Figure \ref{fig:continuation_masuda}.

\begin{figure}[h]
\centering
\begin{tikzpicture}

\draw[->] (0,-2) -- (0,2) node[right] {$\Im \of{ t}$};
\draw[->] (-1,0) -- (5,0) node[below] {$\Re \of{t}$};

\draw (2,2) -- (0,0) -- (0.5,-0.5) -- (1.5,0.5) -- (3,-1);

\draw[fill] (1.5,0) circle (0.02cm) node[below] {$t_0$};

\draw (3,1) node {$D_1$};

\end{tikzpicture}
\caption{Existence of solution in complex time} \label{fig:continuation_masuda}
\end{figure}
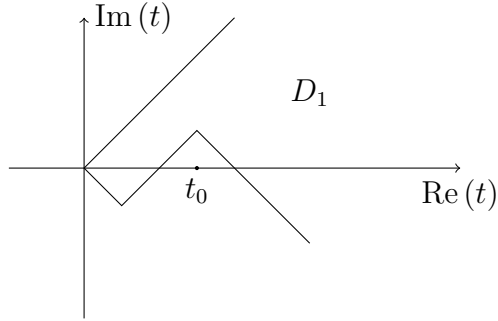

\begin{theorem}
Let $u_j$ , $j=1, 2$ be as in Theorem \ref{thm:thm_masuda1}. If $ \norm{\partial_x^2 u_0}_p / \abs{a}^2 $ is sufficiently small and $u_1 \of{t, \cdot} = u_2 \of{t, \cdot} $ for some $ t \in D_1 \cap D_2 $ for $ \Re \of{t} > t_0 + \delta $, then is $u_0 $  a constant function.
\end{theorem}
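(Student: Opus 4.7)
The plan is to use the identity principle to patch $u_1$ and $u_2$ into a single holomorphic $W^2_p$-valued function on $D_1 \cup D_2$, to observe that this union is a punctured neighborhood of the real blow-up time $t_0$, and then to show that the only Laurent expansion of a solution of $u_t = u_{xx} + u^2$ at such an isolated singularity is the spatially constant self-similar profile $1/(t_0-t)$.

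First I would apply the identity principle to the two analytic $W^2_p$-valued maps $u_1, u_2$. Since they agree at some $t^\star \in D_1 \cap D_2$ with $\Re(t^\star) > t_0 + \delta$, they must coincide throughout the connected component of $D_1 \cap D_2$ containing $t^\star$. From the geometry of the conjugate domains $D_1$ and $D_2 = \overline{D_1}$ depicted in Figure \ref{fig:continuation_masuda}, this component reaches both half-planes and an open interval of the real axis past $t_0$, so gluing $u_1$ and $u_2$ yields one analytic function $u : D_1 \cup D_2 \to W^2_p$ whose only possible singularity on the real axis is the isolated point $t_0$.

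Next I would classify that singularity. It cannot be removable, since the original real-time solution blows up as $t \to t_0^-$. To exclude an essential singularity I would examine the spatial average $a(t) = \tfrac12 \int_{-1}^{1} u(t,x)\,dx$, which is analytic on $D_1 \cup D_2$ and, via the Neumann condition, satisfies $a'(t) = \tfrac12 \int_{-1}^{1} u(t,x)^2\,dx$. On the real sub-interval $(0,t_0)$ Cauchy--Schwarz gives $a'(t) \geq a(t)^2$ with equality if and only if $u(t,\cdot)$ is spatially constant, and combined with the smallness hypothesis of Theorem \ref{thm:thm_masuda1} this controls $\norm{u(t,\cdot)}_{W^2_p}$ by a constant times $1/\abs{t-t_0}$ as $t \to t_0$, which is enough to force a pole of order one.

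Finally I would perform a Laurent-coefficient matching. Write $u(t,x) = v_{-1}(x)/(t_0-t) + w(t,x)$ with $w$ holomorphic at $t_0$; the $(t_0-t)^{-2}$ balance in $u_t = u_{xx} + u^2$ gives $v_{-1} = v_{-1}^{\,2}$, and by continuity of $v_{-1}$ (since $W^2_p \hookrightarrow C^0$) together with the genuine pole we obtain $v_{-1} \equiv 1$, i.e.\ the leading singular part of $u$ is already \emph{spatially constant}. Expanding $w(t,x) = \sum_{n \geq 0} w_n(x)(t-t_0)^n$ in the shifted equation
\[
w_t \;=\; w_{xx} + \frac{2\,w}{t_0 - t} + w^2,
\]
the order $(t-t_0)^{-1}$ balance forces $w_0 \equiv 0$, and the remaining orders yield the recursion
\[
(n+2)\, w_n \;=\; w_{n-1}'' + \sum_{k=0}^{n-1} w_k\, w_{n-1-k}, \qquad n \geq 1,
\]
whose right-hand side vanishes whenever $w_0 = \dots = w_{n-1} = 0$. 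An immediate induction then gives $w_n \equiv 0$ for every $n$, so $u(t,x) \equiv 1/(t_0-t)$ on $D_1 \cup D_2$. Evaluating at $t = 0$ yields $u_0(x) \equiv 1/t_0$, a constant.

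The main obstacle I expect is the classification step: excluding an essential singularity at $t_0$ requires careful quantitative control of $\norm{u(t,\cdot)}_{W^2_p}$ as $t\to t_0$ within $D_1 \cup D_2$, going beyond the real-line Cauchy--Schwarz estimate to complex neighborhoods; once the pole structure is in hand, the coefficient chase is essentially automatic. A secondary, mostly geometric, point is to verify that the relevant connected component of $D_1 \cap D_2$ really reaches a full punctured disk around $t_0$, which should be a direct consequence of the shape of the domains produced by Theorem \ref{thm:thm_masuda1}.
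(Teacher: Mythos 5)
The paper does not actually prove this statement: it is quoted from Masuda \citep{masuda82}, \citep{masuda84}, and the only thing the paper records about the proof is that it proceeds via the implicit function theorem around the explicit spatially constant solution $1/(a^{-1}-t)$. Your route --- glue, classify the singularity, match Laurent coefficients --- is therefore genuinely different, but it has two gaps that are not repairable as written. The most serious one concerns the domain of the glued function. Leaving aside that the identity principle needs agreement on a set with a limit point (fixable: forward uniqueness of the initial value problem at $t^\star$ upgrades the single-point agreement to agreement on a real segment), the glued function is analytic only on $D_1 \cup D_2$, and near $t_0$ this set is the complement of a fixed neighbourhood of the real segment through $t_0$: as Figure~\ref{fig:continuation_masuda} indicates, $D_1$ passes over $t_0$ at a definite positive height, $D_2 = \bar{D}_1$ passes under it, and the real points in $\left(t_0, t_0+\delta\right)$ belong to neither. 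It is \emph{not} a punctured disk about $t_0$. Hence ``the only possible singularity on the real axis is the isolated point $t_0$'' is not established, there is no Laurent expansion centred at $t_0$ with arbitrarily small inner radius, and the removable/pole/essential trichotomy is unavailable. Showing that the singularity is isolated is essentially equivalent to what the theorem asserts, so this step begs the question.

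The second gap is the exclusion of an essential singularity. With Neumann conditions one indeed has $a'(t) = \tfrac12 \int_{-1}^{1} u^2\,dx \geq a(t)^2$, but this is a \emph{lower} bound on the blow-up rate of the average; it shows the singularity is not removable, and it does not control $\norm{u(t,\cdot)}_{W^2_p}$ from above by $C/\abs{t-t_0}$, let alone uniformly on complex approach to $t_0$. Upper blow-up rate estimates are the hard part of the theory --- the paper itself notes, just before Lemma~\ref{lem:upper_blow_rate_estimate}, that such estimates do not transfer easily from the real to the complex setting. Once a simple pole with residue profile $v_{-1}$ is granted, your computation $v_{-1}=v_{-1}^2$, the conclusion $v_{-1}\equiv 1$, and the recursion $(n+2)w_n = w_{n-1}'' + \sum_{k=0}^{n-1} w_k w_{n-1-k}$ forcing $w\equiv 0$ are all correct; the difficulty lies entirely in reaching that point. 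Masuda's perturbative argument around the explicit constant solution sidesteps both obstacles, which is presumably why he takes that route.
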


The proofs of Masuda rely on the implicit function theorem with respect to the explicit solution to spatially constant initial conditions.\\
Also \citep{guo12} has tackled the problem of the quadratic nonlinear heat equation. They considered the equation for real and imaginary part of $ u = v + i w $ separately, but for real time only

\begin{align} \label{eq:quadratic_system}
\begin{pmatrix} v_t \\ w_t \end{pmatrix}
 = \begin{pmatrix} v_{xx} \\ w_{xx} \end{pmatrix}
  + \begin{pmatrix} v^2 - w^2 \\
2 v w \end{pmatrix}, \qquad  x \in \R.
\end{align}

They observed that the image of the real time flow $ t \in \R \mapsto \eta \of{t, v_0, w_0} $ of the reaction term 

\begin{align} \label{eq:quadratic_system_rd}
\begin{pmatrix} v_t \\ w_t \end{pmatrix}
 =  \begin{pmatrix} v^2 - w^2 \\
2 v w \end{pmatrix},
\end{align}

is a circle in $ \R^2 $ if $ w_0 \neq 0 $. If the image of the ODE flow of the reaction term in a system of reaction-diffusion equations is a convex subset of $ \R^2 $ and the diffusion parts do not couple, then the system possesses a maximum principle \citep{weinberger75}, \citep{evans10}:

If the initial and the boundary conditions of system \eqref{eq:quadratic_system} is contained in the interior of a solution to \eqref{eq:quadratic_system_rd}, then is the solution of \eqref{eq:quadratic_system} contained in the interior for all positive times. \\

Note, that the maximum principle may not hold anymore if the diffusion parts start to couple. This happens if one rotates the time axis into the complex plane, e.g. $ t \mapsto e^{i \theta} t $, which yields the equation 

\begin{align*} 
\begin{pmatrix} v_t \\ w_t \end{pmatrix}
 = \begin{pmatrix}
 \cos \theta & -\sin \theta \\
 \sin \theta & \cos \theta
 \end{pmatrix} \left[ \begin{pmatrix} v_{xx} \\ w_{xx} \end{pmatrix}  + \begin{pmatrix} u^2 - v^2 \\  2 v w\end{pmatrix} \right].
\end{align*}

But if we solve \eqref{eq:nonlinear_heat_eq_intro} along complex time paths that are parallel to the real time axis ($ \theta = 0$), we can always rewrite the equation as a system of real and imaginary part of the form \eqref{eq:quadratic_system}. \\

Using the maximum principle, \citep{guo12} also proved convergence to zero if the initial values $ v_0 \of{x} $, $ w_0 \of{x} $ satisfy 

\[
v_0 \of{x} - B w_0 \of{x} < 0, \qquad \forall x \in \R,
\]
and some $ B \in \R $.

We will use a very similar result to show existence and convergence of complex time analytically continued solutions for times with large real part.

Throughout the chapter we need the following definitions.

%\begin{definition}[Sector] \label{def:sector}
%We denote by $ S_{r, \alpha} $ sectorial subsets of the complex plane, that is
%\[
%S_{r, \alpha} := \left \{ s e^{i \varphi} \in \C, \; 0 < s < r, \; \abs{\varphi} < \alpha \right \}.
%\]
%Furthermore the unbounded sector $S_{\infty, \alpha} $ is abbreviated by $S_{\alpha}$.
%\end{definition}

\begin{definition}[Strip] \label{def:strip}
We denote by $\Strip_{r_1, r_2, \delta} $ rectangular subsets of the complex plane, that is
\[
\Strip_{r_1, r_2, \delta} := \left \{ x + i y \in \C, \; r_1 < x < r_2, \; \abs{y} < \delta \right \}.
\]
The strip $\Strip_{0, r_2, \delta}$  is abbreviated by $\Strip_{r_2, \delta}$ and the unbounded half-strip $ \Strip_{0, \infty, \delta}$ by $ \Strip_\delta $.
\end{definition}

\begin{definition}[Time $p$--path] \label{def:time-p-path}
 We define a path $ \gamma : I \to \C, t \mapsto t + i \tilde \delta $ for some fixed $ \tilde \delta $ and some interval $I$ as time parallel-path or time $p$--path, since the line $ \gamma $ is parallel to the real axis.
\end{definition}

\newpage

\section{The unstable manifold theorem and its complex consequences} \label{sec:unstable_manifold_boundary_het_blow_up}

We first prove a connection between blow-up solutions and its counterpart, in particular heteroclinic orbits in complex time. Complex time and analyticity of the solutions allow to connect these solutions, because of strong compactness properties and limit theorems for analytic functions, which we will state for convenience.

\begin{theorem}[Vitali theorem, \citep{Arendt_Vector_valued_laplace_transform}] \label{thm:vitali_convergence_thm_c}
Let $ X $ be a Banach space and the family $ f_n : \Omega \to X $ holomorphic and bounded on compact subsets of $ \Omega $, $ \Omega $ open and connected. Assume that the set
\[
\Omega_0 := \left \{ z \in \Omega: \lim_{n \to \infty} f_n \of{z} \textrm{exists} \right \},
\]
has a limit point in $\Omega $. Then there exists a holomorphic function $ f : \Omega \to X $ such that
\[
 f^{(k)} \of{z} = \lim_{n \to \infty}  f^{(k)}_n \of{z}
\] 
uniformly on compact subsets.
\end{theorem}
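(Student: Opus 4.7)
The plan is to adapt the scalar Vitali argument via Taylor expansion and an induction on coefficients, since in an infinite-dimensional Banach space $X$ the usual Montel normal-family shortcut is unavailable. First I would apply the Cauchy integral formula on compactly contained disks $\overline{D}\of{z_0, 2r} \subset \Omega$: the uniform bound $\norm{f_n} \leq M$ on such a disk forces $\norm{f_n^{(k)}\of{z_0}} \leq k!\, M/(2r)^k$ uniformly in $n$, so every derivative family is locally uniformly bounded and $(f_n)$ is locally uniformly Lipschitz.

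Next, fix $z_\infty \in \Omega$ a limit point of $\Omega_0$ and pick a sequence $(z_m) \subset \Omega_0$ with $z_m \to z_\infty$ and $z_m \neq z_\infty$. On a disk $D\of{z_\infty, 2r} \subset \Omega$ expand
\[
f_n(z) = \sum_{k \geq 0} a_{n,k}\of{z - z_\infty}^k, \qquad \norm{a_{n,k}} \leq M/(2r)^k,
\]
and prove by induction on $k$ that each coefficient sequence $(a_{n,k})_n$ is norm Cauchy in $X$. The key identity
\[
a_{n,k} + R_{n,k}(z_m) = \frac{f_n(z_m) - \sum_{j < k} a_{n,j}(z_m - z_\infty)^j}{(z_m - z_\infty)^k},
\]
with tail $R_{n,k}(z_m) = \sum_{\ell \geq 1} a_{n,k+\ell}(z_m - z_\infty)^\ell$ satisfying $\norm{R_{n,k}(z_m)} \to 0$ as $m \to \infty$ uniformly in $n$, drives the induction: for fixed $m$ the right-hand side is norm Cauchy in $n$ (because $z_m \in \Omega_0$ and the inductive hypothesis controls the lower-order terms), so for any $\epsilon > 0$ we first choose $m$ large enough that $\norm{R_{n,k}(z_m)} < \epsilon$ uniformly in $n$ and then send $n, n' \to \infty$ to conclude $\limsup \norm{a_{n,k} - a_{n',k}} \leq 2\epsilon$.

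Once every coefficient $a_{n,k}$ converges, the geometric tail estimate yields uniform convergence of $f_n$ on $D\of{z_\infty, r}$ to a holomorphic limit $f$. To globalize, let $W$ be the set of $z \in \Omega$ possessing a neighborhood on which $f_n$ converges uniformly; $W$ is open by definition and contains $z_\infty$. For any $z \in \partial W \cap \Omega$, arbitrarily close to $z$ there are points of uniform convergence, so $z$ is itself a limit point of the norm-convergence set, and the Taylor argument applied at $z$ places a whole neighborhood of $z$ in $W$. Hence $W$ is also closed in $\Omega$, and connectedness yields $W = \Omega$. Covering a compact $K \subset \Omega$ by finitely many such neighborhoods gives uniform convergence on $K$, and differentiating via the Cauchy integral formula transports this to all derivatives $f_n^{(k)}$. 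The main obstacle is the induction step: in infinite dimensions one cannot shortcut through Montel and the identity theorem, but must instead isolate $a_{n,k}$ algebraically by dividing out $(z_m - z_\infty)^k$, exploiting crucially that each $z_m$ is a genuine norm-convergence point rather than merely a point of weak convergence.
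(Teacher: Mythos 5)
The paper does not prove this statement at all: it is quoted as a background result with a citation to Arendt et al., so there is no in-paper argument to compare against. Your proof is correct and is essentially the standard Banach-valued Vitali argument found in that reference --- Cauchy estimates give uniform control of the Taylor coefficients $a_{n,k}$ at a limit point $z_\infty$ of $\Omega_0$, the division-by-$(z_m - z_\infty)^k$ induction shows each coefficient sequence is norm Cauchy (correctly using only the points $z_m \in \Omega_0$, since $z_\infty$ itself need not belong to $\Omega_0$), the geometric tail bound upgrades this to uniform convergence on a half-radius disk, and the open-closed argument in the connected set $\Omega$ together with Cauchy's formula for the derivatives finishes the globalization. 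No gaps.
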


We will apply Vitali's theorem to the flow of differential equations of which we can describe the $ \omega $-limit set, i.e. the set $ \Omega_0 $. We will first introduce the functional analytic setting.

\begin{equation} \label{eq:canonical_equation}
u_t = Au + f \of{u},
\end{equation}

with $ f \of{0} = 0 $ and $ Df \of{0} = 0$. 
\begin{enumerate}[label=(\roman*)]
\item Consider the complex separable Banach spaces $ Z $, $ Y $ and $ X $ such that the embeddings $ Z \hookrightarrow Y \hookrightarrow X $ are continuous.
\item $A : Z \to X $ is a bounded operator. 
\item $ A $ is a sectorial operator with $ \abs{ \Re \of{ \sigma \of{A}}} \geq \delta > 0 $.
\item $ f \in C^\omega \of{Z, Y} $ and $ f \of{\R} \subset \R$.
\item The operator $ A $ has a compact resolvent.
\end{enumerate}

As canonical example one should think of the quadratic nonlinear heat equation with $ X = L^2 \of{I, \C} $, $ Y = H_0^1 \of{I, \C} $ and $ Z = H^2 \of{I, \C} \cap H^1_0 \of{I, \C}$, $ I = \left(-1, 1 \right) $.

We also assume that there exists a second real-valued  equilibrium $ u_+ $ of the equation \eqref{eq:canonical_equation} whose linearization $A_+ := A + Df \of{u_+} $ has similar properties to $ A$.

The above conditions imply the existence of stable and unstable manifolds around $ u \equiv 0 $ and $ u_+ $, see e.g. \citep{lunardi95}, \citep{henry93} and \citep{vanderbauwhede2}. Since the resolvent of $ A$ is compact, the spectrum is discrete and we can construct strong unstable manifolds. Throughout this chapter, we make the further assumption that the largest eigenvalue of $ A_+$ is algebraically simple with eigenvalue $ \mu > 0 $ and eigenfunction $ \varphi_\mu $. This implies, that the strong unstable manifold associated to $ \mu $ is one-dimensional. We will denote the associated spectral projection $ P_+ $ and $ P_- = \id - P_+ $.

Also by \citep{henry93} we know that the strong unstable manifold is actually analytic, that is the graph of the real strong unstable manifold $ \Upsilon : \left(-a, a \right) \to Z_- $ is an analytic function which can be extended to a complex neighborhood of zero. Here $ Z_- := P_- Z$. The flow on the strong unstable manifold is a one-dimensional analytic differential equation

\[
\dot q = \mu q + P_+ f \of{q \varphi_\mu + \Upsilon \of{q}},
\]

Instead of $ f \of{q \varphi_\mu + \Upsilon \of{q}} $ we will also write $f \of{q,  \Upsilon \of{q}}$.

The solution semigroup of ordinary differential equations can be extended to a group in complex time. This will be important together with the fact that the ordinary differential equation depends analytically on the initial data on the unstable manifold, see for example \citep{hille97}. \\

We begin with a very simple Lemma about the connection of unbounded and bounded solutions for analytic systems. It is a direct corollary to Vitali's theorem \ref{thm:vitali_convergence_thm_c}.

\begin{lemma} \label{lem:unbounded_solutions_omega_limit}
Consider a map $ \Phi : \R_+ \times U \subset \C \to Z $ with the following properties
\begin{enumerate}[label=(\roman*)]
\item $ \Phi \of{t, 0} = 0 $, $ t > 0 $,
\item $ \Phi \of{t, \cdot} \in C^\omega \of{U, Z} $ for $ t > 0 $.
\item $ \Phi \of{t, \cdot} $ is uniformly bounded on compact subsets of $ U $.
\end{enumerate}
$ U \subset \C $ is an open and connected neighbourhood of zero. Assume that there exists an element $ q_0 \in U $ and a sequence of pairwise different $ q_m \in U $ with $ \lim_{m \to \infty} q_m = q_0 \in U $ such that
\[
\lim_{t \to \infty} \Phi \of{t,q_m} \to p^*, \qquad m \in \N \cup \left \{ 0 \right \}, \; \qquad p^* \in Z.
\]
Then for all $ q \in U $ holds,
\[
\lim_{t \to \infty} \Phi \of{t,q} = p^*, \qquad q \in U.
\]
and in particular $ p^* = 0$.
\end{lemma}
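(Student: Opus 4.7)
The plan is to apply Vitali's theorem \ref{thm:vitali_convergence_thm_c} directly to the time-$t$ maps. Fix any sequence $t_n \to \infty$ and define $f_n : U \to Z$ by $f_n(q) := \Phi(t_n, q)$. By hypothesis (ii) each $f_n$ is holomorphic on the open connected set $U$, and by hypothesis (iii) the family $\{f_n\}$ is uniformly bounded on compact subsets of $U$. On the countable set $\Omega_0 := \{q_m : m \in \N \cup \{0\}\}$ we already know the pointwise limit $\lim_{n\to\infty} f_n(q_m) = \lim_{t\to\infty} \Phi(t, q_m) = p^*$ exists. Since the $q_m$ are pairwise different and $q_m \to q_0 \in U$, the set $\Omega_0$ has a limit point inside $U$, so Vitali's theorem produces a holomorphic $f : U \to Z$ with $f_n \to f$ locally uniformly.

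Next I would invoke the identity theorem for $Z$-valued holomorphic functions (reducible to the scalar case by testing against $\phi \in Z^*$): the limit $f$ coincides with the constant $p^*$ on $\Omega_0$, which has an accumulation point in the connected domain $U$, hence $f \equiv p^*$ on all of $U$. In particular $\lim_{n\to\infty}\Phi(t_n, q) = p^*$ for every $q \in U$ along the chosen sequence.

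To upgrade this to convergence in the continuous parameter $t \to \infty$, I would argue by contradiction: if for some $q \in U$ there exists a sequence $s_n \to \infty$ and $\varepsilon > 0$ with $\norm{\Phi(s_n, q) - p^*} > \varepsilon$, then running the same Vitali/identity-theorem argument on $g_n(q') := \Phi(s_n, q')$ forces $g_n(q) \to p^*$, contradicting the choice of $s_n$. Thus $\lim_{t\to\infty}\Phi(t, q) = p^*$ for every $q \in U$. Finally, since $0 \in U$ (because $U$ is a neighbourhood of zero) and $\Phi(t, 0) = 0$ for all $t > 0$ by (i), evaluating the limit at $q = 0$ gives $p^* = 0$.

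The main thing to be careful about is verifying the hypotheses of Vitali's theorem in the Banach-space-valued setting, but hypothesis (iii) is precisely the local-boundedness condition needed, and holomorphy is assumed in (ii); no step requires more than the identity theorem and a standard subsequence argument, so I do not expect any real obstacle beyond bookkeeping.
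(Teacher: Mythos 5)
Your proof is correct and takes essentially the same route as the paper: fix an arbitrary sequence $t_n \nearrow \infty$, apply Vitali's theorem to the time-$t_n$ maps using the local boundedness and the accumulating set $\{q_m\}$, invoke the identity theorem to conclude the holomorphic limit is constantly $p^*$, and observe that the sequence was arbitrary. You are in fact slightly more explicit than the paper on two points it leaves implicit, namely the upgrade from sequential to continuous convergence and the deduction $p^* = 0$ from $\Phi\of{t,0} = 0$.
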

\begin{proof}
Consider any sequence $ t_n \nearrow \infty $ and define the family of analytic functions $ f_n $ defined as the time $t_n$ -- maps of the neighborhood $ U $, i.e. 
\[
f_n : U \to Z, \; \qquad f_n \of{u} := \Phi \of{t_n, q}.
\]
By assumption the set $ \Omega_0 $ defined as
\[
\Omega_0 := \left \{ z \in U: \lim_{n \to \infty} f_n \of{z} \textrm{ exists } \right \},
\]
non--empty and has a limit point $ q_0 \in U $. Thus by Vitali's theorem \ref{thm:vitali_convergence_thm_c} we know that the family $ f_n $ converges uniformly to an analytic function $ f : U \to Z $ for all $ \tilde q_0 \in U $. Note, that the limit exists for the full sequence and not just for a subsequence. Here it is important, that the claim does not follow from Montel compactness properties, but uses the analyticity of the functions. By assumption we have $ f \of{q_m} = p^* $ and the sequence $ q_m $ also has an interior limit point. We can conclude that $ f $ is constant by the identity theorem for analytic functions, that is $ f \equiv p^* $. Thus, the full sequence converges to $ p^*$
\[
\lim_{n \to \infty} \Phi \of{t_n, q} = p^*.
\]
Since the sequence $ t_n $ was arbitrary the claim holds for any sequence $ t_n \nearrow \infty $.
\end{proof}

\begin{remark}
Note, that this theorem does not follow from Montel compactness. We get the limit on the full sequence and not just a converging subsequence and furthermore, we do not require any compactness of the image of $ f_n $. The existence of the limit follows from assumptions on the regularity and give stronger convergence results.
\end{remark}

\begin{lemma} \label{lem:complex_time_grow_up_semiflow}
Consider a map $ \Phi : \R_+ \times U \subset \C^N \to Z $, with the following properties
\begin{enumerate}[label=(\roman*)]
\item $ \Phi \of{t, 0} = 0 $, $ t > 0 $,
\item For $ t > 0 $, $ \Phi \of{t, \cdot} \in C^\omega \of{U, K} $ where $ K $ is a compact subset of $Z$.
\end{enumerate}
$U$ is an open and connected neighborhood of zero. The equilibrium zero is assumed to have at least one unstable direction, that is there exists $ 0 \neq p_0 \in Z $ and $ q_n \in U $ converging to zero with $ \Phi \of{n, q_n} = p_0 $. Furthermore assume, that the flow is analytic with respect to the initial data as long as it stays bounded and has values in a compact subset $ K \subset Z $. Then  $ \Phi \of{t, U} $ can not stay uniformly bounded with $ t > 0 $.
\end{lemma}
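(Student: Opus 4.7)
The plan is to argue by contradiction, combining analytic dependence on initial data with a normal-family extraction in the spirit of Vitali's theorem (Theorem \ref{thm:vitali_convergence_thm_c}). The idea is that if $\Phi \of{t, U}$ were to remain in a fixed compact subset $K \subset Z$ for every $t > 0$, the time-$n$ slices would form a normal family of $Z$-valued holomorphic maps, and passing to a limit would force behavior incompatible with the existence of the unstable direction $\of{q_n, p_0}$.

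First, I would suppose for contradiction that $\bigcup_{t > 0} \Phi \of{t, U} \subset K$ with $K \subset Z$ compact. By hypothesis (ii) the time-$n$ maps $f_n \of{q} := \Phi \of{n, q}$ all lie in $C^\omega \of{U, K}$, so they form a uniformly bounded family of $Z$-valued holomorphic functions on $U$ with jointly precompact image. The Banach-space compactness argument underlying Vitali's theorem then yields a subsequence $f_{n_k}$ converging uniformly on compact subsets of $U$ to a holomorphic limit $f : U \to Z$. Hypothesis (i) gives $f_n \of{0} = 0$ for all $n$, hence $f \of{0} = 0$. The unstable-direction assumption supplies $q_n \to 0$ in $U$ with $\Phi \of{n, q_n} = p_0 \neq 0$; the set $\left\{ q_{n_k} \right\}_k \cup \left\{ 0 \right\}$ is compact and contained in $U$, so uniform convergence of $f_{n_k}$ on this set, combined with continuity of $f$ at $0$, yields $f_{n_k} \of{q_{n_k}} \to f \of{0} = 0$. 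But $f_{n_k} \of{q_{n_k}} = \Phi \of{n_k, q_{n_k}} = p_0 \neq 0$, a contradiction.

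The main obstacle is the normal-family step in the Banach-space setting. Mere uniform boundedness in the norm of $Z$ does not guarantee a strongly convergent subsequence when $Z$ is infinite-dimensional, so the compactness of the joint image encoded in hypothesis (ii) is essential. This is precisely where the compact-resolvent assumption on $A$ will enter in the applications: it forces bounded orbits to remain in a relatively compact subset of $Z$, thereby enabling the extraction. Once the normal-family step is secured, the diagonal passage $f_{n_k} \of{q_{n_k}} \to f \of{0}$ and the identification $f \of{0} = 0$ via hypothesis (i) are routine, and nothing beyond continuity of the analytic limit is needed to close the argument.
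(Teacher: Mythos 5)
Your proposal is correct and follows essentially the same route as the paper: contradiction, Montel-type normal-family extraction of a uniformly convergent subsequence of the time-$n$ maps (justified by the image lying in the compact set $K$), and the observation that $\Phi\of{n_k, q_{n_k}} = p_0$ must converge to the limit's value at $0$, which is $0$ by the fixed-point hypothesis. The paper merely writes out the final step as an explicit triangle-inequality estimate rather than invoking uniform convergence on the compact set $\left\{ q_{n_k} \right\}_k \cup \left\{ 0 \right\}$, which is the same argument.
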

\begin{proof}
The proof argues by contradiction. Assume that the time $n$--maps stay uniformly bounded, that is consider the (uniformly bounded) family
\[
f_n : U \to K \subset Z, \; \qquad f_n \of{u} := \Phi \of{n, u}.
\]
By Montel compactness theorem there exists a subsequence $ \tilde f_n := f_{\tilde{n} \of{n}} $ of $ f_n $ converging uniformly to an analytic function $ \tilde f $. Then we have for $ \tilde q_n := q_{\tilde{n} \of{n}} $

\[
\norm{p_0} = \norm{\tilde f_n \of{\tilde q_n}} \leq \norm{\tilde f_n \of{\tilde q_n} - \tilde f_m \of{\tilde q_n}} + \norm{\tilde f_m \of{\tilde q_n}}, \qquad m,n \in \N.
\]

For all $ \varepsilon > 0 $, since the convergence is uniform, we can choose, $ m $, $ n $ large enough such that

\[
\norm{\tilde f_n \of{\tilde q_n} - \tilde f_m \of{\tilde q_n}} < \varepsilon.
\]

Now choosing $ n $ large enough, we also have

\[
\norm{\tilde f_m \of{\tilde q_n}}  < \varepsilon.
\]

This implies $ \norm{p_0} < 2 \varepsilon $. Since the argument holds for any $ \varepsilon $ we can conclude $ p_0 = 0 $, which is a contradiction.
\end{proof}

\begin{remark}
The proof even shows that the functions $ \Phi \of{\cdot, U} $ can not stay bounded for any (complex) neighborhood $ \tilde U \subset U $ arbitrarily close to zero for all positive time $ t > 0 $.
\end{remark}

The two Lemmata can be connected the theory of stable and unstable manifolds in dynamicals systems. 

\begin{lemma} \label{lem:unstable_mf_unbounded}
Consider a semi flow $ \Phi : S_Z \subset \R_+ \times Z \to Z $ of the differential equation \eqref{eq:canonical_equation}. 
Assume that the real fast unstable manifold of $ u_+$ is one-dimensional and contains a heteroclinic orbit from $ u_+ $ to equilibrium $ u_- \equiv 0 $. Then solutions on a complex neighborhood in the fast unstable manifold of equilibrium $ u_+ $ can not stay uniformly bounded for real positive time.
\end{lemma}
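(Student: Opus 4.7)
The plan is to reduce this to a direct application of Lemma~\ref{lem:complex_time_grow_up_semiflow}, applied to the semiflow restricted to the complex extension of the one-dimensional fast unstable manifold of $u_+$, after shifting $u_+$ to the origin.

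First I would pass to the shifted variable $v := u - u_+$, so that $u_+$ becomes the zero equilibrium of the shifted equation. As recalled earlier in this section, the dynamics on the real fast unstable manifold reduce to the scalar analytic ODE
\[
\dot q = \mu q + P_+ f(q\varphi_\mu + \Upsilon(q)),
\]
and by~\citep{henry93} the graph $\Upsilon$ extends analytically to a complex neighborhood $U \subset \C$ of $q = 0$. Let
\[
\Psi(t, q) := \Phi(t, u_+ + q\varphi_\mu + \Upsilon(q)) - u_+
\]
denote the corresponding shifted semiflow in this chart. I would then verify the hypotheses of Lemma~\ref{lem:complex_time_grow_up_semiflow}: property (i) holds because $u_+$ is a real equilibrium, giving $\Psi(t, 0) = 0$; property (ii) holds because for every fixed $t > 0$ the map $\Psi(t, \cdot)$ is holomorphic as a composition of analytic maps, and on any relatively compact $U_0 \Subset U$ its image $\Psi(t, \overline{U_0}) \subset Z$ is compact by continuity.

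Next I would furnish the required unstable-direction witness from the real heteroclinic orbit $\gamma : \R \to Z$ from $u_+$ to $u_- \equiv 0$. Let $\tilde q : \R \to (-a,a)$ be the corresponding curve on the real 1D unstable manifold in the chart, so that $\tilde q(t) \to 0$ as $t \to -\infty$. Set $q_n := \tilde q(-n)$ and $p_0 := \gamma(0) - u_+ \neq 0$. Then $q_n \to 0$, so eventually $q_n \in U_0$, and the semiflow identity gives
\[
\Psi(n, q_n) = \Phi(n, \gamma(-n)) - u_+ = \gamma(0) - u_+ = p_0.
\]
Lemma~\ref{lem:complex_time_grow_up_semiflow} then yields that $\Psi(t, U_0)$ cannot stay uniformly bounded for real $t > 0$, which is exactly the claim after undoing the shift.

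The main obstacle is not the argument itself but the underlying analytic setup: one must invoke the analytic version of the unstable manifold theorem of~\citep{henry93} both to ensure that the chart $\Upsilon$ and the reduced scalar flow are genuinely holomorphic on a complex neighborhood of zero, and to guarantee that the backward orbit of the heteroclinic along the real unstable manifold is defined for all negative times so that the witness sequence $q_n$ makes sense and converges to the origin. Once this framework is in place the lemma follows as a direct plug-in to Lemma~\ref{lem:complex_time_grow_up_semiflow}.
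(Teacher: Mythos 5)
Your argument is correct, but it takes a genuinely different route from the paper. The paper argues through Lemma~\ref{lem:unbounded_solutions_omega_limit}: assuming uniform boundedness, it picks points $q_m$ on the real heteroclinic orbit accumulating inside the complex neighborhood $U$, uses their forward convergence to $u_-$ together with Vitali's theorem~\ref{thm:vitali_convergence_thm_c} and the identity theorem to conclude that the entire neighborhood would then converge to $u_-$, and gets the contradiction from the fact that the fixed point $u_+$ itself lies in $U$. You instead shift $u_+$ to the origin and plug into Lemma~\ref{lem:complex_time_grow_up_semiflow}, using only the \emph{backward} asymptotics of the heteroclinic to produce the witness sequence $q_n \to 0$ with $\Psi(n,q_n) = p_0 \neq 0$. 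Your route is in fact slightly more general --- it never uses that the orbit converges to $u_-$, only that the unstable manifold of $u_+$ carries a nontrivial orbit --- and the paper itself acknowledges this alternative in the remark following the lemma. The trade-off is the compactness hypothesis: Vitali's theorem needs only boundedness on compact subsets, whereas the Montel argument behind Lemma~\ref{lem:complex_time_grow_up_semiflow} needs the whole family $\{\Psi(n,\cdot)\}_{n}$ to take values in a \emph{single} compact set $K \subset Z$. Your justification (``$\Psi(t,\overline{U_0})$ is compact by continuity'') gives a compact image for each fixed $t$ separately, which is not the same thing; under the contradiction hypothesis of uniform boundedness the uniform $K$ should instead be extracted from the smoothing of the analytic semigroup together with the standing assumption that $A$ has compact resolvent. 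With that one step made explicit, your proof is complete.
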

\begin{proof}
The proof proceeds by contradiction.
\begin{enumerate}[label=(\roman*)]
\item Denote the analytic graph of the one--dimensional real fast unstable manifold by $ \Upsilon $. The real analytic graph can be extended to an analytic function on some complex neighborhood $ U$ of the equilibrium, see Figure \ref{fig:heteroclinic_foliation}.
\item Denote by $ f_n $ the time $n$ - map of the neighborhood $ U $, i.e. 
\[
f_n : U \to Z, \qquad f_n \of{q_0} = \Phi \of{n, (q_0, \Upsilon \of{q_0}}.
\]
where $ (q_0, \Upsilon \of{q_0}) $ is on the heteroclinic orbit. Assume that the family $ f_n $ stays uniformly bounded. \\
Take any sequence $ t_m \in \R $ with $ \abs{t_m} $ small enough and $ t_m \to \delta $. Define $ u_m \in \C $ by
\[
u_m := P_{+} \of{ \Phi \of{t_m, (u_0, \Upsilon \of{u_0})}},
\]
where $ P_{+} $ is the projection on the tangent space of the strong unstable manifold $ W^{su} \of{u_+} $ and  $ \delta $ small enough such that the projection is still contained in $ U$, i.e.
\[
P_{+} \of{ \Phi \of{t_m, (q_0, \Upsilon \of{q_0})}} \in U .
\]
The sequence of $ q_m $ converges to $ q^\delta_0 \in U $. By construction each $ (q_m, \Upsilon \of{q_m}) $ is on the heteroclinic orbit and it holds
\[
\lim_{t \to \infty} \Phi \of{t, q_m} = u_-.
\] 
Since we assumed that the flow stays uniformly bounded, we can apply Lemma \ref{lem:unbounded_solutions_omega_limit} to obtain
\[
\lim_{t \to \infty} \Phi \of{t, U} = u_-.
\]
This contradicts $ u_+ \in U $.
\end{enumerate}
\end{proof}

\begin{remark}
The Lemma shows that, a flow that depends analytically on the initial condition, can not change the limit point if the flow stays uniformly bounded. This implies, that there might be a kinship between grow-up, blow-up and the global attractor of differential equations in the complex domain.
\end{remark}

The Lemma required the mere existence of a real heteroclinic orbit. But we can similarly also prove that there exists grow-up or blow-up even if we do not have such heteroclinic orbit, but a finite-dimensional analytic unstable manifold by Lemma \ref{lem:complex_time_grow_up_semiflow}.

The theorem tells us that the equilibria of analytic systems are related to blow-up or grow-up of dynamical systems in the complexified domain. Next we show that the solution must actually possess blow-up and that the analytic time continuation of the blow-up orbit is also a real time heteroclinic orbit. Indeed, the one-dimensional fast unstable manifold is foliated by heteroclinic orbits and there exists a non-empty boundary to the foliation, which is a blow-up orbit.

\begin{lemma} \label{lem:set_of_heteros}
Consider the setting of Lemma \ref{lem:unstable_mf_unbounded}. Define the set $\tilde H$ as follows
\[
\tilde H := \left \{ u_0 \in U \subset \C, \; \sup_{t \in \R_+} \norm{\Phi \of{t, (u_0,\Upsilon \of{u_0}}}_Z < \infty \right \},
\]
and the set $ H $ as the connected component of $ \tilde H $ that contains $ U \cap \R_- $.
Then $ H $ is foliated by real time heteroclinic orbits with complex initial data and furthermore all real time heteroclinic orbits are time $p$ - path continuations of the real heteroclinic orbit.
\end{lemma}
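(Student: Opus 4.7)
The plan is to establish two things in turn: first, that every complex initial datum $q \in H$ produces a real-time orbit converging to $u_- \equiv 0$, so the leaves of the orbit equivalence relation foliate $H$ by heteroclinics; and second, that each such complex heteroclinic is obtained from the real heteroclinic by sliding the base point along an imaginary-time segment, i.e.\ along a time $p$--path.

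For the first part I would apply Lemma \ref{lem:unbounded_solutions_omega_limit} to the family of time-$t$ maps $q \mapsto \Phi(t,(q,\Upsilon(q)))$ on $H$. By definition of $\tilde H$ these maps are pointwise bounded on $H$; by the setup of Lemma \ref{lem:unstable_mf_unbounded} the real interval $U \cap \R_-$ lies in $H$ and consists of starting points of the real heteroclinic, whose orbits converge to $u_- \equiv 0$, so the set $\Omega_0$ of Vitali's lemma already contains a set with a limit point in $H$. What is not for free is the upgrade from pointwise boundedness to locally uniform boundedness on compact subsets of $H$, which is the hypothesis required by Lemma \ref{lem:unbounded_solutions_omega_limit}. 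I would accomplish this by a chain-of-balls argument: on a tubular complex neighborhood of the real heteroclinic, continuous analytic dependence on initial data together with the exponential contraction in the stable manifold of $u_-$ yields an a priori uniform bound; Lemma \ref{lem:unbounded_solutions_omega_limit} then upgrades this to convergence to $0$ on that neighborhood, which \emph{a fortiori} carries a uniform bound. Iterating along any curve connecting a target point in $H$ to the real heteroclinic propagates uniform boundedness through the whole connected component. Once that is in place, Lemma \ref{lem:unbounded_solutions_omega_limit} gives $\lim_{t\to\infty}\Phi(t,(q,\Upsilon(q)))=0$ for every $q \in H$, which is precisely the foliation of $H$ by real-time heteroclinic orbits.

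For the second part, let $H^{\ast}$ denote the set of $q \in H$ for which there exist $r_0 \in U \cap \R_-$ on the real heteroclinic and a real $\tilde\delta$ such that $(q,\Upsilon(q)) = \Phi(i\tilde\delta,(r_0,\Upsilon(r_0)))$. The strategy is a clopen argument on the connected set $H$. Non-emptiness is clear: $\tilde\delta=0$ gives $U\cap\R_-\subset H^{\ast}$. For openness, restrict to the one-dimensional complex unstable manifold where the reduced dynamics is the scalar holomorphic ODE $\dot q = \mu q + P_+ f(q,\Upsilon(q))$. Away from the fixed point $q = 0$ the vector field does not vanish, so by the Cauchy--Riemann equations the real-time and imaginary-time flows are transverse, and the map $(r_0,\tilde\delta)\mapsto \Phi(i\tilde\delta,(r_0,\Upsilon(r_0)))$ is a local diffeomorphism from a strip in $\R^2$ onto a neighborhood in $\C$ around each point of the real heteroclinic. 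Using the group property to push this neighborhood forward in real time covers all of $H^{\ast}$, so $H^{\ast}$ is open. Closedness of $H^{\ast}$ in $H$ follows from continuity of $\Phi$ together with Step~1, which rules out finite-time escape. Connectedness of $H$ then forces $H^{\ast}=H$, which is the desired statement.

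The main obstacle is the first step: upgrading pointwise boundedness of individual orbits, which is the raw content of $q \in \tilde H$, to the locally uniform boundedness on compact subsets of $H$ demanded by Vitali. Everything afterward is comparatively soft once attention is restricted to the one-dimensional complex unstable manifold where standard flow-box theory for holomorphic vector fields is available.
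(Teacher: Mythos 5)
Your overall architecture (apply Lemma \ref{lem:unbounded_solutions_omega_limit} after securing local uniform boundedness, then a separate connectedness argument for the time $p$--path claim) matches the paper, and you correctly identify the crux: upgrading the pointwise bound in the definition of $\tilde H$ to uniform boundedness on compact subsets. But your resolution of that crux does not close. The chain-of-balls iteration only ever establishes that the set $G \subset H$ of initial data whose orbits converge to $u_-$ is \emph{open}: starting from a tubular neighbourhood of the real heteroclinic you can always fatten $G$ a little, but to conclude $G = H$ by connectedness you also need $G$ to be closed in $H$, i.e.\ you must handle a limit point $q_0 \in H \cap \partial G$. At such a point you only know that the single orbit of $q_0$ is bounded (that is all that $q_0 \in \tilde H$ gives); you cannot invoke contraction at $u_-$ because you do not yet know this orbit approaches $u_-$, and you cannot invoke Vitali on a neighbourhood of $q_0$ because you have no uniform bound there. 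The curve argument therefore stalls exactly at the boundary of the region already conquered. (A secondary caveat: in the abstract setting of Lemma \ref{lem:unstable_mf_unbounded} the equilibrium $u_-=0$ is only hyperbolic, so ``exponential contraction in the stable manifold of $u_-$'' does not by itself bound orbits starting merely \emph{near} that stable manifold.)

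The missing idea, which is the heart of the paper's proof, is that a single bounded real-time orbit already forces a uniform bound on a whole neighbourhood, with no reference to $u_-$ at all. Since $\sup_{t \in \R_+}\norm{\Phi(t,(q_0,\Upsilon(q_0)))}_Z < \infty$, the orbit of $q_0$ extends analytically to a complex time strip $\abs{\Im(t)} \le \delta$. On the one-dimensional reduced equation, separation of variables gives $\abs{t(q,q_0)} \le 2\,\abs{(q-q_0)/(\mu q_0 + f(q_0,\Upsilon(q_0)))}$, so every $\tilde q_0$ in a small ball $B_r(q_0)$ is reached from $q_0$ in a complex time whose modulus (hence imaginary part) is at most $\delta$. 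By the complex flow property the real-time orbit of $\tilde q_0$ is then a complex-time translate of the orbit of $q_0$ inside the strip, hence bounded by the same constant. This shows directly that $H$ is open and locally uniformly bounded, after which a single application of Lemma \ref{lem:unbounded_solutions_omega_limit} on the connected open set $H$ (with $\Omega_0 \supset U \cap \R_-$) yields the foliation; no clopen bootstrap of the convergence itself is needed. Your second part (the flow-box argument for the time $p$--path claim) is in the right spirit and close to what the paper leaves implicit, but its closedness step inherits the same deficiency and should likewise be routed through this complex-time translation argument.
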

\begin{proof}
We proceed by the following steps.
\begin{enumerate}[label=(\roman*)]
\item Take any $ q_0 \in U $. Then there exists a $ r > 0 $ such that a small ball $ B_r \of{q_0} $ in the tangent space of the fast unstable manifold is filled by the complex time flow of $ q_0 $.
The proof argues via the complex one-dimensional equation on the fast unstable manifold
\begin{equation} \label{eq:reduced_equation_fast_unstable}
\dot q = \mu q + \tilde f \of{q, \Upsilon \of{q}}, \qquad q \of{0} = q_0.
\end{equation}
where $ \tilde f := P_{su} q  $.
For small enough $ q_0 $ we know that $ \mu q_0 + \tilde f \of{q_0, \Upsilon \of{u_0}} \neq 0 $ for $ q_0 \neq 0 $ and thus the separation of variables formula is well-defined
\begin{equation} \label{eq:reduced_equation_time}
t \of{q, q_0} = \int_{q_0}^{q} \frac{1}{\mu \tau + f \of{\tau, \Upsilon \of{\tau}}} d \tau,
\end{equation}
for any $ q \in  B_r \of{q_0} $. This also implies an explicit estimate on the time $ t \of{q,q_0} $ needed to go from $ q_0 $ to $ q $
\[
\abs{t \of{q, q_0}} \leq 2 \abs{\frac{q-q_0}{\mu q_0 + f \of{q_0, \Upsilon \of{q_0}}}}.
\]
\item The next step is to prove that the set $ H $ is open. Take any $ q_0 \in H $. Then by assumption there exists an $ M > 0 $ such that $\norm{\Phi \of{t, (q_0,\Upsilon \of{q_0}}}_Z < M $. The main problem is that the time $t$ is unbounded. Thus we can not simply take small perturbations of the initial condition and argue by continuous dependence of initial data of time $ t $ maps and let $t$ go to infinity. The idea to remedy that problem is that every initial condition $\tilde q_0 $ in the unstable manifold close to $q_0$ can be reached by solving the reduced equation for a small complex time $q_0$ given by \eqref{eq:reduced_equation_time}. First note, that since the real time flow of $ q_0 $ is uniformly bounded, there exists a $ \delta > 0 $ such that we can extend the solution analytically to complex time strip 

\[
S := \left \{ t \in \C, \abs{\Im \of{t}} \leq \delta \right \}.
\]

Take any $\tilde q_0 \in B_r \of{q_0} $ with $ r > 0 $. By the previous step we can choose an $ r > 0 $ small enough such that the neighborhood $ B_r \of{q_0} $ is obtained by the complex time flow of $ q_0 $ with imaginary part less than $ \delta $. Thus there exists a $ t_{\tilde q_0} := t \of{\tilde q_0, q_0} \in \C $ such that $ u = \Phi \of{t_{\tilde q_0},q_0} $ and $ \abs{\Im \of{t \of{\tilde q_0, q_0}}} \leq \delta $. Due to time analyticity, the complex flow property holds and we obtain
\[
\Phi \of{t,(\tilde q_0, \Upsilon \of{\tilde q_0})} = \Phi \of{t, \Phi \of{t_{\tilde q_0},(q_0, \Upsilon \of{q_0})}} = \Phi \of{t_{\tilde q_0}, \Phi \of{t,(q_0, \Upsilon \of{q_0})}}.
\]
This implies $ \tilde q_0 \in H $.
\item Since $ H $ is open we can apply Lemma \ref{lem:unbounded_solutions_omega_limit} to obtain that $H$ is foliated by real time heteroclinic orbits. Note that the backwards in time convergence is given by definition since $ H $ is a subset of the fast unstable manifold.
\item All in all we can conclude a foliation of the fast unstable manifold as indicated in Figure \ref{fig:heteroclinic_foliation}.
\end{enumerate}
\end{proof}

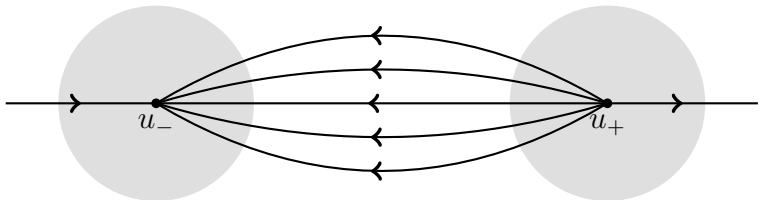
\begin{figure}[h] 
\centering
\begin{tikzpicture}

\draw[fill= lightgray, draw=none, opacity = 0.5] (6,0) circle [radius = 1.3];

\draw[fill= lightgray, draw=none, opacity = 0.5] (0,0) circle [radius = 1.3];

\draw [thick, domain=0:6,decoration={markings, mark=at position 0.5 with {\arrow[ultra thick]{<}}},
        postaction={decorate}] plot ({\x)}, {-0.05*\x*(\x-6)});

\draw [thick, domain=0:6,decoration={markings, mark=at position 0.5 with {\arrow[ultra thick]{<}}},
        postaction={decorate}] plot ({\x)}, {0.05*\x*(\x-6)});

\draw [thick, domain=0:6,decoration={markings, mark=at position 0.5 with {\arrow[ultra thick]{<}}},
        postaction={decorate}] plot ({\x)}, {0.1*\x*(\x-6)});

\draw [thick, domain=0:6,decoration={markings, mark=at position 0.5 with {\arrow[ultra thick]{<}}},
        postaction={decorate}] plot ({\x)}, {-0.1*\x*(\x-6)});

\draw [thick, domain=0:2,decoration={markings, mark=at position 0.5 with {\arrow[ultra thick]{>}}},
        postaction={decorate}] plot ({-2+\x}, {0});

\draw[fill, thick] (0,0) circle [radius=0.05] node[below, thick] {$u_-$};
\draw[fill, thick] (6,0) circle [radius=0.05] node [below, thick ] {$u_+$}; 

\draw [thick, domain=0:6,decoration={markings, mark=at position 0.53 with {\arrow[ultra thick]{>}}},
        postaction={decorate}] plot ({6-\x}, {0});

\draw [thick, domain=0:2,decoration={markings, mark=at position 0.5 with {\arrow[ultra thick]{>}}},
        postaction={decorate}] plot ({6+\x}, {0});

%\draw[fill= gray, draw=gray, opacity = 0.8] (0,-1.4) circle [radius=1.4];
%\draw[fill= gray, draw=gray, opacity = 0.8] (0,1.4) circle [radius=1.4];
%\draw[fill = gray, draw=gray, opacity = 1]  (0,0) to (1.4,1.4) to (1.4,-1.4) to (0,0);
%
%
%   \draw [fill=red,thick,domain=0:180,decoration={markings, mark=at position 0.25 with {\arrow{>}}},
%        postaction={decorate}] plot ({0.6*cos(\x)}, {0.6+0.6*sin(\x)});
%   \draw [fill=red,thick,domain=0:90] plot ({0.6+0.6*cos(\x)}, {0.6*sin(\x)});
%   \draw [fill=red,thick,domain=90:180] plot ({-0.6+0.6*cos(\x)}, {0.6*sin(\x)});
%   	\draw[fill = red, draw=red]  (1.2,0) -- (0.6,0.6) -- (-0.6,0.6)  -- (-1.2,0) -- (1.2,0);
%   
%      \draw [fill=blue,thick,domain=0:180,decoration={markings, mark=at position 0.25 with {\arrow{>}}},
%        postaction={decorate}] plot ({0.6*cos(\x)}, {-0.6-0.6*sin(\x)});
%   \draw [fill=blue,thick,domain=90:180] plot ({-0.6+0.6*cos(\x)}, {-0.6*sin(\x)});
% 
%   \draw [fill=blue,thick,domain=0:90] plot ({0.6+0.6*cos(\x)}, {-0.6*sin(\x)});
%   	\draw[fill = blue, draw=blue]  (1.2,0) -- (0.6,-0.6) -- (-0.6,-0.6)  -- (-1.2,0) -- (1.2,0);

\end{tikzpicture}
\caption{Heteroclinic foliation of the fast unstable manifold}  \label{fig:heteroclinic_foliation}
\end{figure}

The set of heteroclinic orbits $ H $ on the fast unstable manifold is open and we study its topological boundary. We show that the boundary is non-empty and that it is a blow-up orbit. The blow-up orbit has a complex time analytic continuation into the positive or negative complex half-plane and it is heteroclinic along time $p$ - paths. 

\begin{figure}[h]
\centering
\begin{subfigure}[b]{.5\textwidth}
\begin{tikzpicture}

\draw[fill= lightgray, draw=none, opacity = 0.5] (6,0) circle [radius = 1.3];

\draw[fill= lightgray, draw=none, opacity = 0.5] (0,0) circle [radius = 1.3];

\draw [thick, domain=0:6,decoration={markings, mark=at position 0.5 with {\arrow[ultra thick]{<}}},
        postaction={decorate}] plot ({\x)}, {-0.05*\x*(\x-6)});

\draw [thick, domain=0:6,decoration={markings, mark=at position 0.5 with {\arrow[ultra thick]{<}}},
        postaction={decorate}] plot ({\x)}, {0.05*\x*(\x-6)});

\draw [thick, domain=0:6,decoration={markings, mark=at position 0.5 with {\arrow[ultra thick]{<}}},
        postaction={decorate}] plot ({\x)}, {0.1*\x*(\x-6)});

\draw [thick, domain=0:6,decoration={markings, mark=at position 0.5 with {\arrow[ultra thick]{<}}},
        postaction={decorate}] plot ({\x)}, {-0.1*\x*(\x-6)});

\draw [color=red,very thick,domain=45:180,decoration={markings, mark=at position 0.5 with {\arrow[ultra thick]{<}}},
       postaction={decorate}] plot ({0.9*cos(\x) + 6}, {0.9*sin(\x)});

\draw [color=blue,very thick, domain=45:180,decoration={markings, mark=at position 0.5 with {\arrow[ultra thick]{<}}},
       postaction={decorate}] plot ({0.9*cos(\x) + 6}, {-0.9*sin(\x)});

\draw [thick, domain=0:2,decoration={markings, mark=at position 0.5 with {\arrow[ultra thick]{>}}},
        postaction={decorate}] plot ({-2+\x}, {0});

\draw[fill, thick] (0,0) circle [radius=0.05] node[below, thick] {$u_-$};
\draw[fill, thick] (6,0) circle [radius=0.05] node [below, thick ] {$u_+$}; 

\draw [thick, domain=0:6,decoration={markings, mark=at position 0.53 with {\arrow[ultra thick]{>}}},
        postaction={decorate}] plot ({6-\x}, {0});

\draw [thick, domain=0:2,decoration={markings, mark=at position 0.5 with {\arrow[ultra thick]{>}}},
        postaction={decorate}] plot ({6+\x}, {0});

%\draw[fill= gray, draw=gray, opacity = 0.8] (0,-1.4) circle [radius=1.4];
%\draw[fill= gray, draw=gray, opacity = 0.8] (0,1.4) circle [radius=1.4];
%\draw[fill = gray, draw=gray, opacity = 1]  (0,0) to (1.4,1.4) to (1.4,-1.4) to (0,0);
%
%
%   \draw [fill=red,thick,domain=0:180,decoration={markings, mark=at position 0.25 with {\arrow{>}}},
%        postaction={decorate}] plot ({0.6*cos(\x)}, {0.6+0.6*sin(\x)});
%   \draw [fill=red,thick,domain=0:90] plot ({0.6+0.6*cos(\x)}, {0.6*sin(\x)});
%   \draw [fill=red,thick,domain=90:180] plot ({-0.6+0.6*cos(\x)}, {0.6*sin(\x)});
%   	\draw[fill = red, draw=red]  (1.2,0) -- (0.6,0.6) -- (-0.6,0.6)  -- (-1.2,0) -- (1.2,0);
%   
%      \draw [fill=blue,thick,domain=0:180,decoration={markings, mark=at position 0.25 with {\arrow{>}}},
%        postaction={decorate}] plot ({0.6*cos(\x)}, {-0.6-0.6*sin(\x)});
%   \draw [fill=blue,thick,domain=90:180] plot ({-0.6+0.6*cos(\x)}, {-0.6*sin(\x)});
% 
%   \draw [fill=blue,thick,domain=0:90] plot ({0.6+0.6*cos(\x)}, {-0.6*sin(\x)});
%   	\draw[fill = blue, draw=blue]  (1.2,0) -- (0.6,-0.6) -- (-0.6,-0.6)  -- (-1.2,0) -- (1.2,0);

\end{tikzpicture}
\caption{Path in the unstable manifold}
\end{subfigure}
\begin{subfigure}[b]{.5\textwidth}
\centering
\begin{tikzpicture}

\draw[fill= gray, draw=white, opacity = 0.5] (-1,0) rectangle (5,2);

\draw[fill=white, draw=white, postaction={pattern=my north east lines,pattern color=gray}] (-1,2) rectangle (5,1);
\draw[fill=white, draw=white, postaction={pattern=my north west lines,pattern color=gray}] (-1,0) rectangle (5,1);

%\draw[postaction={draw,red,dash pattern= on 3pt off 5pt,dash phase=4pt,thick}]
%[blue,dash pattern= on 3pt off 5pt,thick] (-1,1) -- (5,1);
%
%\draw[postaction={draw,red,dash pattern= on 3pt off 5pt,dash phase=4pt,thick}]
%[blue,dash pattern= on 3pt off 5pt,thick] (3,1) -- (5,1);

%\draw[color=white] (1,1) --(3,1);

%\draw (0,1.5) node {$S_+$};
%\draw (0,0.5) node {$S_-$};

\draw[line width=0.08cm, draw=red, ->] (0.5,1) -- (0.5,1.8);
\draw[line width=0.08cm, draw=blue,->] (0.5,1) -- (0.5,0.2);

\draw[color=white] (1,0) --(3,0);
\draw[fill] (1,0) circle [radius=0.05cm] node[below] {$T$};
\draw[fill] (3,0) circle [radius=0.05cm] node[below] {$T_1$};

\draw[color=white] (1,2) --(3,2);
\draw[fill] (1,2) circle [radius=0.05cm] node[above] {$T$};
\draw[fill] (3,2) circle [radius=0.05cm] node[above] {$T_1$};

\draw[->] (-0.5,-0.5) -- (-0.5,2.5) node[scale = 0.7,left] {$\Im \of{t} $};

\draw[->] (0,1) -- (5.5,1) node[scale = 0.7,below] {$\Re \of{t} $};

\draw [thick, domain=0:6,decoration={markings, mark=at position 0.55 with {\arrow[ultra thick]{<}}},
        postaction={decorate}] plot ({5-\x}, {1});

\draw [thick, domain=0:6,decoration={markings, mark=at position 0.55 with {\arrow[ultra thick]{<}}},
        postaction={decorate}] plot ({5-\x}, {0.4});

\draw [thick, domain=0:6,decoration={markings, mark=at position 0.55 with {\arrow[ultra thick]{<}}},
        postaction={decorate}] plot ({5-\x}, {0.7});        

\draw [thick, domain=0:6,decoration={markings, mark=at position 0.55 with {\arrow[ultra thick]{<}}},
        postaction={decorate}] plot ({5-\x}, {1});

\draw [thick, domain=0:6,decoration={markings, mark=at position 0.55 with {\arrow[ultra thick]{<}}},
        postaction={decorate}] plot ({5-\x}, {1.6});

\draw [thick, domain=0:6,decoration={markings, mark=at position 0.55 with {\arrow[ultra thick]{<}}},
        postaction={decorate}] plot ({5-\x}, {1.3});

\end{tikzpicture}
\caption{Associated complex time path of real blow-up orbit} 
\end{subfigure}
\caption{Relation between real and imaginary time path in the unstable manifold} \label{fig:idea_of_proof}
\end{figure}
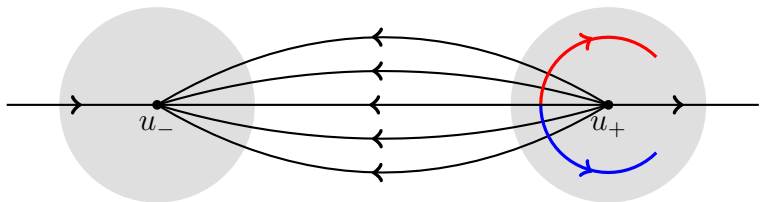
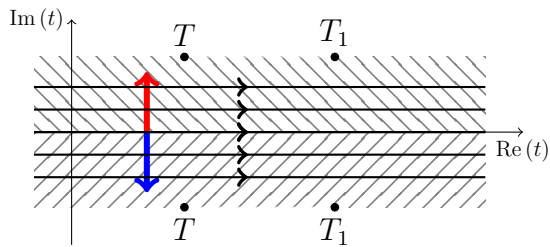

\begin{lemma} \label{lem:boundary_heteroclinic_blow_up}
The set $ \partial H \cap U$ is not empty. The boundary orbits consist of a complex conjugate pair of finite time blow-up orbits with time analytic continuation into the positive or negative complex plane. The time $p$ - paths of the analytic continuation are heteroclinic orbits and contained in $ H$.
\end{lemma}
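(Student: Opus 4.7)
The plan is to work entirely on the one-dimensional fast unstable manifold, where the dynamics reduce to the scalar complex ODE \eqref{eq:reduced_equation_fast_unstable} with separation-of-variables integral \eqref{eq:reduced_equation_time}. I will combine Lemma \ref{lem:unstable_mf_unbounded} to force non-emptiness of $\partial H$, the complex-conjugation symmetry inherited from $f(\R)\subset\R$ to produce the conjugate pair, and Lemma \ref{lem:unbounded_solutions_omega_limit} to pass heteroclinicity from $H$ onto the time $p$-paths of the analytic continuation.

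For non-emptiness: $H$ is open by Lemma \ref{lem:set_of_heteros} and non-empty since it contains $U \cap \R_-$. If we had $H = U$, every initial datum on the complex unstable manifold would give a forward-bounded orbit, contradicting Lemma \ref{lem:unstable_mf_unbounded}; connectedness of $U$ then yields $\partial H \cap U \neq \emptyset$. Any $q^* \in \partial H \cap U$ must have an unbounded forward orbit, for otherwise the openness argument of Lemma \ref{lem:set_of_heteros} --- expressing nearby data as small-complex-time images of $q^*$ --- would sweep a neighbourhood of $q^*$ into $\tilde H$ and, by connectedness, into $H$, contradicting $q^* \in \partial H$. Finite-time blow-up of this unbounded orbit follows from \eqref{eq:reduced_equation_time}: super-linear growth of $\mu \tau + \tilde f(\tau,\Upsilon(\tau))$ at infinity (manifest for the canonical quadratic nonlinearity) makes the improper integral $\int^{\infty} d\tau /(\mu \tau + \tilde f(\tau,\Upsilon(\tau)))$ convergent, so the blow-up time $T(q^*)$ is finite and the $Z$-norm of the lifted PDE orbit explodes there.

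The conjugate-pair structure follows from the reality of $f$ and of $\Upsilon$ on the real axis: the flow satisfies $\overline{\Phi(t,q)} = \Phi(\bar t, \bar q)$, so $\tilde H$, $H$ and $\partial H \cap U$ are all invariant under $q \mapsto \bar q$, and the boundary blow-up orbits come in a complex-conjugate pair. To continue past $T$ into the complex time plane, one lets $q$ approach infinity along a path staying in the upper (resp.\ lower) complex $q$-half-plane and inverts \eqref{eq:reduced_equation_time}: the resulting $t$-path bypasses the singularity at $T$ through $\Im\of{t}>0$ (resp.\ $\Im\of{t}<0$), giving the analytic continuations of the two conjugate blow-up orbits into the upper and lower complex time planes.

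For the final claim, a time $p$-path $t \mapsto t + i \tilde\delta$ of the continuation corresponds, by the complex flow property, to the real-time orbit of the shifted initial datum $\tilde q^* := q^*(i\tilde\delta)$ on the unstable manifold. For $|\tilde\delta|$ small, $\tilde q^*$ lies in the open set $H$ (it is on the $H$-side of $\partial H$ by the direction of the continuation), so the $p$-path orbit is forward-bounded; applying Lemma \ref{lem:unbounded_solutions_omega_limit} on a small complex neighbourhood of $\tilde q^*$ inside $H$ --- where heteroclinic initial data are dense by Lemma \ref{lem:set_of_heteros} --- then forces convergence to $u_- = 0$, proving heteroclinicity. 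The principal obstacle will be the finite-time blow-up step: without a quantitative asymptotic hypothesis on $\tilde f$ one could only conclude grow-up rather than blow-up, so I would invoke the super-linear structure of the nonlinearity from the canonical application and verify convergence of the improper integral in \eqref{eq:reduced_equation_time} directly; a secondary subtlety is to confirm that $\tilde q^*$ lands on the $H$-side rather than on the unbounded side of $\partial H$, which I expect to follow from the geometry of the inverse function of \eqref{eq:reduced_equation_time} combined with openness of $H$ and the conjugate symmetry.
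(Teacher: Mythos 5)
Your first step (non-emptiness of $\partial H \cap U$ via openness of $H$, connectedness of $U$, and Lemma \ref{lem:unstable_mf_unbounded}) and your last step (time $p$-paths correspond to shifted initial data lying in the open set $H$, hence are heteroclinic by Lemma \ref{lem:unbounded_solutions_omega_limit}) match the paper's argument. The conjugation symmetry $\overline{\Phi \of{t,q}} = \Phi \of{\bar t, \bar q}$ for the conjugate-pair statement is also fine. The gap is in the central claim, finite-time blow-up as opposed to mere grow-up.

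You argue that the boundary orbit blows up in finite time because the improper integral $\int^{\infty} d\tau / \of{\mu \tau + \tilde f \of{\tau, \Upsilon \of{\tau}}}$ converges, invoking super-linear growth of the nonlinearity ``at infinity.'' But \eqref{eq:reduced_equation_time} is the separation-of-variables formula for the \emph{local} reduced equation on the fast unstable manifold: the graph $\Upsilon$ is only defined on a small (complexified) neighbourhood of the equilibrium, and the scalar ODE $\dot q = \mu q + \tilde f\of{q,\Upsilon\of{q}}$ only governs the dynamics while the orbit remains in that neighbourhood. An unbounded forward orbit leaves this neighbourhood, after which the evolution is the full infinite-dimensional flow and the one-dimensional integral is not meaningful; there is also no super-linearity hypothesis on $f$ in the abstract setting of Section \ref{sec:unstable_manifold_boundary_het_blow_up}. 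The same objection applies to your construction of the complex-time continuation by ``letting $q$ approach infinity'' in a half-plane and inverting \eqref{eq:reduced_equation_time}. The paper rules out grow-up by a different mechanism: assuming the boundary orbit $q_0$ exists globally in real time, it connects $q_0$ to a point $\tilde q_0$ on the real heteroclinic orbit by a short complex time path lying in $H$ (constructed via the local separation-of-variables formula and the residue computation $\oint d\tau/\of{\mu\tau + f} = 2\pi i/\mu$), then uses commutation of real and complex time flows to write $\Phi\of{t,(q_0,\Upsilon\of{q_0})} = \Phi\of{\gamma\of{s_0}+t,(\tilde q_0,\Upsilon\of{\tilde q_0})}$; since the right-hand side decays to $u_-$ as $t \to \infty$, the orbit of $q_0$ would be bounded and convergent, contradicting unboundedness. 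You need this commutation argument (or some substitute that does not rely on the reduced ODE outside its domain of validity) to convert ``unbounded'' into ``finite-time blow-up.''
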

\begin{proof}
First we prove that $ H$ must have a boundary. As a consequence of Lemma \ref{lem:unstable_mf_unbounded}, we know that there must exist a $ q_0 \in U $ such that the forward flow is unbounded. This element can not be contained in $H$. Assume on the contrary that the boundary of the set $ \partial H \cap U $ is empty. Then $ H $ must be open, connected and closed relative to $ U $ and thus $ H = U $, which can not be true. We can assume, that there exists $ q_0 \in \partial H \cap U $. 
The proof of the Lemma argues by contradiction. Assume that $ q_0 $ was a grow up orbit.
\begin{enumerate}[label=(\roman*)]
\item Take $ \tilde q_0 \in U \cap \R_- $. Arguing as in Lemma \ref{lem:set_of_heteros} we consider the differential equation on the fast unstable manifold
\[
\dot q = \mu q + f \of{q, \Upsilon \of{q}}.
\]
Here we use separation of variables to solve the equation
\[
t_{q_0} = \int_{\tilde q_0}^{q_0} \frac{1}{\mu\tau + f \of{\tau, \Upsilon \of{\tau}}} d \tau.
\]
Thus taking any closed path $ \kappa : \left[0,1 \right] \to U $ with $ \kappa \of{0} = q_0 $ and $ \kappa \of{1} = q_0 $ enclosing the origin we get by Cauchy theorem
\[
t_{q_0}= \oint_{\tilde q_0}^{q_0} \frac{1}{\mu\tau + f \of{\tau, \Upsilon \of{\tau}}} d \tau= \frac{1}{2 i \pi \mu}.
\]
This implies that we can take a purely imaginary time path to encircle zero in the fast unstable manifold. Since multiplication with the imaginary unit corresponds to a rotation of $ \pi /2 $ real and imaginary trajectories are always orthogonal to each other, see Figure \ref{fig:idea_of_proof}.
\item By the above argument we can find a complex time path $ \gamma \of{s} : \left[0, s_0 \right) \to \C $ for some $ s_0 > 0 $ such that 

\[
q \of{\gamma \of{s}, \tilde q_0} \subset H \cap U, \qquad q \of{\gamma \of{s_0}, \tilde{q_0}} = q_0.
\]

By assumption, the real positive time flow for any solution $ \Phi \of{t, (q \of{\gamma \of{s}, \tilde q_0}, \Upsilon \of{q \of{\gamma \of{s}, \tilde q_0}}} $ stays bounded for all $ 0 \leq s < s_0 $ and thus the real and complex time flow commute, i.e.

\[
\Phi \of{ t, (q_0, \Upsilon \of{q_0})} = \Phi \of{ \gamma \of{a} + t, (\tilde q_0, \Upsilon \of{\tilde q_0})}, \qquad t > 0.
\]

For any $ \varepsilon > 0 $ we can choose a $ t_0 > 0 $ large enough such that

\[
\norm{\Phi \of{ t_0 + t, (\tilde q_0, \Upsilon \of{\tilde q_0})}} < \varepsilon, \qquad t > 0.
\]

This implies for $ \varepsilon $ small enough

\[
\norm{\Phi \of{ t_0 + t, (q_0, \Upsilon \of{q_0})}} = \norm{\Phi \of{ \gamma \of{a},\Phi \of{t_0 + t, (\tilde q_0, \Upsilon \of{\tilde q_0})}}} < 2 \varepsilon,
\]

for all $ t > 0 $. This contradicts the grow-up assumption.
\item Note that this also proves, that the one sided imaginary time flow of $ q_0 $ is contained in $H$.
\end{enumerate}
\end{proof}

This gives an estimate for the maximal complex time strip where the function can stay regular.

\begin{corollary}
The real heteroclinic orbit has blow-up in a strip of size $ \frac{1}{4 \pi \mu} $.
\end{corollary}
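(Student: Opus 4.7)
The plan is to extract the $\tfrac{1}{4\pi\mu}$ bound directly from the residue calculation performed in the proof of Lemma \ref{lem:boundary_heteroclinic_blow_up}, combined with the complex-conjugation symmetry of the real vector field. First I would recall that on the one-dimensional fast unstable manifold the dynamics reduce to the scalar analytic ODE $\dot q = \mu q + f(q,\Upsilon(q))$, where $f(q,\Upsilon(q)) = O(q^2)$ near the origin. Separation of variables gives the complex travel time $t(q,q_0)$ as the integral of $\tfrac{1}{\mu\tau + f(\tau,\Upsilon(\tau))}$, whose integrand has a simple pole at $\tau=0$ with residue $1/\mu$. The residue-theorem computation of Lemma \ref{lem:boundary_heteroclinic_blow_up} then yields that any simple closed loop encircling $0$ once contributes an imaginary time increment of magnitude $\tfrac{1}{2\pi\mu}$.

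Second, I would invoke the reality condition $f(\R)\subset\R$ together with the fact that $\Upsilon$ is real analytic on $(-a,a)$, so the complex-time flow satisfies the conjugation identity $\overline{\Phi(t,u_0)} = \Phi(\bar t,\bar u_0)$. Starting from the real heteroclinic initial condition $\tilde q_0\in U\cap\R_-$, the analytic continuations into the upper and lower complex time half-planes are therefore mirror images of one another. Consequently the first blow-up orbits $q_0^{\pm}\in\partial H\cap U$ encountered along purely imaginary time excursions from $\tilde q_0$ sit at conjugate heights $\pm i\tau_0$ for some $\tau_0>0$, and Lemma \ref{lem:boundary_heteroclinic_blow_up} guarantees these boundary orbits exist and are isolated blow-up singularities of the complex-time flow.

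Third, I would close the argument by a loop construction: pick a simple complex-time loop based at $\tilde q_0$ that rises to just beneath $+i\tau_0$, circumvents $q_0^+$ in the fast-unstable-manifold coordinate $q$ (possible because $H$ is open by Lemma \ref{lem:set_of_heteros}), crosses through the real axis on the far side of $0$, descends to just above $-i\tau_0$, circumvents $q_0^-$, and returns to $\tilde q_0$. By construction this loop encircles the origin in the $q$-plane exactly once and realises total imaginary displacement arbitrarily close to $2\tau_0$. Matching with the residue value forces $2\tau_0 \leq \tfrac{1}{2\pi\mu}$, i.e.\ $\tau_0 \leq \tfrac{1}{4\pi\mu}$, which is the advertised strip width.

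The main obstacle is the third step: showing rigorously that the encircling loop can be drawn so that its imaginary displacement is arbitrarily close to $2\tau_0$ while its winding number around $0$ is exactly $1$, without accidentally enclosing $q_0^\pm$ or picking up additional singularities of $\tfrac{1}{\mu\tau+f(\tau,\Upsilon(\tau))}$. This reduces to verifying that this integrand has no zeros in $U\setminus\{0\}$, a local computation using $f(\tau,\Upsilon(\tau)) = O(\tau^2)$ and shrinking $U$ if necessary, together with a careful continuity/deformation argument that nudges the loop past the boundary orbits $q_0^\pm$ while preserving the winding number.
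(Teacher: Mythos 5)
The paper offers no proof of this corollary; it is meant to be read off directly from the loop integral in Lemma \ref{lem:boundary_heteroclinic_blow_up} together with conjugation symmetry, which is exactly the route you take. So in approach you match the paper. However, there is an arithmetic inconsistency at the heart of your step two/three that you have inherited from the paper rather than resolved. You correctly note that $\frac{1}{\mu\tau + f\of{\tau,\Upsilon\of{\tau}}}$ has a simple pole at $\tau=0$ with residue $1/\mu$, but the residue theorem then gives a loop integral of $2\pi i/\mu$, i.e.\ an imaginary time increment of magnitude $2\pi/\mu$ --- not $\frac{1}{2\pi\mu}$. The value $\frac{1}{2i\pi\mu}$ printed in Lemma \ref{lem:boundary_heteroclinic_blow_up} contradicts both your own residue statement and the paper's later computation $\oint \frac{dq}{\mu q + \tilde f\of{q}} = \frac{2\pi i}{\mu}$ in the final theorem. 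Halving the correct value by conjugation symmetry yields a strip half-width of $\pi/\mu$, not $\frac{1}{4\pi\mu}$; you cannot have both "residue $=1/\mu$" and the advertised constant.

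Separately, your third step is not yet an argument: the imaginary displacement of the time path along a closed $q$-loop is \emph{fixed} at $2\pi/\mu$ by the residue theorem, independently of where the boundary orbits $q_0^{\pm}$ sit, so "realises total imaginary displacement arbitrarily close to $2\tau_0$" does not follow from the loop construction and cannot by itself force $2\tau_0$ to be bounded by the loop value. The clean way to get the factor $1/2$ is local: near $u_+$ the flow is $q\of{t}\approx q_0 e^{\mu t}$, so a time strip of half-width $\delta$ sweeps out the angular sector $\abs{\arg q - \pi} < \mu\delta$ about the negative real $q$-axis; as soon as $\mu\delta \geq \pi$ this sector contains the positive real axis, i.e.\ the real blow-up orbit, and following the real-time flow from such a point produces a singularity inside the strip. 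This gives $\delta \leq \pi/\mu$ directly (half the loop time) and replaces both the symmetric-placement claim and the delicate deformation argument you flag as your main obstacle.
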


\section{The non-linear heat equation} \label{sec:nonlin_heat}

In this section, we will focus on the complex nonlinear heat equation with quadratic nonlinearity
\begin{equation} \label{eq:nonlinear_heat_quad}
u_t = u_{xx} + u^2, \qquad x \in \left(-1, 1 \right), \; u \of{\pm 1} = 0.
\end{equation}

Consider the space $ Z := H^2 \of{I} \cap H_0^1 \of{I} $, $ Y := H^1_0 \of{I} $ and $ X := L^2 \of{I} $. Here all the functions are complex valued. The Laplace operator $ A := \partial_{xx} $ is a bounded operator from $ Z $ to $ X$ and generates an analytic semigroup  \citep{lunardi95},
\[
A : D \of{A} := H^2 \of{I} \cap H_0^1 \of{I} \subset L^2 \of{I} \mapsto L^2 \of{I}.
\]
By Sobolev embedding theorems, the quadratic nonlinearity $ f \of{u} := u^2 $ is analytic from $ f : Z \to Y $ as well as a function $ f : Y \to X $. By \citep{lunardi95}, \citep{henry93} the local solution is analytic in time with values in $ Z $. 

We give a more advanced description of the time analytic continuation of the blow-up orbit of Lemma \ref{lem:boundary_heteroclinic_blow_up}. The nonlinear heat equation has an unique positive equilibrium $ u_+ $ and it is well-known that solutions starting above $ u_+ $ exhibit finite time blow-up. Furthermore, the fast unstable manifold is one-dimensional and solutions that start below $ u_+ $ converge to zero.

We follow the idea of \citep{guo12} to disprove blow-up of solutions on the unstable manifold of $u_+$ with complex initial conditions. In particular, we show, that the real blow-up orbit stays uniformly bounded on time $p$-paths with non-vanishing imaginary part. Note, that along time $p$-paths the equation \eqref{eq:nonlinear_heat_quad} can be written as real system of real and imaginary part for $ u = v + i w $

\begin{align} \label{eq:system_nonlinear_heat}
\begin{split}
v_t &= v_{xx} + v^2 - w^2, \\
w_t &= v_{xx} + 2 v w.
\end{split}
\end{align}

Different from \citep{guo12}, we consider Dirichlet boundary conditions, but we use the same maximum principle to show boundedness of solutions. The system \eqref{eq:system_nonlinear_heat} possesses a maximum principle \citep{weinberger75}, \citep{redheffer80}, \citep{evans10}. One corollary from these maximum principles is that the solutions of systems of the form $ w_t = \Delta w + f \of{w} $ where $ w \in \R^2 $ is contained in the ODE solution $ \dot w = f \of{w} $ if the ODE solution defines a convex set and the initial condition and boundary condition lies in the interior of the convex set. \\
The ODE solutions of equation \eqref{eq:system_nonlinear_heat} are circles, see Figure \ref{fig:invariant_ode_solution}. We prove that the non-real initial condition on the fast unstable manifold of $ u_+ $ is contained in such circles.

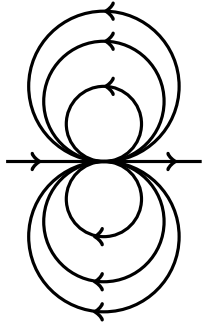
\begin{figure}[h]
\centering
\begin{tikzpicture}
     
\draw[very thick, decoration={markings, mark=at position 0.25 with {\arrow[ultra thick]{>}}},
        postaction={decorate}] (0,1) circle [radius=1];
\draw[very thick, decoration={markings, mark=at position 0.75 with {\arrow[ultra thick]{<}}},
        postaction={decorate}] (0,-1) circle [radius=1];
\draw[very thick,decoration={markings, mark=at position 0.25 with {\arrow[ultra thick]{>}}},
        postaction={decorate}] (0,0.5) circle [radius=0.5];
\draw[very thick, decoration={markings, mark=at position 0.75 with {\arrow[ultra thick]{<}}},
        postaction={decorate}] (0,-0.5) circle [radius=0.5];
\draw[very thick, decoration={markings, mark=at position 0.25 with {\arrow[ultra thick]{>}}},
        postaction={decorate}] (0,0.8) circle [radius=0.8];
\draw[very thick, decoration={markings, mark=at position 0.75 with {\arrow[ultra thick]{<}}},
        postaction={decorate}] (0,-0.8) circle [radius=0.8];

\draw[very thick, decoration={markings, mark=at position 0.75 with {\arrow[ultra thick]{>}}},
        postaction={decorate}] (0,0) -- (1.3,0);
\draw[very thick, decoration={markings, mark=at position 0.75 with {\arrow[ultra thick]{<}}},
        postaction={decorate}] (0,0) -- (-1.3,0);

\end{tikzpicture}
\caption{Solution to the ODE flow} \label{fig:invariant_ode_solution}
\end{figure}

\subsection*{ODE results}

The maximum principle of the parabolic system \eqref{eq:quadratic_system} allows to use the ODE flow to the quadratic equation $ \dot U = U^2$ to show a priori estimates on PDE solutions. 

\paragraph*{Explicit solution of the ODE}

Consider the differential equation

\[
\dot U \of{t} = U^2 \of{t}, \qquad t \in \C,\; U \of{0} = U_0 \in \C,
\]

The equation has an explicit solution, which can be obtained by separation of variables. 

\begin{equation} \label{eq:quadratic_ode_explicit}
U \of{t} := \frac{1}{1/U_0 - t}.
\end{equation}

The flow $ \eta \of{t, U_0} $ is defined as the solution $ U \of{t} $. The solution maps straight lines in $ t = r e^{i \theta} $, $ r \in \R $ to circles, if it exists as a map to the complex plane $ \C$.

But one can also consider the solution $ U \of{t} $ as an analytic function from the complex plane to the Riemann sphere $ \hat \C $ for all $ t \in \C $. The Riemann sphere is the set $ \C \cup \left \{ \infty \right \} $ together with the charts $ \tilde U_1 := U $ and $ \tilde U_2 = U^{-1} $. The two charts coincide almost everywhere. The domain of definition of each chart is $ \C $. Using the second chart, i.e. introducing $ \tilde{U}_2 \of{t} := U_1^{-1} \of{t} $ yields the differential equation
\[
\dot{ \tilde U}_2 \of{t} = -1, \qquad t \in \C, \qquad (\tilde U_2)_0 = U_0^{-1}.
\]
This has the trivial solution $ \tilde U_2 \of{t} = U_0^{-1} - t $, which is the same as \eqref{eq:quadratic_ode_explicit}. Note that the solution even exists if $ \tilde U_2 = 0 $, that is $ \abs{U} = \infty $. This implies, that the imaginary part of $ \Im \of{\tilde U_2 \of{t}} = \Im \of{U_0^{-1}} $ is a conserved quantity for the real time flow. To study the behaviour for unbounded $ t $ one has to use the chart $ \tilde U_1 $, in which all orbits converge to zero. The point zero is the only point that can not be described in the chart $ \tilde U_2 $. Since there is a conserved quantity in the $ \tilde{U_2} $ chart, there also exists a conserved quantity in the $ \tilde{U}_1 $ chart. 

Writing the function $ U \of{t} := V \of{t} + i W \of{t} $, it can also be seen by explicit calculation the quantity
\[
H \of{t} := V^2 \of{t} + \of{W \of{t} - W_0^{-1}}^2,
\]
 is conserved by the flow. This gives the following definition.

\begin{definition}[Solution circle/disk]
A circle of the form

\[
C_{v_0} := \left \{ (u, v) \in \R^2, \; u^2  + (v-v_0^{-1})^2 = R^2 \right \} ,
\]
is a solution to the ODE and called solution circle. The interior of the circle is called solution disk.
\end{definition}

\paragraph*{Some ODE results}

We use properties of the flow $ \eta $ to prove invariance principles of the PDE. The proofs are based on an idea of \citep{guo12}.

\begin{lemma}
Consider a straight half line $ \gamma \of{s} := s e^{i \phi} $, $ s \geq 0, \; 0 < \phi < \pi $ of initial conditions.
Then the curve $ z \of{s} := \eta \of{t, \gamma \of{s}} $, where $ \eta $ is the ODE flow of $ \dot U = U^2 $, is convex for each fixed $ t > 0 $.
\end{lemma}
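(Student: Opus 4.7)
My plan is to exploit the explicit formula \eqref{eq:quadratic_ode_explicit} and recognise that for fixed $t > 0$ the initial-value map is a M\"obius transformation, so the image of any straight line in the $U_0$-plane is a generalised circle. Concretely,
\[
z \of{s} \;=\; \eta \of{t, s e^{i \phi}} \;=\; \frac{s e^{i \phi}}{1 - t s e^{i \phi}} \;=\; M_t \of{s e^{i \phi}}, \qquad M_t \of{U} := \frac{U}{1 - t U}.
\]
The M\"obius transformation $M_t$ has its unique pole at $U = 1/t \in \R_+$. Because $0 < \phi < \pi$ forces $e^{i \phi} \notin \R$, no real multiple of $e^{i \phi}$ equals the positive real number $1/t$, so $M_t$ is holomorphic on the whole line $L_\phi := \left \{ s e^{i \phi}, \; s \in \R \right \}$ and sends it into $\C$.

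Next I invoke the classical fact that M\"obius transformations map the family of generalised circles of $\hat \C$ into itself. The image of $L_\phi \cup \left \{ \infty \right \}$ is therefore a generalised circle containing $M_t \of{0} = 0$ and $M_t \of{\infty} = -1/t$ but, by the previous paragraph, not containing $\infty$. Consequently the image is an honest circle $\Gamma_t \subset \C$. The half-line $\gamma \of{\left[0, \infty\right)}$ is a connected sub-arc of $L_\phi \cup \left \{ \infty \right \}$ with endpoints $0$ and $\infty$, and since $M_t$ is a bijection of $\hat \C$ onto itself, its image is a connected sub-arc of $\Gamma_t$ with endpoints $0$ and $-1/t$, parametrised injectively by $s \in \left[0, \infty\right)$.

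A circular arc has constant-sign geodesic curvature, hence is a convex curve in the plane, which is exactly the claim. The only step that requires any care is the pole analysis of the previous paragraph, ensuring that no finite $s \geq 0$ is mapped to $\infty$ so that the image is a genuine planar circle rather than a straight line traversed twice; once that has been settled, the rest of the argument reduces to the standard circle-preserving property of M\"obius maps, and I anticipate no further obstacles.
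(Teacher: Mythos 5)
Your proof is correct, but it takes a genuinely different route from the paper. The paper verifies convexity by brute force: it uses the curvature functional $\kappa \of{s} := \Re \of{ i z' \bar z ''}$, computes it explicitly for $z \of{s} = \eta \of{t, s e^{i\phi}}$, and obtains $\kappa \of{s} = 2 t \sin \of{\phi} \cdot \left( \of{s t - \cos \phi}^2 + \sin^2 \phi \right)^{-3}$, which is positive precisely because $\sin \phi > 0$. You instead observe that the time-$t$ solution map $U_0 \mapsto U_0 / \of{1 - t U_0}$ is a M\"obius transformation whose pole $1/t$ lies off the line $L_\phi$ (your pole analysis is the right place to use $0 < \phi < \pi$, and it is the exact counterpart of the paper's factor $\sin \phi > 0$ in the denominator, which likewise never vanishes), so the image of the half-line is an injectively parametrised arc of a genuine circle through $0$ and $-1/t$, hence of constant-sign curvature. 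The two arguments buy slightly different things: your approach is more conceptual, explains \emph{why} the image curves are circles (consistent with the paper's Figure of solution circles), and generalises immediately to any generalised circle of initial data; the paper's computation is self-contained, requires no appeal to the circle-preserving property of M\"obius maps, and yields the explicit value of the curvature, which could be reused for quantitative estimates. One small point of care in your write-up: since the parametrisation by $s$ is not by arc length, "circular arc implies constant-sign curvature" implicitly uses that $z' \neq 0$ and that the arc is traversed monotonically; both follow from the injectivity and nonvanishing derivative of $M_t$ away from its pole, which you have already established, so the gap is cosmetic rather than substantive.
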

\begin{proof}
Recall that a plane curve $ s \mapsto z \of{s} \in \C $ is convex if the curvature

\[
\kappa \of{s} := \Re \of{ i z' \bar z ''},
\]

does not change sign. Explicit calculation of $ \kappa \of{s} $ yields
\[
\kappa \of{s} = \frac{2 t \sin(\phi)}{(\of{s t - \cos \phi}^2+ \sin(\phi)^2)^3}.
\]
Since the straight line starts in the upper half plane, i.e. $ 0 < \phi < \pi $, we have $ \sin \phi > 0 $ and the curvature is always positive for $ t > 0 $, see Figure \ref{fig:invariant_set}.
\end{proof}

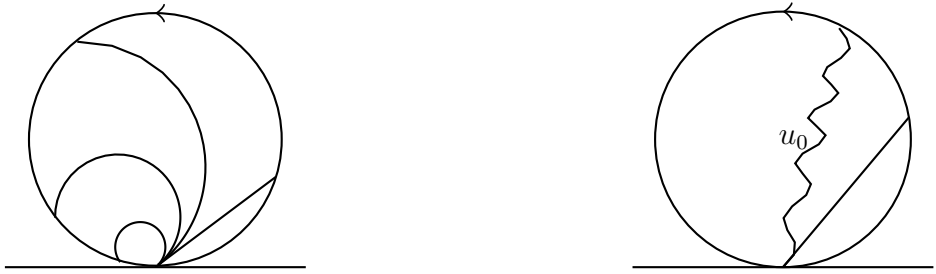
\begin{figure}[h]
\begin{subfigure}[t]{.5\textwidth}  
\centering
\scalebox{2}{\begin{tikzpicture}
     
\draw[decoration={markings, mark=at position 0.25 with {\arrow{>}}},
        postaction={decorate}] (0,0.85) circle [radius=0.84];

\draw (-1,0) -- (1,0);
%\draw (0,0) -- (0,2);
\draw (0,0) -- (0.8,0.6);% node[right, scale=0.5] {$\gamma_{t_0} $};

%\draw [domain=-1:1] plot ({(0.8-\x)/((0.8-\x)*(0.8-\x)+0.6*0.6)}, {0.6/((0.8-\x)*(0.8-\x)+0.6*0.6)});

\draw [domain=40:0.99, samples=400] plot ({(0.8*\x-1)/((0.8*\x-1)*(0.8*\x-1)+0.6*\x*0.6*\x)}, {0.6*\x/((0.8*\x-1)*(0.8*\x-1)+0.6*\x*0.6*\x)});% node[left, scale=0.5] {$\gamma_{t_1} $} ;

\draw [domain=40:0.99, samples=400] plot ({(0.8*\x-2)/((0.8*\x-2)*(0.8*\x-2)+0.6*\x*0.6*\x)}, {0.6*\x/((0.8*\x-2)*(0.8*\x-2)+0.6*\x*0.6*\x)});% node[left, scale=0.5] {$\gamma_{t_2} $};

\draw [domain=40:1, samples=400] plot ({(0.8*\x-5)/((0.8*\x-5)*(0.8*\x-5)+0.6*\x*0.6*\x)}, {0.6*\x/((0.8*\x-5)*(0.8*\x-5)+0.6*\x*0.6*\x)});% node[below, scale=0.5] {$\gamma_{t_3} $};

\end{tikzpicture}}
\caption{Evolution of straight line in invariant region for $ 0 = t_0 < t_1 < t_2 < t_3$} \label{fig:invariant_set}
\end{subfigure}
\begin{subfigure}[t]{.5\textwidth}  
\centering
\scalebox{2}{\begin{tikzpicture}
     
\draw[decoration={markings, mark=at position 0.25 with {\arrow{>}}},
        postaction={decorate}] (0,0.85) circle [radius=0.85];
       postaction={decorate}] (0,-0.8) circle [radius=0.8];

\draw (-1,0) -- (1,0);
%\draw (0,0) -- (0,2);
\draw (0,0) -- (0.84,1);

\draw [domain=0:0.83] plot ({0.5*\x+0.07*sin(2000*\x)
+0.01*sin(8002*\x)}, {2*\x+0.07*sin(330*\x)});
\draw (-0.1,0.85) circle [radius=0cm] node[right, scale=0.5] {$u_0 $};

\end{tikzpicture}}
\caption{Initial condition in invariant region}  \label{fig:invariant_set_initial_condition}
\end{subfigure}
\caption{Invariant region in complex domain}
\end{figure}

The straight line is transported by the ODE flow and traces a convex domain, see Figure \ref{fig:invariant_set_initial_condition}. A solution, which is contained in a solution disk and starting to the left of a straight line, must converge to zero. This idea is due to \citep{guo12} and allows to prove the following a priori estimate on the analytic continuation.

\begin{lemma} \label{lem:straight_line_sub_sol}
Assume that the initial condition $ u_0 \of{x} := v_0 \of{x} + i w_0 \of{x} $ is contained in the solution disk of $ s_0 e^{i \varphi} $, $ 0 < \varphi < \pi $ and $ 0 < \max \arg u_0 \of{x} < \pi $, see Figure \ref{fig:invariant_set_initial_condition}. Then the ODE solution $ \eta \of{t, u_0 \of{x}}$ satisfies the following inequality 
\[
\norm{\eta \of{t, u_0 \of{x}}}_\infty \leq \begin{cases}  \of{\of{\alpha/s_0 -t}^2 + \beta^2/s_0^2}^{-1/2} & \textrm{ for } \alpha t < 1/s_0, \\ \of{\alpha \beta t}^{-1} & \textrm{ for } \alpha t \geq 1/s_0. \end{cases}
\]
with $ \alpha := \cos \phi $ and $ \beta := \sin \phi $.
\end{lemma}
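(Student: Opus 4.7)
The plan is to exploit the explicit ODE flow $\eta(t, U_0) = U_0/(1 - U_0 t)$, which gives $|\eta(t, u_0(x))|^2 = |u_0(x)|^2 / |1 - u_0(x) t|^2$ with denominator $(1 - v_0 t)^2 + (w_0 t)^2 = 1 - 2 v_0 t + |u_0|^2 t^2$. The solution-disk hypothesis on $s_0 e^{i\phi}$ is equivalent, via the conserved quantity $\Im(1/U) = -\beta/s_0$ on the bounding orbit, to the pointwise inequality $w_0(x) \geq \beta\,|u_0(x)|^2/s_0$; in polar coordinates $u_0 = \rho e^{i\theta}$ this reads $\rho \leq s_0 \sin\theta/\beta$, and the target quantity becomes $|\eta|^2 = \rho^2/(\rho^2 t^2 - 2\rho t \cos\theta + 1)$.

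For the first case $\alpha t < 1/s_0$, I would first restrict to the straight chord $s \mapsto s e^{i\phi}$, $s \in [0, s_0]$, where $|\eta(t, s e^{i\phi})|^2 = s^2/(s^2 t^2 - 2 s t \alpha + 1)$. A direct differentiation shows that the only positive critical point of this expression is $s^\star = 1/(\alpha t)$, and the assumption $\alpha t < 1/s_0$ places $s^\star > s_0$ outside the admissible segment; hence the chord maximum is attained at $s = s_0$ and equals $((\alpha/s_0 - t)^2 + \beta^2/s_0^2)^{-1/2}$, matching the claimed bound. The argument hypothesis, read via Figure~\ref{fig:invariant_set_initial_condition}, then places $u_0(x)$ on the side of $\gamma$ from which the chord-endpoint value dominates, extending the bound to the full feasible region.

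For the second case $\alpha t \geq 1/s_0$, the claimed bound $1/(\alpha\beta t)$ is equivalent, after squaring and rearranging, to $(1 - \alpha^2\beta^2)(\rho t)^2 - 2(\rho t)\cos\theta + 1 \geq 0$ holding for every admissible $(\rho, \theta)$. Viewed as a quadratic in $\rho t$ with positive leading coefficient $1 - \alpha^2 \beta^2$, its discriminant is $4(\alpha^2\beta^2 - \sin^2\theta)$, which is nonpositive precisely when $\sin(\arg u_0(x)) \geq \alpha\beta = \tfrac{1}{2}\sin(2\phi)$. On the chord itself $\sin\phi = \beta \geq \alpha\beta$, and the hypothesis, together with the disk constraint, should be used to confine $u_0(x)$ to the sector around the chord where this angular condition persists.

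The principal obstacle I expect is pinning down the admissible region of $u_0(x)$ implied by the hypothesis: the bare solution-disk constraint alone is too weak, since points near the top of the disk $(0, s_0/\beta)$ have modulus $s_0/\beta > s_0$ and would violate even the $t = 0$ value of the first-case bound. Consequently, the max-argument condition, as visualized in Figure~\ref{fig:invariant_set_initial_condition}, must be interpreted as carving out a strictly smaller sector on which both the chord extremality (first case) and the discriminant inequality (second case) are simultaneously valid; making this geometric interpretation precise is the crucial step of the proof.
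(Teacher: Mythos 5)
Your computation coincides with the paper's own proof on the only region the paper actually treats, namely the chord $s e^{i\varphi}$, $0 < s \le s_0$. The paper parametrizes this chord by $r = 1/s \in [1/s_0, \infty)$, writes $|\eta(t, s e^{i\varphi})| = ((r\alpha - t)^2 + r^2\beta^2)^{-1/2}$, and locates the unique critical point $r = \alpha t$ of the denominator $r^2 - 2r\alpha t + t^2$; when $\alpha t < 1/s_0$ the critical point lies outside $[1/s_0,\infty)$ and the maximum sits at the endpoint $r = 1/s_0$, which is exactly your first case, and when $\alpha t \ge 1/s_0$ the interior critical value gives $1/(\beta t) \le 1/(\alpha\beta t)$, which is the second case. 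Your discriminant argument for the second case is a cosmetic variant of evaluating at that critical point: on the chord $\theta = \varphi$ your angular condition $\sin\theta \ge \alpha\beta$ reduces to $\beta \ge \alpha\beta$, which holds automatically, so nothing is gained or lost there.

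The obstacle you flag at the end is genuine, and you should be aware that the paper does not resolve it either: its proof never leaves the chord, and the lemma as literally stated is false for general points of the solution disk. Your counterexample is correct --- the top of the disk has modulus $s_0/\beta > s_0$ and violates the first-case bound already at $t=0$ --- and one can likewise check that boundary points of the solution circle with argument different from $\varphi$ violate the bound for suitable $t$, since the flow preserves $\Im(1/U)$ but translates $\Re(1/U)$, so $|\eta(t,U_0)|$ peaks at a time depending on $U_0$. The missing restriction is not recoverable from the stated hypotheses (solution disk plus an argument condition); in the downstream application (Theorem \ref{thm:continuation_back_to_the_real_axis}) the reduction to the chord is supplied by Lemma \ref{lem:solution_left_line} together with the parabolic maximum principle and the convexity of the evolving image of the straight line, which trap the PDE solution in the region swept out by the chord. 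So your proposal reproduces the paper's argument exactly where that argument is valid, and the ``crucial geometric step'' you could not make precise is a gap in the paper's formulation of the hypothesis rather than a missing idea on your side; the honest fix is to state and prove the lemma for initial data on the half-line segment only and to invoke the trapping argument separately.
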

\begin{proof}
By the explicit representation of $ \eta \of{t, s e^{i \varphi}} $, we obtain

\[
\abs{\eta \of{t, s e^{i \varphi}}} = \frac{1}{\of{r \alpha - t}^2 + r^2 \beta^2}.
\]

where $ r = s^{-1} $, $ s_0^{-1} \leq r < \infty $.

The derivative with respect to $r$ is

\[
\partial_r \eta \of{t, s e^{i \varphi}} = \frac{2 \of{r \alpha-t} \alpha + 2 r \beta^2}{\of{\of{r \alpha - t}^2 + (r \beta)^2}^2} r^2.
\]

Setting the derivative to zero we obtain

\[
2 \of{r \alpha-t} \alpha + 2 r \beta^2 = 0 \Rightarrow r = t\alpha.
\]

Thus as long as $ \alpha t < 1/s_0 $ we have $ r = r_0 $, i.e.
\[
\norm{\eta \of{t, u_0 \of{x}}}_\infty \leq \of{\of{r_0 \alpha -t}^2 + r_0^2 \beta^2}^{-1/2},
\]
and for $ \alpha t > r_0 $ we have
\[
\norm{\eta \of{t, u_0 \of{x}}}_\infty \leq \frac{1}{\alpha \beta t}.
\]
In particular does the solution converge to zero as expected.
\end{proof}

\begin{lemma} \label{lem:solution_left_line}
Assume, that the smooth function $ u_0 \of{x} := v_0 \of{x} + i w_0 \of{x} : \left(-1, 1 \right) \to \C $, $ u_0 \of{\pm 1} = 0 $ has positive imaginary part and positive boundary derivative. Then an angle $ 0 < \varphi < \pi $ exists, such that $ u_0 \of{x} $ is to the left of the half-line $ z \of{s} := s e^{i \varphi} $. The angle $ \varphi $ satisfies 
\[
\varphi = \min_{ x \in I} \arctan \of{\frac{w_0 \of{x}}{v_0 \of{x}}}.
\]
\end{lemma}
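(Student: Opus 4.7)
My plan is to reduce the geometric condition to an inequality between arguments, then control the boundary behaviour by the positive derivative hypothesis and invoke compactness. For any point $z = r e^{i \theta}$ with $0 < \theta < \pi$, $z$ lies to the left of the half-line $\{s e^{i\varphi} : s \geq 0 \}$ precisely when $\theta \geq \varphi$. Since $w_0 > 0$ on $I$ places each $u_0 \of{x}$ strictly in the upper half-plane, the lemma amounts to producing a positive $\varphi \in (0, \pi)$ such that $\arg u_0 \of{x} \geq \varphi$ for every $x \in I$, and the largest such $\varphi$ is the infimum of $x \mapsto \arg u_0 \of{x}$.

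On the interior I would use that $v_0 \of{x} > 0$ (which holds in the relevant setting, where $u_0$ is a sufficiently small complex perturbation of the positive real equilibrium $u_+$ along the unstable manifold). Then $\arg u_0 \of{x} = \arctan \of{w_0 \of{x} / v_0 \of{x}} \in (0, \pi/2)$, and this quantity is strictly positive throughout $I$ by positivity of $w_0$. The only delicate point is the boundary $x = \pm 1$, where both $v_0$ and $w_0$ vanish, so $w_0 / v_0$ is the indeterminate form $0/0$. Here the positive boundary derivative hypothesis enters: by Taylor expansion, $v_0 \of{x} = v_0' \of{\pm 1} (x \mp 1) + o(x \mp 1)$ and analogously for $w_0$, with nonzero leading coefficients of matching sign (positive when pointing inward). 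Consequently $w_0 / v_0$ extends continuously to the strictly positive boundary value $w_0' \of{\pm 1} / v_0' \of{\pm 1}$, and so does $\arctan \of{w_0 / v_0}$.

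Once the continuous extension is in place, $x \mapsto \arctan \of{w_0 \of{x} / v_0 \of{x}}$ is continuous and strictly positive on the compact interval $[-1, 1]$, so it attains its minimum at some $x^* \in [-1, 1]$. Setting $\varphi := \arctan \of{w_0 \of{x^*} / v_0 \of{x^*}} \in (0, \pi/2)$ yields $\arg u_0 \of{x} \geq \varphi$ for every $x \in I$, which is exactly the assertion that $u_0 \of{x}$ lies to the left of the half-line $s e^{i \varphi}$. I expect the main obstacle to be the boundary analysis, but the positive inward-normal derivative hypothesis resolves it cleanly; the remaining steps reduce to compactness on $[-1, 1]$ and the elementary identification of \emph{left of the ray} with \emph{larger argument}.
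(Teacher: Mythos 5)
Your proof is correct and follows essentially the same route as the paper's: interior control via compactness and strict positivity of $w_0$, and boundary control via Taylor expansion at $x=\pm 1$, using the positive boundary derivatives to resolve the $0/0$ indeterminacy in $w_0/v_0$. The only minor difference is that you additionally invoke $v_0>0$ to place $\arg u_0\of{x}$ in $\left(0,\pi/2\right)$, whereas the hypothesis $w_0>0$ alone already gives $\arg u_0\of{x}\in\left(0,\pi\right)$, which suffices; this extra assumption is harmless in the intended application to perturbations of the positive equilibrium.
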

\begin{proof}
On any compact subset $ K \subset I $, $ w_0 \of{x} $ is uniformly bounded from below and there exists a $ \delta > 0 $ such that
\[
\delta < \arctan \of{\frac{w_0 \of{x}}{v_0 \of{x}}} < \pi - \delta, \qquad x \in K.
\]
The only problem may occur at the boundary. Expansion of $ x $ close to the boundary gives for $  x = \pm 1 + y $.

\begin{equation} \label{iq:boundary_r}
\frac{\mp (w_0)_x \of{\pm 1} y + C y^2}{\mp (v_0)_x \of{\pm 1} y + C y^2} > c_1 > 0.
\end{equation}
for $ y $ small enough. This implies, that $ \varphi $ is positive and less than $ \pi$.
\end{proof}

\begin{lemma} \label{lem:trapped_solution_positive_im}
Let the smooth function $ u_0 \of{x} := v_0 \of{x} + i w_0 \of{x} : \left(-1, 1 \right) \to \C $, $ u_0 \of{\pm 1} = 0 $ have positive imaginary part and positive boundary derivative. Then the solution is contained in an invariant ODE disk.
\end{lemma}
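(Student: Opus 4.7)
The plan is to find an explicit solution disk $\mathcal{D}_{w_*}$ (in the sense of the definition preceding Lemma~\ref{lem:straight_line_sub_sol}) whose closure contains $u_0(I)$, and then invoke the convex maximum principle. The family of solution disks is parametrized by the intercept $i w_*$ on the positive imaginary axis, and the conservation law for $\dot U = U^2$ makes this trivial to pick out.

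First I would recall from the preceding ODE discussion that $\Im(1/U)$ is conserved by the ODE flow, so every solution circle is a level set $\{z \in \C : \Im(1/z) = c\}$. For $c = -1/w_* < 0$ this level set is the circle in the upper half-plane passing through the origin with center $(0, w_*/2)$ and radius $w_*/2$; its interior is exactly the set of $u = v + i w$ for which
\[
\frac{w}{v^2 + w^2} \;>\; \frac{1}{w_*},
\qquad \text{equivalently} \qquad w_* \;>\; \frac{v^2 + w^2}{w}.
\]
Thus it suffices to produce a finite constant
\[
M \;:=\; \sup_{x \in (-1,1)} \frac{v_0(x)^2 + w_0(x)^2}{w_0(x)},
\]
and then pick any $w_* > M$; the disk $\mathcal{D}_{w_*}$ contains $u_0(x)$ in its interior for every $x \in (-1,1)$, and the Dirichlet boundary value $u(\pm 1) = 0$ sits on $\partial \mathcal{D}_{w_*}$ as an ODE equilibrium.

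The heart of the argument is therefore the finiteness of $M$. On any compact subinterval $K \subset (-1,1)$ the continuous positive function $w_0$ is bounded below, so $(v_0^2 + w_0^2)/w_0$ is bounded there. Near the endpoints the hypothesis $u_0(\pm 1) = 0$ together with the positive boundary derivative forces $v_0$ and $w_0$ to vanish linearly, that is, using the expansion in~\eqref{iq:boundary_r}, as $x \to \pm 1$,
\[
w_0(x) = \mp (w_0)_x(\pm 1)(x \mp 1) + O\bigl((x\mp 1)^2\bigr),
\qquad
v_0(x) = \mp (v_0)_x(\pm 1)(x \mp 1) + O\bigl((x\mp 1)^2\bigr),
\]
with $\mp(w_0)_x(\pm 1) > 0$. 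Consequently $(v_0^2 + w_0^2)/w_0 = O(x \mp 1)$ near the boundary, and in particular stays bounded. Combining the two regimes yields $M < \infty$.

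Finally, with the initial data lying in the closed disk $\overline{\mathcal{D}_{w_*}}$, interior except at the two boundary points which are at the ODE equilibrium $0 \in \partial \mathcal{D}_{w_*}$, the convex maximum principle of \citep{weinberger75,redheffer80,evans10} (applied as in the paragraph preceding the subsection on ODE results) keeps the PDE solution $u(t,x) = v(t,x) + i w(t,x)$ of~\eqref{eq:system_nonlinear_heat} inside $\overline{\mathcal{D}_{w_*}}$ for all $t > 0$ for which it exists. The main technical obstacle is precisely the boundary behavior: it is the positive boundary derivative hypothesis that prevents $(v_0^2 + w_0^2)/w_0$ from blowing up near $x = \pm 1$, and hence that guarantees the existence of a \emph{single} ODE solution disk enclosing the entire initial profile.
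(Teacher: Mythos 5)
Your proposal is correct and follows essentially the same route as the paper: both reduce the claim to the finiteness of $\sup_{x} \bigl(v_0(x)^2 + w_0(x)^2\bigr)/w_0(x)$ (the paper's $R(y,z) = (y^2+z^2)/(2z)$ is your $M$ up to a factor of $2$), handled by compactness in the interior and a linear boundary expansion using the positive boundary derivative near $x = \pm 1$. Your explicit invocation of the convex maximum principle at the end is a welcome completion of a step the paper leaves implicit.
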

\begin{proof}
The proof is similar to the proof above. Geomtrically it is already clear, that since the initial condition is contained in a cone in the upper half-plane, we can find in invariant ODE disk.
\begin{enumerate}[label=(\roman*)]
\item We need that the curve $ \gamma \of{x} := (v_0 \of{x}, w_0 \of{x}) $  is contained in a circle around $ i R $ of radius $ R $ for some $ R > 0 $. Thus we have to calculate
\[
R^* := \sup_{x \in I} R \of{v_0 \of{x}, w_0 \of{x}},
\]

where $ R \of{y,z} $ is defined as follows
\[
R \of{y,z} := \frac{y^2+ z^2}{2 z}.
\]

The expression for $ R \of{y,z} $ comes from the equation $ y^2 + (R-z)^2 = R^2 $ solved for $ R $ and describes the circle around $ i R $  containing the point $(y,z) $ and the origin.
We need to show that $ R^* $ is bounded.
\item Consider any compact subset $ K \subset I $. Define $ \alpha := \min_{K} \varphi \of{x} > 0$. This implies
\begin{equation} \label{iq:compact_r}
\sup_{x \in I} R \of{v_0 \of{x}, w_0 \of{x}} = \sup_{x \in I} \frac{v_0 \of{x}^2 + w_0 \of{x}^2}{2 v_0 \of{x}} \leq \frac{4 \norm{u_0}^2_\infty}{ \min_{x \in K} w_0 \of{x}} < \infty .
\end{equation}
\item The only problem can occur at the boundary of $ I $. Here we obtain again by expansion close to the boundary with $ x = \pm 1 + y $, $ y > 0 $ small enough
\begin{equation} \label{iq:boundary_r}
R \of{v_0 \of{x}, w_0 \of{x}} \leq \frac{(v_0)^2_x \of{\pm 1} y^2 + (w_0)^2_x \of{\pm 1} y^2 + C y^3}{ \mp (w_0)_x \of{ \pm 1 } y + C y^2} < c_1 y.
\end{equation}
\end{enumerate}
\end{proof}

\subsection*{Analysis of the PDE}

Denote by $ u_+ $ the unique positive, symmetric equilibrium of stationary problem

\[
u_{xx} \of{x} + u^2 \of{x} = 0, \qquad u \of{\pm 1} = 0.
\]

The linearisation is hyperbolic and has simple eigenvalues and a one-dimensional analytic fast unstable manifold $ W^u $. The graph of the unstable manifold $ \Upsilon \of{\tau} $ is analytic, see \citep{henry93}.

\begin{lemma} \label{lem:orbit_bounded_hetero}

There exists a $ r > 0 $ such that the positive time flow $ \Phi \of{t, (\tau, \Upsilon \of{\tau}} $, $ t > 0 $ is bounded for $ \tau \in \C \setminus \R_+$ and $ \abs{\tau} < r$.
\end{lemma}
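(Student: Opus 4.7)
I split the argument into three cases: $\tau \in \R_-$, $\Im \tau > 0$, and $\Im \tau < 0$. The first case is immediate and the third reduces to the second by conjugation symmetry, so the main content lies in the upper half-plane.

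For $\tau \in (-r, 0)$ real, the initial condition $\tau \varphi_\mu + \Upsilon(\tau)$ is real and sits on the heteroclinic side of the one-dimensional real fast unstable manifold of $u_+$ (the blow-up side $\tau > 0$ is excluded by hypothesis). The corresponding orbit converges to $u_- \equiv 0$ and is uniformly bounded in $Z$. Since the coefficients of \eqref{eq:nonlinear_heat_quad} are real, $\Phi(t, \overline{u_0}) = \overline{\Phi(t, u_0)}$, so boundedness for $\Im \tau > 0$ automatically gives boundedness for $\Im \tau < 0$.

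Now assume $\Im \tau > 0$. The linearisation $A_+ = \partial_{xx} + 2 u_+$ is a self-adjoint Schr\"odinger operator on $L^2(I)$ with domain $Z$ whose largest eigenvalue $\mu > 0$ is simple, so Krein-Rutman together with Hopf's lemma supplies a real eigenfunction $\varphi_\mu$ that is strictly positive on $(-1,1)$ with $(\varphi_\mu)_x(-1) > 0$ and $(\varphi_\mu)_x(1) < 0$. Because the graph $\Upsilon$ is tangent to the $\varphi_\mu$-axis at $0$, $\Upsilon(\tau) = O(|\tau|^2)$ in $Z$ and hence in $C^1(\bar I)$ by Sobolev embedding. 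Writing $u_0 = v_0 + i w_0 = \tau \varphi_\mu + \Upsilon(\tau)$, the imaginary part
\[
w_0(x) = (\Im \tau)\,\varphi_\mu(x) + O(|\tau|^2)
\]
is strictly positive on $(-1,1)$ with the correct nonvanishing boundary derivatives for all $|\tau| < r$ and $r$ sufficiently small. Thus $u_0$ meets the hypotheses of Lemma \ref{lem:trapped_solution_positive_im}.

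That lemma produces $R > 0$ such that $u_0(I)$ lies inside the solution disk $D_R = \{z \in \C : \Re(z)^2 + (\Im(z) - R)^2 < R^2\}$ of the reaction ODE $\dot U = U^2$. The Dirichlet values $u(\pm 1, t) = 0$ sit on $\partial D_R$, so the initial and boundary data of the system \eqref{eq:system_nonlinear_heat} are contained in the closed convex invariant region $\overline{D_R}$. Since the diffusion does not couple real and imaginary parts, Weinberger's invariant-region maximum principle \citep{weinberger75} forces $u(t,x) \in \overline{D_R}$ for every $x \in I$ and every $t$ in the existence interval, giving $\norm{u(t)}_\infty \leq 2R$. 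Parabolic smoothing of the analytic semigroup generated by $\partial_{xx}$ then upgrades this $L^\infty$-bound to boundedness in $Z$ and to all $t > 0$.

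\textbf{Main obstacle.} The delicate step is verifying the hypothesis of Lemma \ref{lem:trapped_solution_positive_im} on the boundary, where both $v_0$ and $w_0$ vanish: the ratios that govern the trapping radius must be analysed through first-order Taylor expansions, and the required signs of $(w_0)_x(\pm 1)$ come from Hopf positivity of $\varphi_\mu$ combined with the quadratic smallness of $\Upsilon(\tau)$ — this is exactly what dictates the choice of $r$. The radius $R = R(\tau)$ diverges as $\Im \tau \to 0^+$, but the statement only claims boundedness for each fixed $\tau$, so no uniformity in $\tau$ is needed.
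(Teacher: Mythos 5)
Your argument follows the paper's proof essentially verbatim: negative real $\tau$ is handled by comparison with $u_+$ and the heteroclinic structure, complex conjugation reduces to the upper half-plane, and for $\Im\tau>0$ the positivity of $\varphi_\mu$ (Hopf lemma) together with the quadratic order of $\Upsilon$ gives an initial condition with positive imaginary part and positive boundary derivative, so that Lemma \ref{lem:trapped_solution_positive_im} and the invariant-disk maximum principle apply. The only slip is notational — the initial datum is $u_+ + \tau\varphi_\mu + \Upsilon(\tau)$, not $\tau\varphi_\mu + \Upsilon(\tau)$ — but since $u_+$ is real this does not affect the verification of the hypotheses on the imaginary part.
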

\begin{proof}
The proof is based on the ODE results.
\begin{enumerate}[label=(\roman*)]
\item The $L^2$-normalized eigenfunction $ \varphi \of{x} $ to the fast unstable manifold is positive \citep{quittner07}. It satisfies the equation
\[
\varphi_{xx} + 2 u_+ \varphi = \mu \varphi, \; \varphi \of{ \pm 1} = 0, \; \mu > 0.
\]
for some $ \mu > 0 $. Thus by Hopf lemma we obtain 
\[
\min \left \{ \mp \varphi_x \of{\pm 1} \right \} = \delta > 0.
\]
\item Assume first that $ \tau < 0 $. Then $ u_+ \of{x} + \tau \varphi \of{x} + \Upsilon \of{\tau} \leq u_+ \of{x} $ for all $ x \in I $ and $ \abs{\tau} $ small enough. This implies, that the solution converges to zero for $ \tau < 0 $.
\item Consider the initial data $ u_0 \of{\tau} := u_+ + \tau \varphi + \Upsilon \of{\tau} $ with $ \tau \in \C \setminus \R $ and $ \abs{\tau} < r $. The graph $ \Upsilon $ is of quadratic order at zero. This implies, that for small enough $ \tau $ the imaginary part of $ u_+ + \tau \varphi + \Upsilon \of{\tau} $ is positive and has positive boundary derivative.
%\item We have $ (u_0)_{xx} \of{x} + u_0^2 \of{x} \geq 0 $ and also $ u \geq 0 $. Thus we obtain
%\[
%v_t := u_{tt} = v_{xx} + 2 u v 
%\]
%that by the maximum principle also $ v \geq 0 $.
%\item We know that $ u \geq 0 $. Thus by the Hopf boundary point lemma there exists  a $ \delta >0 $ such that $ \abs{u_x \of{-1}} = \abs{ u_x \of{1}} = \delta $.
%\item Solving into complex time gives for small enough $ \varepsilon >0 $
%\[
%u \of{i \varepsilon, x} = u_0 \of{x} + i \varepsilon u_t \of{0,x} + \varepsilon^2 g \of{x}
%\]
\item The claim follows by Lemma \ref{lem:trapped_solution_positive_im}.
\end{enumerate}
\end{proof}

\begin{corollary}
The real time flow on the complex fast unstable manifold stays bounded in the slit disk of the fast unstable manifold.
\end{corollary}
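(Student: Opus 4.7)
The plan is to read this corollary as a direct repackaging of Lemma \ref{lem:orbit_bounded_hetero}. First I would fix the meaning of \emph{slit disk of the fast unstable manifold}: it is the image under $\tau \mapsto u_+ + \tau\varphi + \Upsilon(\tau)$ of the complex slit disk $D := \{\tau \in \C : |\tau| < r\} \setminus [0,r)$ in the one-dimensional tangent space at $u_+$. The slit along the positive real axis is forced, because exactly there the real blow-up orbit constructed in Lemma \ref{lem:boundary_heteroclinic_blow_up} lives, and the corollary cannot be true across it.

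Next I would handle $D$ in two pieces. For $\tau \in (-r,0)$ I would invoke step (ii) of the proof of Lemma \ref{lem:orbit_bounded_hetero}: since $u_+ + \tau\varphi + \Upsilon(\tau) \leq u_+$ pointwise for $|\tau|$ small, a sub/super-solution comparison with the real heteroclinic orbit on $W^u$ gives convergence to zero, hence boundedness by $\|u_+\|_\infty$. For $\tau \in D \setminus \R$, step (iii) of the same proof shows that the initial datum $u_+ + \tau\varphi + \Upsilon(\tau)$ has imaginary part of constant sign and non-degenerate boundary derivative, so Lemma \ref{lem:trapped_solution_positive_im} places it inside a solution disk of the $\dot U = U^2$ flow; the maximum principle for the quadratic reaction-diffusion system \eqref{eq:system_nonlinear_heat} then traps the real-time PDE evolution inside the same disk, and the ODE envelope of Lemma \ref{lem:straight_line_sub_sol} provides the explicit $L^\infty$-bound.

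The only point needing care is that one should not hope for a bound uniform in $\tau$ as $\tau$ approaches the slit $[0,r)$ from either side: the imaginary part of $u_0$ then vanishes, the radius $R^* = \sup_x R(v_0(x), w_0(x))$ of the enclosing solution circle blows up, and the ODE envelope degenerates — which is exactly consistent with the blow-up on the boundary orbit of Lemma \ref{lem:boundary_heteroclinic_blow_up}. Accordingly, the corollary is to be read pointwise in $\tau \in D$, or equivalently uniformly on compact subsets of $D$, which is precisely what the preceding lemma delivers; no new estimate is required.
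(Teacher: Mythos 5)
Your reading is correct: the paper states this corollary without any separate proof, as an immediate geometric restatement of Lemma \ref{lem:orbit_bounded_hetero} (the slit disk being exactly $\{\tau : |\tau|<r\}\setminus\R_+$ covered by that lemma), and your case split into $\tau\in(-r,0)$ and $\tau\in D\setminus\R$ reproduces steps (ii)--(iii) of that lemma's proof. Your added caveat that the bound cannot be uniform up to the slit is a sensible and accurate observation, consistent with the blow-up orbit on the boundary, but no new argument beyond the lemma is needed.
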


%\begin{corollary}
%Assume, that for $ u_0 \in Z $ it holds $ \Delta u_0 + u_0^2 \geq 0 $ and also $ \partial_\eta \vert_{\eta = \pm 1} u_0 \of{x} > 0 $. Then does the complex time flow stay bounded in a slit strip around the positive real axis.
%\end{corollary}

\begin{remark}
From the perspective of Lemma \ref{lem:boundary_heteroclinic_blow_up} we have shown, that the boundaries of the two complex conjugated heteroclinic nests (see Figure \ref{fig:heteroclinic_foliation}) coincide. More precisely it is the positive real blow-up orbit. Furthermore note, that we also have proven, that the every heteroclinic obtained in Lemma \ref{lem:orbit_bounded_hetero} is actually a time $p$ -- path of the single real time heteroclinic orbit.
\end{remark}

\paragraph*{Continuation back to the real axis}

For the nonlinear heat equation one can extend the heteroclinic orbit up to the real axis, but not back onto the real axis as indicated in Figure \ref{fig:heteroclinic_foliation} directly after blow-up. \\
Naturally the question arises what happens to the analytic continuation of the real heteroclinic orbit along time $p$ -- paths. We can show, that even if we are not able to continue the solution back to the real axis immediately after the blow-up, we can continue back to the real axis after some time $ T_1 > 0 $.

In this paragraph we prove that the solution can be continued to the regions $ S_\pm $, see Figure \ref{fig:complex_time_path}.

\begin{figure}[h] 
\centering
\begin{tikzpicture}

%\draw[fill= gray, draw=white, opacity = 0.5] (-1,0) rectangle (5,2);

%\draw[fill= red, draw=red, opacity = 1] (-1,2) rectangle (5,1);

\draw[fill=white, draw=white, postaction={pattern=my north east lines,pattern color=gray}] (-1,1) -- (-1,2) -- (3,2)  -- (3,0) -- (3,1) -- (-1,1);
%\draw[fill= blue, draw=blue, opacity = 1] (-1,0) rectangle (5,1);

\draw[fill=white, draw=white, postaction={pattern=my north west lines,pattern color=gray}] (-1,1) -- (-1,0) -- (3,0)  -- (3,1) -- (-1,1);

\draw[draw=white,GridSize=10pt, postaction={pattern=MyGrid,pattern color=gray}] (3,2) -- (5,2) -- (5,0) -- (3,0) -- (3,2) ;
%\draw[postaction={draw,red,dash pattern= on 3pt off 5pt,dash phase=4pt,thick}]
%[blue,dash pattern= on 3pt off 5pt,thick] (-1,1) -- (1,1);
%
%\draw[postaction={draw,red,dash pattern= on 3pt off 5pt,dash phase=4pt,thick}]
%[blue,dash pattern= on 3pt off 5pt,thick] (3,1) -- (5,1);

%\draw[fill=white, draw = none] (-0.2,1.3) rectangle (0.1,1.8) node[pos=.5] {$S_+$};

%\draw[fill=white, draw = none] (-0.2,0.3) rectangle (0.1,0.8) node[pos=.5] {$S_-$};

\draw (0,1.5) node[fill=white] {$S_+$};
\draw (0,0.5) node[fill=white] {$S_-$};

\draw[->] (-0.5,-0.5) -- (-0.5,2.5) node[scale = 0.7,left] {$\Im \of{t} $};

\draw (1,1) circle [radius=0.07cm] node[fill=white, below] {$T$};
\draw (3,1) circle [radius=0.07cm] node[fill=white, below] {$T_1$};

\draw[->] (-1.5,1) -- (5.5,1) node[scale = 0.7,below] {$\Re \of{t} $};

\draw[line width = 0.08cm, color=white] (1,1) --(3,1);

\draw (1,1) circle [radius=0.07cm];
\draw (3,1) circle [radius=0.07cm];

\draw (-1,2) -- (5,2) node[right, scale = 0.7] {$\delta$};
\draw (-1,0) -- (5,0) node[right, scale = 0.7] {$-\delta$};

\end{tikzpicture}
\caption{Domain of existence of continued solutions} \label{fig:complex_time_path}
\end{figure}
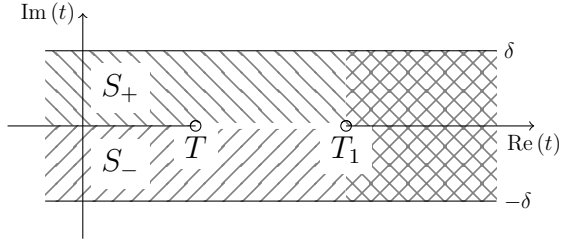

This is already true, due to Lemma \ref{lem:boundary_heteroclinic_blow_up} since it implies, that solutions along time $p$-pathes are heteroclinic orbits. Once the solution is close to zero, which is completely stable, we can tilt the time path to obtain a solution on the real time axis. But in this paragraph, we derive an upper estimate for the time resurrection $ T_1 $.

The next to Lemma give a lower bound on the blow-up rate. The proofs are very similar to the real-valued case in \citep{quittner07}.

\begin{lemma} \label{lem:blow_up_time_initial_data}
If $ \nu_0 := \norm{u_0}_\infty < r $. Then the flow $ \Phi \of{t, u_0} $ is regular along $ t = r e^{i \theta} $ for $ \abs{\theta} < \frac{\pi}{2} $ for at least time $ r = \frac{1}{C^2 \nu_0} $, where $ C > 0 $ is independent of $ \nu_0 $.
\end{lemma}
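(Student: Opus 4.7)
The plan is to reprove a standard local existence bound along complex rays by a Picard/contraction argument for the mild formulation, the only novelty being that the time path is allowed to be complex.

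First, I would fix $\theta$ with $|\theta| < \pi/2$ and parametrize the ray by $t = \rho e^{i\theta}$, $\rho \geq 0$. Since $A = \partial_{xx}$ with Dirichlet conditions on $I=(-1,1)$ generates a bounded analytic semigroup of angle $\pi/2$ on $X = L^2(I,\C)$, and since the embedding $Z \hookrightarrow L^\infty(I,\C)$ holds in one space dimension, there is a constant $C_\theta > 0$ (uniform on any closed subsector $|\arg t| \leq \pi/2 - \varepsilon$) such that $\|e^{zA}\|_{L^\infty\to L^\infty} \leq C_\theta$ for all $z$ on the ray. Rewriting \eqref{eq:nonlinear_heat_quad} in mild form along the ray, the unknown $u(\rho) := \Phi(\rho e^{i\theta},u_0)$ satisfies
\[
u(\rho) \;=\; e^{\rho e^{i\theta} A}u_0 \;+\; e^{i\theta}\int_{0}^{\rho} e^{(\rho-\sigma)e^{i\theta} A}\, u(\sigma)^2\, d\sigma.
\]

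Second, I would set up a contraction on the Banach space $X_R := C([0,R], L^\infty(I,\C))$ with the sup-norm. Denote the right-hand side by $F(u)(\rho)$ and work in the closed ball $B_M := \{\|u\|_{X_R}\leq M\}$. The linear term is bounded by $C_\theta \nu_0$, and since $v \mapsto v^2$ is Lipschitz on $B_M$ with constant $2M$, the nonlinear term is bounded by $C_\theta M^2 R$ and the difference term by $2C_\theta M R \,\|u-v\|_{X_R}$. Choosing $M := 2C_\theta\nu_0$, invariance of $B_M$ under $F$ and strict contractivity both hold as soon as
\[
R \;\leq\; \frac{1}{8\, C_\theta^{\,2}\,\nu_0},
\]
which is of the form $R = 1/(C^2\nu_0)$ asserted in the statement. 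Banach's fixed point theorem then produces the unique mild solution on the whole ray-segment of length $R$.

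Third, I would verify that this fixed point is indeed the analytic continuation of $\Phi(\cdot,u_0)$ along the ray. Analyticity in $t$ of $e^{tA}$ on the open sector $|\arg t|<\pi/2$, together with the analyticity of $u \mapsto u^2$ from $Z$ to $Y$ (and uniqueness in the mild class), implies that $t \mapsto u(t)$ is holomorphic on the open half-disk $\{t: |t|<R,\ \Re t>0\}$ and agrees on $t\in(0,R)$ with the real-time solution provided by \citep{lunardi95, henry93}. By the identity theorem it coincides with the analytic continuation $\Phi(t,u_0)$ along any ray, so $\Phi$ is regular along $t=\rho e^{i\theta}$ up to $\rho = R$.

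The only real subtlety I anticipate is the behavior as $|\theta|\nearrow \pi/2$: the semigroup constant $C_\theta$ degenerates at the boundary of the sector of analyticity, so the constant $C$ in the statement should be understood as depending on $\theta$ (uniformly on closed subsectors), not as a single universal constant. Everything else is a routine application of the contraction principle, using only the $L^\infty$ Lipschitz bound on the quadratic nonlinearity and the bounded analyticity of the heat semigroup.
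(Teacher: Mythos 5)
Your proof is correct, but it takes a genuinely different route from the paper. The paper also starts from the variation of constants formula along the ray and takes sup-norms, but instead of running a contraction argument it derives the scalar integral inequality $\nu(t) \leq C\nu_0 + C\int_0^r \nu(s)^2\,ds$ and compares it with the explicit ODE solution $\eta(r, C^2\nu_0)/C = \bigl(C(1/(C^2\nu_0)-r)\bigr)^{-1}$ of the quadratic equation, using a Gronwall-type positivity argument on the difference $h(r)$ to conclude that $\nu$ stays finite as long as the explicit supersolution does, i.e. up to $r = 1/(C^2\nu_0)$. Your Banach fixed point argument reaches the same form of existence time (with a worse numerical constant, $1/(8C_\theta^2\nu_0)$, which is irrelevant since $C$ is unspecified) and has the advantage of also delivering uniqueness and the identification with the analytic continuation in one package — a point the paper leaves implicit. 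The paper's comparison argument is slightly sharper and fits more naturally with the "lower blow-up rate" reading of the lemma that is used immediately afterwards in Corollary \ref{cor:uniform_bounded_small}, since it exhibits the ODE blow-up solution as the exact envelope. Your caveat about the semigroup constant degenerating as $\abs{\theta}\nearrow \pi/2$ applies equally to the paper's proof (which also invokes boundedness of the semigroup "in the sector") and is worth stating explicitly; neither argument gives a single constant uniform over the full open sector.
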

\begin{proof}
Since the Laplace operator generates an analytic semigroup in the space of continuous functions \citep{lunardi95}, we solve the variation of constants formula
\[
u \of{t} = T \of{t} u_0 + \int_0^z T \of{z-s} u \of{s}^2 ds.
\]
Taking the sup-norms we obtain
\[
\nu \of{t} := \norm{u \of{t}}_\infty \leq C \nu_0 + C \int_0^r  \nu^2 \of{s} ds.
\]

since $ t $ is contained in the sector of the left half-plane. We can compare this inequality to the solution $ \eta \of{t, C^2 \nu_0} / C $. The difference of the solutions satisfies the inequality

\[
h \of{r} := \eta \of{r, C^2 \nu_0}/C - q \of{r} \geq  C \int_0^r  \of{q \of{s} + \eta \of{s,C^2 q_0} / C } h \of{s}  ds,
\]

Since $ h \of{0} > 0 $ we have $ h \of{s} > 0 $ for all $ 0 < s < r $. By Gronwall inequality $ h \of{s} < \infty $ as long as $ q \of{s} + \eta \of{s,C^2 \nu_0}/C < 2 \eta \of{s,C^2 \nu_0}/C < \infty $. But $ \eta \of{s,C^2 \nu_0} / C $ exists up to time $ r = \frac{1}{C^2 \nu_0} $.
\end{proof}

%We can show type i blow-up in our case for real equation. But in general it is not easy to decide whether the solution has type i or type ii blow-up. We show, that our solution can only have type i behaviour. Note, that we can not use the maximum principle to obtain the claim as done in ... But we can use the theory of analytic functions to prove the claim.

The lower blow-up rate estimate gives an estimate on the blow-up time.

\begin{corollary} \label{cor:uniform_bounded_small}
Suppose that $ \abs{t-T}^p \norm{u \of{t - T}}_\infty \leq M < \infty $ for some $ 0 < p < 1 $. Then does the solution $ u \of{t,x} $ stay regular until $ T $.
\end{corollary}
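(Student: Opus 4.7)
The plan is to contrapose the hypothesis against the lower blow-up rate extracted from the preceding Lemma~\ref{lem:blow_up_time_initial_data}. I read the hypothesis as the standard upper blow-up rate statement, namely
\[
\norm{u\of{t}}_\infty \le M \abs{T-t}^{-p}, \qquad 0 < p < 1,
\]
for $t$ approaching $T$ from within the regularity sector of Lemma~\ref{lem:blow_up_time_initial_data}.

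First, I would rephrase Lemma~\ref{lem:blow_up_time_initial_data} as a lower bound on $\norm{u \of{t}}_\infty$. Let $T^{\ast}$ denote the supremum of times up to which $u$ remains regular along the chosen sectorial path. For any admissible $t_0 < T^{\ast}$ we may take $u \of{t_0}$ as new initial data; the lemma then guarantees continued regularity for an additional time $\tfrac{1}{C^{2}\norm{u\of{t_0}}_\infty}$. Hence
\[
T^{\ast} - t_0 \;\ge\; \frac{1}{C^{2}\,\norm{u\of{t_0}}_\infty},
\qquad \text{equivalently} \qquad
\norm{u\of{t_0}}_\infty \;\ge\; \frac{1}{C^{2}\of{T^{\ast}-t_0}}.
\]
This is the standard lower blow-up rate and, by the sectorial version of the lemma, it is valid for $t_0$ approaching $T^{\ast}$ along any path $t_0 = T^{\ast} - r e^{i\theta}$ with $\abs{\theta} < \pi/2$.

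Second, I would argue by contradiction. Suppose $u$ fails to be regular at some time $T^{\ast} \le T$. Combining the lower rate with the hypothesis on the admissible side of $T^{\ast}$ gives
\[
\frac{1}{C^{2}\of{T^{\ast}-t_0}} \;\le\; M\,\of{T-t_0}^{-p}.
\]
If $T^{\ast} = T$, multiplying by $\of{T-t_0}^{p}$ yields $\of{T-t_0}^{1-p} \ge \tfrac{1}{C^{2} M}$, which fails as $t_0 \to T^{-}$ since $1-p > 0$. If $T^{\ast} < T$, then the right-hand side stays bounded by the finite constant $M\of{T-T^{\ast}}^{-p}$ as $t_0 \to (T^{\ast})^{-}$, while the left-hand side diverges; again a contradiction. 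Hence $u$ remains regular up to time $T$.

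The single genuine obstacle is interpreting the displayed hypothesis (the expression $u\of{t-T}$ appears to be a typo) and checking that the lower blow-up rate really applies on the complex-time path that the upper bound controls; once the hypothesis is taken as the sup-norm bound $\norm{u\of{t}}_\infty \le M \abs{T-t}^{-p}$ along a path of regularity, the two inequalities are manifestly incompatible for $p < 1$ and the corollary follows immediately.
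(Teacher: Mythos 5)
Your argument is correct and matches the paper's own proof essentially verbatim: both contrapose the lower existence-time bound of Lemma~\ref{lem:blow_up_time_initial_data} against the assumed bound to obtain $\abs{T-t}^{1-p} \geq \tfrac{1}{C^2 M}$, which fails as $t \to T$ since $p<1$. Your reading of the hypothesis as $\norm{u\of{t}}_\infty \leq M \abs{T-t}^{-p}$ and the extra case distinction $T^\ast < T$ are reasonable refinements but do not change the substance.
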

\begin{proof}
Argue by contradiction. By Lemma \ref{lem:blow_up_time_initial_data} and assumptions, it holds,
\[
\abs{T-t} \geq \frac{1}{C^2 \norm{u \of{T-t}}_\infty} \geq  \frac{\abs{T-t}^{p}}{M C^2 }.
\]
Since $ p < 1 $, this gives a contradiction for $ t $ close enough to $ T$.
\end{proof}

The two Lemmata above give a lower bound on the existence time depending on the initial condition. 
This allows to prove, that the analytic continuations of the blow-up orbit can be continued back to the real axis once they are small enough. We show first that this is not true without the Laplace operator.

\begin{proposition}
Consider the ODE flow of
\begin{align} \label{eq:nonlinear_heat_profile_ode}
\begin{split}
u_t \of{t,x} &= u^2 \of{t,x}, \qquad x \in I := \left(-1, 1 \right) \\
u \of{0,x} &:= u_0 \of{x} \in C^0 \of{I,\R_+}, \; u_0 \not \equiv 0, 
\end{split}
\end{align}
and Dirichlet boundary conditions.
Then the solution to equation \eqref{eq:nonlinear_heat_profile_ode} exists for all $ t \in \C \setminus \left[T, \infty \right)$ and is unbounded on $ \left[T, \infty \right)$, where $ T := \frac{1}{\max_{x \in I} u_0 \of{x}} $.
\end{proposition}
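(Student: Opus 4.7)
The plan is to solve the ODE pointwise in $x$, which makes everything explicit, and then read off both assertions from the resulting formula.

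First I would note that for each fixed $x \in I$, the equation $u_t = u^2$ is a scalar ODE with initial datum $u_0(x) \geq 0$, and by separation of variables (as in formula \eqref{eq:quadratic_ode_explicit}) the solution is
\[
u(t,x) = \frac{u_0(x)}{1 - t\, u_0(x)}.
\]
Set $M := \max_{x \in I} u_0(x) > 0$ (attained since $u_0$ extends continuously to $[-1,1]$ with boundary value zero), so $T = 1/M$. The denominator $1 - t\, u_0(x)$ vanishes precisely when $t = 1/u_0(x)$, and the set of such blow-up times as $x$ ranges over $I$ is $\{1/y : y \in (0,M]\} = [T,\infty)$ by continuity of $u_0$.

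Next I would establish existence and boundedness for $t \in \C \setminus [T,\infty)$. Fix such a $t_0$; I need to show $\inf_{x \in I}|1 - t_0\, u_0(x)| > 0$. Since $u_0$ is continuous on the compact set $[-1,1]$, its image is the compact interval $[0,M]$, so it suffices to show that the continuous function $y \mapsto |1 - t_0 y|$ is strictly positive on $[0,M]$. The zero set of this function in $y$ is $\{1/t_0\}$ if $t_0 \neq 0$, and this zero lies in $[0,M]$ iff $t_0 \in [T,\infty)$, which is excluded by assumption. Hence $|1 - t_0 u_0(x)|$ is bounded below by some $c(t_0) > 0$, and
\[
\|u(t_0,\cdot)\|_\infty \leq M/c(t_0) < \infty.
\]
The same argument applied on any compact $K \subset \C \setminus [T,\infty)$ yields a uniform bound, from which time-analyticity of $t \mapsto u(t,\cdot)$ as an $L^\infty(I)$-valued function follows from pointwise analyticity in $t$ (for each $x$, the map is meromorphic in $t$ with a single pole on $[T,\infty)$).

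Finally, for unboundedness on $[T,\infty)$ I would invoke the intermediate value theorem: for any $t \in [T,\infty)$ the value $1/t$ lies in $(0,1/T] = (0,M]$, and since $u_0$ is continuous with $u_0(\pm 1) = 0$ and achieves the maximum $M$, it takes every value in $[0,M]$; in particular there is $x_t \in I$ with $u_0(x_t) = 1/t$. At this point $1 - t u_0(x_t) = 0$, so $u(t,\cdot)$ is unbounded on $I$. There is no real obstacle here; the only subtlety is the need for $u_0$ to be continuous up to the boundary so that $M$ is attained and the IVT applies, but that is built into the Dirichlet setup $u_0 \in C^0(\bar I)$ with $u_0(\pm 1) = 0$.
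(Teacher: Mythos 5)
Your proposal is correct and follows essentially the same route as the paper: solve the ODE explicitly pointwise in $x$, identify the pointwise blow-up time $T(x) = 1/u_0(x)$, and use the intermediate value theorem to see that the set of singular times sweeps out exactly $[T,\infty)$. You supply more detail than the paper does (the uniform lower bound on $|1 - t\,u_0(x)|$ over compact subsets of $\C \setminus [T,\infty)$ and the resulting time-analyticity), which is a welcome tightening rather than a different argument.
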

\begin{proof}
We can solve equation \eqref{eq:nonlinear_heat_profile_ode} explicitly and calculate the blow-up time for each point $x \in I $,
\[
u \of{t,x} = \eta \of{t,u_0 \of{x}} = \frac{1}{\frac{1}{u_0 \of{x}}-t}.
\]

Thus the blow-up time $ T \of{x} $ is $T \of{x} = \frac{1}{u_0 \of{x}}$.

Note, that $ x \to \partial I $ implies $ T \of{x} \to \infty $ and also $ T = \min_{x \in I} t_b \of{x} = \frac{1}{\max_{x \in I} u_0 \of{x}} $. Now the claim follows by the intermediate value theorem.
\end{proof}

Let us compare the ODE flow \eqref{eq:nonlinear_heat_profile_ode} with the PDE semiflow $ \phi \of{t, u_0} $, for real initial conditions $ 0 \leq u_0 \neq 0 $, but allowing for, both, real and complex times $ t $. We keep Dirichlet conditions in either case. We have seen how the ODE solution $ u \of{t,x} $ must blow up whenever $t$ returns to the real axis after $ T = \frac{1}{\max_{x \in I} u_0 \of{x}} $, no matter which complex detour our time path $t$ might take. Due to the presence of the Laplacian $ u_{xx} $, however, the PDE solution $ u \of{t,x} $ may behave quite different. We have already seen how the Laplacian generates complete blow-up in the real domain, due to infinite propagation speed outwards from the singularity. In the complex time domain, in contrast, the Laplacian prevents quadratic tangencies at the boundary, via the Hopf Lemma. Moreover the initially real spatial profile $ u_0 \of{x} $ is pushed into a sector in the complex upper half plane, for small imaginary times $ t = i \delta $, provided that $ u_0 $ satisfies

\[
(u_0)_{xx} + f \of{u_0} \geq 0.
\]
Immediately afterwards, comparison with a straight line solution $ \eta \of{t, s \exp \of{i \phi}} $ strikes, and traps the solution for all later real times,

\[
t = i \delta + \tau , \qquad \tau \geq 0. 
\]

We formulate this simple argument in the following theorem. The uniform boundedness, and decay, established here will allow us to even return to the real axis itself, boundedly.

Also note, the proof is completely different from Masuda \citep{masuda84}. He used the explicit spatial constant solution to show boundedness on the real axis again. For initial conditions $ u_0 $ that are close to a spatial constant profile, the ODE flow $ \eta \of{t, u_0 \of{x}} $ is also uniformly bounded for $ t \in \C \setminus \left(\frac{1}{\max_{x \in I} u_0 \of{x}}, \frac{1}{\min_{x \in I} u_0 \of{x}} \right) $. In particular, for spatially constant solutions $ \frac{1}{\max_{x \in I} u_0 \of{x}} = \frac{1}{\min_{x \in I} u_0 \of{x}} $ and the solution just blows up at a single time point.  Masuda \citep{masuda84} showed that solutions, which are sufficiently close to a spatially constant solution the Laplacian can not desynchronize the blow-up much for different $ x$. But he has also shown, that time analyticity is immediately destroyed by the Laplacian operator.

\begin{theorem} \label{thm:continuation_back_to_the_real_axis}
Let $ 0 \leq u_0 \of{x} = u_+ \of{x} + w \of{x} $ with $ w_{xx}  \of{x} + w^2 \of{x} \geq 0 $ and $ w \not \equiv 0 $. Then the flow blows up at some finite time $ 0 < T < \infty $ completely. However, there exists a upper and lower spall strip $ S \of{\delta, \left[T, T_1 \right]} $ with $ T_1 < 2 C_0 \max \left\{ \max_{x \in I} \frac{v_0 \of{x}}{w_0 \of{x}}, \norm{u_+}_\infty \right \} $ to which the solution can be extended. The constant $ C_0 > 0 $ depends only on the heat semigroup. In particular, it can be continued after time $ T_1 $ back to the real axis.
\end{theorem}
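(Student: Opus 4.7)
The plan is to deflect the real blow-up solution into the upper (or lower) complex half-plane by a small vertical step $ t = i \delta $, trap the resulting complex profile by the ODE invariant disks of Lemmas~\ref{lem:solution_left_line} and~\ref{lem:trapped_solution_positive_im}, exploit the decay from Lemma~\ref{lem:straight_line_sub_sol}, and return to the real axis using the local existence time of Lemma~\ref{lem:blow_up_time_initial_data}.

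Complete real-time blow-up at some $ 0 < T < \infty $ is standard in this regime. Combining $ (u_+)_{xx} + u_+^2 = 0 $ with the hypothesis $ w_{xx} + w^2 \geq 0 $ gives
\[
(u_0)_{xx} + u_0^2 = \of{w_{xx} + w^2} + 2 u_+ w \geq 0,
\]
so $ u_0 $ is a stationary subsolution, $ u_t \geq 0 $ along the real flow, and since $ u_+ $ is the unique positive steady state and $ w \not\equiv 0 $, complete finite-time blow-up follows as recalled in the introduction.

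For the imaginary kick, fix $ 0 < \tilde\delta \leq \delta $ inside the strip of $ Z $-valued time analyticity of the semigroup and Taylor-expand
\[
u \of{i\tilde\delta, x} = u_0 \of{x} + i\tilde\delta \bigl( (u_0)_{xx}(x) + u_0^2(x) \bigr) + O \of{\tilde\delta^{\,2}},
\]
so that $ u \of{i\tilde\delta, \cdot} =: v_0 + i w_0 $ has $ v_0 \approx u_0 $ and $ w_0 \geq 0 $, $ w_0 \not\equiv 0 $, with strictly positive inward boundary derivatives at $ x = \pm 1 $ by Hopf. Lemmas~\ref{lem:solution_left_line} and~\ref{lem:trapped_solution_positive_im} then supply an angle $ \varphi = \min_x \arctan \of{w_0/v_0} > 0 $ and an ODE solution disk containing $ u \of{i\tilde\delta, \cdot} $, which lies to the left of $ s \mapsto s e^{i \varphi} $. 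On the time $ p $-path $ t = i\tilde\delta + \tau $, equation~\eqref{eq:nonlinear_heat_quad} reduces to the real system~\eqref{eq:system_nonlinear_heat}, whose Weinberger maximum principle confines $ u \of{i\tilde\delta + \tau, \cdot} $ to that disk and to the left of the rotating straight-line solution for every $ \tau > 0 $. Lemma~\ref{lem:straight_line_sub_sol} then produces
\[
\norm{u \of{i\tilde\delta + \tau, \cdot}}_\infty \leq \of{\alpha \beta \tau}^{-1}, \qquad \alpha = \cos \varphi, \; \beta = \sin \varphi,
\]
once $ \tau $ exceeds the initial scale $ 1/(\alpha s_0) $. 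Scanning $ \tilde\delta $ over $ (0, \delta] $ fills the upper spall strip; the lower one follows by complex conjugation.

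To return to the real axis, choose $ \tau_0 $ so that $ \norm{u \of{i\tilde\delta + \tau_0, \cdot}}_\infty $ drops below the threshold $ 1/(\sqrt{2} C^2 \tilde\delta) $ of Lemma~\ref{lem:blow_up_time_initial_data}; beyond that point the forward flow exists on the disk of radius $ \sqrt{2}\tilde\delta $ in the sector $ \abs{\arg \of{t - i\tilde\delta - \tau_0}} < \pi/2 $, so one may tilt the time path downward to cross the real axis at some $ T_1 \leq \tau_0 + \tilde\delta $. Using $ \alpha\beta \sim \varphi \sim 1/\max_x \of{v_0/w_0} $ gives $ \tau_0 \lesssim \tilde\delta \max_x \of{v_0/w_0} $, and the $ \tilde\delta $-factor is then absorbed together with $ C^2 $ and the semigroup constants into the universal $ C_0 $. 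The second term $ \norm{u_+}_\infty $ in the bound arises from the initial-scale threshold $ 1/(\alpha s_0) $ in Lemma~\ref{lem:straight_line_sub_sol}, which is proportional to the initial amplitude $ \norm{u_0}_\infty \sim \norm{u_+}_\infty $: up to that time one uses only the trivial boundedness inside the disk. The main obstacle I foresee is keeping $ \varphi $ uniformly bounded below $ \pi/2 $ and above $ 0 $ near $ x = \pm 1 $, where both $ v_0 $ and $ w_0 $ vanish; this requires a quantitative boundary expansion in the spirit of Lemma~\ref{lem:solution_left_line} in order to extract the explicit factor $ \max_x v_0/w_0 $ rather than only a qualitative lower bound.
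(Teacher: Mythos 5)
Your proposal follows essentially the same route as the paper's proof: deflect the real orbit into the upper half-plane by a small imaginary time step, use the Hopf lemma to obtain a positive imaginary part with positive boundary derivative, trap the profile via Lemmas \ref{lem:solution_left_line} and \ref{lem:trapped_solution_positive_im} and the maximum principle along time $p$--paths, extract decay from Lemma \ref{lem:straight_line_sub_sol}, and return to the real axis along a slanted path using the local existence time of Lemma \ref{lem:blow_up_time_initial_data}. (The paper expands around a later time $t_0$ at which $u_t \of{t_0,\cdot} \geq 0$ is produced by the maximum principle, rather than around $t=0$; this is the same mechanism, and your identification of the two terms in the bound for $T_1$ matches the paper's.)

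The one step you must repair is the $\tilde\delta$--scaling of the angle. From $u \of{i\tilde\delta,\cdot} = u_0 + i \tilde\delta\, u_t \of{0,\cdot} + O \of{\tilde\delta^2}$ one gets $\varphi \sim \tilde\delta \min_x \of{w_0/v_0}$, so the angle degenerates linearly as $\tilde\delta \to 0$; it is \emph{not} $\sim 1/\max_x \of{v_0/w_0}$ uniformly in $\tilde\delta$. With the correct scaling, $\alpha\beta \sim \tilde\delta \min_x \of{w_0/v_0}$, and the requirement $\of{\alpha\beta\tau_0}^{-1} < 1/\of{\sqrt{2}\,C^2\tilde\delta}$ forces $\tau_0 \gtrsim C^2 \max_x \of{v_0/w_0}$: the two factors of $\tilde\delta$ cancel exactly, and this cancellation is precisely what makes $T_1$ uniform over all deflection heights $0 < \tilde\delta \leq \delta$, so that a single resurrection time covers the whole spall strip. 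Your version, $\tau_0 \lesssim \tilde\delta \max_x \of{v_0/w_0}$ with the leftover $\tilde\delta$ ``absorbed into the universal $C_0$,'' is not tenable: $\tilde\delta$ is a free parameter, not a semigroup constant, and taken literally your bound would send $T_1 \to 0$ as $\tilde\delta \to 0$, contradicting complete blow-up on the real axis. The quantitative boundary expansion you flag as a worry at the end is exactly the content of Lemma \ref{lem:solution_left_line} (both $v_0$ and $w_0$ vanish at $x = \pm 1$ but their ratio is controlled by the Hopf boundary derivatives), so once the correct $\tilde\delta$--scaling is inserted the argument closes as in the paper.
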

\begin{proof}
By the maximum principle we can assume that there exists a time $ 0 < t_0 < T $ such that $ u_t \of{t_0, x} \geq 0 $. By the Hopf Lemma $u_t \of{t_0, x}$ has a positive boundary derivative, i.e.

\[
\pm u_{tx} \of{t_0, \pm 1} > c_0 > 0.
\]

We can expand the function around $ t = t_0 + \tau $ with respect to $ \tau \in \R$, $ \abs{\tau} < \varepsilon $,

\[
u \of{t_0 + i \tau, x} = u \of{t_0, x} + i \tau u_t \of{t_0,x} - \tau^2 g \of{\tau, x} .
\]

where $ g \of{\tau, x} $ is bounded and differentiable with respect to $ x$. The imaginary part of the boundary derivative is

\[
\pm \Im \of{u_x \of{t_0 + \tau, \pm 1}} =  \pm \tau u_{tx} \of{t_0, \pm 1} \mp \tau^2 \Im \of{g_x \of{\tau, \pm 1}} > c_0 \tau + M \tau^2.
\]

Choosing $ \varepsilon > 0 $ small enough, implies that the imaginary part of the boundary derivative is positive. Thus, we can apply Lemma \ref{lem:trapped_solution_positive_im} to show that the solution exists for any positive $ \tilde t \in \R_+ $, i.e. 
\[
\norm{u \of{t_0 + i \tau + \tilde t, \cdot} }_\infty \leq M < \infty .
\]

From Lemma \ref{lem:solution_left_line} we know, that the solution function $ u \of{t_0 + i \tau} $ is to the left of a straight line of angle $ \varphi > C \tau $ for $ C =  \min_{x \in I} \frac{w_0 \of{x}}{2 v_0 \of{x}} $. Furthermore it is contained in a solution disk to $ s_0 e^{i \varphi} $ with $ s_0 < \norm{u \of{t_0 + i \tau, \cdot} }_\infty < 2 \norm{u_0}_\infty $ for $ \tau $ and $ t_0 $ small enough.

From Lemma \ref{lem:straight_line_sub_sol} the solution satisfies the following bound

\[
\norm{u \of{t_0 + i \tau + \tilde t, \cdot} }_\infty \leq \frac{1}{\tilde t \cos \varphi \sin \varphi } \leq \frac{1+\varepsilon}{C \tilde t \tau}.
\]

for $ \tilde t > 2 \norm{u_0}_\infty $ and any $ \varepsilon > 0 $, if one chooses $ \tau $ small enough.

Lemma \ref{lem:straight_line_sub_sol} guarantees the existence of solutions, when solving along a slanted time line $ t = (1-i) r / \sqrt{2} $, if the sup-norm of the initial condition is less than, i.e. if $ \norm{u_0}_\infty < \frac{1}{\sqrt{2} C_0 r} $. Thus, we can solve back to the real axis, boundedly, if  

\[
 \frac{\sqrt{2} \tau}{1+\varepsilon} <  C \tilde t \tau \Rightarrow  \frac{\sqrt{2}}{1+\varepsilon} < C \tilde t.
\]

This implies that we need to wait for time 

\[
T_1 = 2 C_0 \max \left\{ \max_{x \in I} \frac{v_0 \of{x}}{w_0 \of{x}}, \norm{u_+}_\infty \right \} < \infty,
\] 

until we can return back to the real axis. The solution exists on an upper and lower spall strip

\[
S_\pm \of{ \delta, \left[T, T_1 \right]}.
\]

\end{proof}

\begin{remark}
We can choose in particular $ w = \varphi $, where $ \varphi $ is the first eigenfunction at $ u_+ $.
\end{remark}

This time estimate is unfortunately far from optimal. One might expect continuation to spall strip where $ T = T_1$, i.e. the real blow up happens at an isolated time point in the complex plane. At the present we are not able to resolve the question.

\paragraph*{Description of continued solutions}

Using the Cauchy formula we can prove that blow-up solutions on the fast unstable manifold can not coincide after blow up again. 

\begin{figure}[h] 
\begin{subfigure}[t]{.5\textwidth}  
\centering
\begin{tikzpicture}

%\draw[fill= gray, draw=white, opacity = 0.5] (-1,0) rectangle (5,2);

%\draw[fill= red, draw=red, opacity = 1] (-1,2) rectangle (5,1);
%\draw[fill= blue, draw=blue, opacity = 1] (-1,0) rectangle (5,1);
%
%\draw[postaction={draw,red,dash pattern= on 3pt off 5pt,dash phase=4pt,thick}]
%[blue,dash pattern= on 3pt off 5pt,thick] (-1,1) -- (1,1);
%
%\draw[postaction={draw,red,dash pattern= on 3pt off 5pt,dash phase=4pt,thick}]
%[blue,dash pattern= on 3pt off 5pt,thick] (3,1) -- (5,1);

\draw (0,1.5) node {$\Gamma_+$};
\draw (0,0.5) node {$\Gamma_-$};

\draw[very thick, ->,color=blue] (0.5,1) -- (0.5,0.4)--(3.5,0.4) -- (3.5,1);
\draw[->,very thick,color=red] (0.5,1) -- (0.5,1.6)--(3.5,1.6) -- (3.5,1);

\draw[->] (-0.5,-0.5) -- (-0.5,2.5) node[scale = 0.7,left] {$\Im \of{t} $};

\draw[->] (-1.5,1) -- (5.5,1) node[scale = 0.7,below] {$\Re \of{t} $};

%\draw[color=white] (1,1) --(3,1);

\draw[fill] (1,1) circle [radius=0.05cm] node[below] {$T$};
\draw[fill] (3,1) circle [radius=0.05cm] node[below] {$T_1$};

\end{tikzpicture}
\caption{Time path}
\end{subfigure}
\begin{subfigure}[t]{.5\textwidth}  
\centering
\begin{tikzpicture}

%\draw (0,1.5) node {$\Gamma_+$};
%\draw (0,0.5) node {$\Gamma_-$};

%\draw[->] (-0.5,-0.5) -- (-0.5,2.5) node[scale = 0.7,left] {$\Im \of{P_{su} u} $};

%\draw[->] (-3.5,1) -- (1.5,1) node[scale = 0.7,below] {$\Re \of{P_{su} u} $};

\draw [domain=0:180, color=red, thick] plot ({0.7*cos(\x)-0.5}, {0.7*sin(\x)+1});
\draw [->, domain=0:120, color=red, thick] plot ({0.7*cos(\x)-3}, {0.7*sin(\x)+1});
\draw[color = red, thick] (-1.2,1.01) -- (-2.3,1.01);

\draw [domain=0:180, color=blue, thick] plot ({0.7*cos(\x)-0.5}, {-0.7*sin(\x)+1});
\draw [->, domain=0:120, color=blue, thick] plot ({0.7*cos(\x)-3}, {-0.7*sin(\x)+1});
\draw[color = blue, thick] (-1.2,0.99) -- (-2.3,0.99);

\draw[fill] (0.2,1) circle [radius=0.05cm] node[right] {$u_0$};

\draw[fill] (-0.5,1) circle [radius=0.05cm] node[below] {$u_+$};
\draw[fill] (-3,1) circle [radius=0.05cm] node[below] {$0$};

\end{tikzpicture}
\caption{Path in the fast unstable manifold}
\end{subfigure}
\caption{Time path and solution in the local complex tangent space} \label{fig:complex_time_path_continuation}
\end{figure}
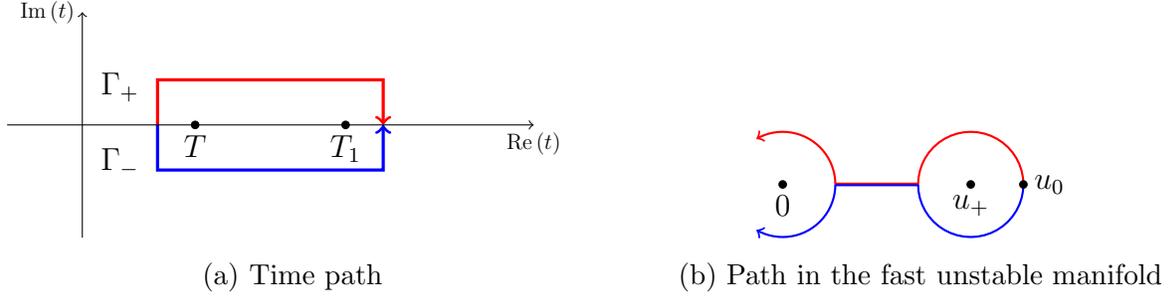

The main idea is depicted in Figure \ref{fig:complex_time_path_continuation}. Choosing the time paths $ \Gamma_\pm $, the path the solution $ u \of{\Gamma_\pm,x} $ traces in the unstable manifold is shown on the right. The argument, why the solution does not close up again, is that the eigenvalues at zero and $ u_+ $ are not the same. Suppose, that the local flow  around zero and $ u_+ $ is one-dimensional and purely linear with eigenvalues $ \mu_0 $ and $ \mu_1 $. To pass from $ u_0 $ to the other side of the equilibrium $ u_+ $ would then take time $ t_0 = \frac{i \pi}{\mu_1} $, whereas it would take time $ t_1 = \frac{i \pi}{\mu_0} $ at zero. Thus the solutions along $ \Gamma_\pm $ can only coincide if $ t_0 = t_1 + 2 n t_1 $ for some $ n \in \N $. 

\begin{lemma} \label{lem:eigenvalue_unstable_eq}
The eigenvalue $ \mu $ of the linearization at $ u_+ \of{x} $ satisfies $ \frac{\pi^2}{4} < \mu $.
\end{lemma}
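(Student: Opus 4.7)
The plan is to use the variational characterisation of the largest eigenvalue $\mu$ of the self-adjoint operator $L = \partial_{xx} + 2u_+$ on $H_0^1(I)$, namely
\[
\mu = \sup_{\varphi \in H_0^1(I),\, \varphi \neq 0} \frac{2\int_I u_+ \varphi^2\, dx - \int_I \varphi_x^2\, dx}{\int_I \varphi^2\, dx},
\]
and to test this Rayleigh quotient with the positive equilibrium $u_+$ itself, which lies in $H_0^1(I)$ by construction.

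Plugging in $\varphi = u_+$ gives $\mu \geq \bigl(2\int u_+^3 - \int u_{+,x}^2\bigr)/\int u_+^2$. The point is that the equation $u_{+,xx} + u_+^2 = 0$ satisfied by $u_+$ and an integration by parts (the boundary terms vanish thanks to the Dirichlet condition) collapse this into a very clean expression:
\[
\int_I u_{+,x}^2\, dx = -\int_I u_+\, u_{+,xx}\, dx = \int_I u_+^3\, dx.
\]
Substituting back, the estimate simplifies to
\[
\mu \;\geq\; \frac{\int_I u_+^3\, dx}{\int_I u_+^2\, dx} \;=\; \frac{\int_I u_{+,x}^2\, dx}{\int_I u_+^2\, dx}.
\]

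The final step is to recognise the last quotient as the Rayleigh quotient of $u_+$ for the Dirichlet Laplacian $-\partial_{xx}$ on $I = (-1,1)$, whose principal eigenvalue is $\pi^2/4$ with unique (up to scaling) ground state $\cos(\pi x/2)$. Hence
\[
\frac{\int_I u_{+,x}^2\, dx}{\int_I u_+^2\, dx} \;\geq\; \frac{\pi^2}{4},
\]
with equality only if $u_+$ is a scalar multiple of $\cos(\pi x/2)$. But $u_+$ satisfies the nonlinear equation $u_{+,xx} = -u_+^2$, which no nonzero multiple of $\cos(\pi x/2)$ can satisfy (the ratio $u_{+,xx}/u_+$ would then be a negative constant, forcing $u_+$ itself to be constant, contradicting the Dirichlet boundary conditions). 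Thus the inequality is strict and $\mu > \pi^2/4$, as desired.

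I do not expect a serious obstacle here; the only thing to be careful about is justifying that $u_+ \in H_0^1(I)$ is an admissible test function (immediate from the regularity of the equilibrium) and that strict, rather than merely weak, inequality holds in the Poincaré comparison, which follows from the characterisation of the equality case in the first Dirichlet eigenvalue problem on $(-1,1)$.
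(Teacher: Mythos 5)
Your proposal is correct and follows essentially the same route as the paper: test the Rayleigh quotient of $L=\partial_{xx}+2u_+$ with $u_+$ itself, use the equation and integration by parts to reduce to $\mu \geq \norm{(u_+)_x}_{L^2}^2/\norm{u_+}_{L^2}^2$, and then compare with the first Dirichlet eigenvalue $\pi^2/4$, with strictness from the fact that $u_+$ cannot be a multiple of the ground state. The only cosmetic difference is that the paper carries out the Poincar\'e step via a sine-series expansion and Plancherel, while you invoke the variational characterisation of the first Dirichlet eigenvalue directly; your handling of the equality case (for an arbitrary scalar multiple of the ground state) is in fact slightly cleaner than the paper's pointwise check at $x=0$.
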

\begin{proof}
We want to show that the largest eigenvalue of the operator
\[
L := \partial_{xx} + 2 u_+ \of{x} 
\]
is not equal to the absolute value of the first eigenvalue at zero. For that reason, we follow closely the discussion on Sturm-Liouville problems in \citep{brunt06}.
For Sturm - Liouville problems, we can express the largest eigenvalue by the Rayley quotient. 

\[
R \of{Q} := \frac{\int_{-1}^1 -Q_x^2 \of{x} + 2 u_+ \of{x} Q^2 \of{x} dx}{\norm{Q}_{L^2}^2}.
\]

By \citep{brunt06} the eigenvalue $ \mu $ satisfies the  following variational principle

\[
\mu := \max_{Q \in H^1} R \of{Q}.
\]

%The Rayley quotient is is obtained by multiplying the equation with $ Q $ to obtain

%\[
%\skalar{LQ,Q}_{L^2} - \mu Q^2 = Q_{xx} Q + 2 u^* Q^2 - \mu %Q^2= 0
%\]

%Integration by parts using Dirichlet boundary conditions we %obtain
%\[
%\int_{-1}^1 -Q_x^2 \of{x} + 2 u^* \of{x} Q^2 \of{x} - \mu  %Q^2 \of{x} dx = 0
%\]
%
%Thus taking the quotient we obtain
%

%The largest eigenvalue is the maximum of $R \of{Q} $ in the space $ H^1 $.
Thus, we can test with any function to obtain a lower bound of $ \mu $. Taking $ Q = u_+$ we obtain

\begin{align*}
&R \of{u_+} = \frac{\int_{-1}^1 -((u_+)_x)^2 \of{x} + 2 (u_+)^3 \of{x} dx}{\norm{u_+}_{L^2}^2} =  \frac{\int_{-1}^1 -((u_+)_x)^2 - 2 (u_+)_{xx} \of{x} (u_+) \of{x} dx}{\norm{u_+}_{L^2}^2} = \frac{\norm{ (u_+)_x}_{L^2}}{\norm{u_+}_{L^2}^2} \\
& \leq \mu.
\end{align*}

This yields the inequality

\begin{equation} \label{eq:mu_auxiliary_iq}
\norm{(u_+)_x}_{L^2}^2 \leq \mu \norm{u_+}_{L^2}^2.
\end{equation}

We can expand $ (u_+)_x \of{x} := \sum_{n=0}^\infty a_n e_n \of{x} $ where $ e_n \of{x} := \sin \of{\frac{n \pi}{2} \of{x+1}} $ is the eigenbasis at zero. By Plancherel we can rewrite inequality \eqref{eq:mu_auxiliary_iq} as 

\[
\sum_{n=1}^\infty \frac{n^2 \pi^2}{4} a_n^2 \leq \mu \sum_{n=1}^\infty a_n^2,
\]

for 
\[
u_+ = \sum_{n=1}^\infty a_n e_n,
\]

If, it holds that $ u_+ \of{x} \neq \sin \of{\frac{\pi}{2} \of{x+1}} $, also $  \frac{\pi^2}{4} < \mu $ holds. As simple calculation yields

\[
-\frac{\pi^2}{4} \sin \of{\frac{\pi}{2} \of{x+1}} + \sin \of{\frac{\pi}{2} \of{x+1}}^2 \neq 0,
\]
for $ x = 0 $.
\end{proof}

The previous lemma allows to show that the blow-up singularity is indeed a branch point of the analytic continued solutions. We summarize the results of the chapter in the following theorem, which was already quoted in the introduction.

\begin{theorem}
There exists a $ \delta > 0 $ such that the time analytic continuations of the real blow up orbit on the fast unstable manifold of $u_+ $, i.e. $ \Phi \of{t, (\tau, \Upsilon \of{\tau}} $, $ 0 < \tau < \delta $ exists and has the following properties:
\begin{enumerate}[label=(\roman*)]
\item It blows-up completely at time $ T$.
\item It can be continued to upper and lower spall strips $S_\pm \of{\delta, \left[ T, T_1 \right] }$ for 
\[
T_1 < 2 C_0 \max \left\{ \max_{x \in I} \frac{v_0 \of{x}}{\varphi \of{x}}, \norm{u_+}_\infty \right \}.
\]
The constant $ C_0 > 0 $ depends only on the heat semigroup.
\item The upper and lower time path continuations do not coincide after $ T_1 > 0 $
\end{enumerate}
\end{theorem}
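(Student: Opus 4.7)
I would split the proof along the three assertions, relying mainly on the machinery already assembled and concentrating the new work on (iii). Assertion (i) is essentially the content of Lemma \ref{lem:boundary_heteroclinic_blow_up}: the positive real orbit $\Phi(t,(\tau,\Upsilon(\tau)))$, $\tau > 0$, is a boundary orbit of the heteroclinic nest $H$, and boundary orbits were shown there to be finite-time blow-up orbits; completeness of the blow-up in one space dimension with quadratic nonlinearity and Dirichlet data is a well-known fact recorded in \citep{quittner07}. For assertion (ii), I would apply Theorem \ref{thm:continuation_back_to_the_real_axis} to the initial datum $u_0 = u_+ + \tau\varphi$ for small $\tau > 0$. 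Using $\varphi_{xx} + 2u_+\varphi = \mu\varphi$ one verifies the positivity of $u_t|_{t=0} = \tau\mu\varphi + \tau^2\varphi^2$, which is the hypothesis needed to start the argument in the proof of Theorem \ref{thm:continuation_back_to_the_real_axis}. The quantitative bound on $T_1$ reads off directly from that proof, the leading order of the imaginary part of $u(t_0 + i\tau,\cdot)$ being proportional to $\tau\mu\varphi$, which produces the $\varphi$ in the denominator of the stated bound.

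For assertion (iii) I argue by contradiction. Suppose the upper and lower continuations along $S_+$ and $S_-$ agreed at some real time $t_\star > T_1$. Concatenating the upper path from $0$ to $t_\star$ with the reverse of the lower path yields a nontrivial closed complex-time loop $\gamma$ along which the flow returns to its initial value on the fast unstable manifold. Projecting onto the one-dimensional fast unstable coordinate $q$ and separating variables as in the proof of Lemma \ref{lem:boundary_heteroclinic_blow_up}, each elementary loop encircling the source $u_+$ once in complex time has period
\[
\oint \frac{d\tau}{\mu\tau + \widetilde f(\tau,\Upsilon(\tau))} \,=\, \frac{2\pi i}{\mu}
\]
by Cauchy's residue theorem. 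Near the sink $u \equiv 0$ the heteroclinic approaches tangent to the first Dirichlet eigenfunction with eigenvalue $-\lambda_1 = -\pi^2/4$, and the analogous elementary monodromy loop contributes the period $2\pi i/\lambda_1$. A loop $\gamma$ compatible with the single encirclement of the blow-up singularity distinguishing $S_+$ from $S_-$ then forces the resonance relation $\pi/\mu = (2n+1)\pi/\lambda_1$ for some integer $n \geq 0$, i.e. $\mu = \lambda_1/(2n+1)$. This is ruled out by Lemma \ref{lem:eigenvalue_unstable_eq}, which gives $\mu > \pi^2/4 = \lambda_1$.

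The main obstacle in (iii) is making rigorous the monodromy contribution near the sink. Near $u_+$ the fast unstable manifold is one-dimensional and analytic, so the Cauchy residue computation above is literal. Near $u \equiv 0$, however, the stable manifold is infinite-dimensional and one must show that the complex-time holonomy of the full PDE flow is dictated by the leading stable mode alone, contributions from faster-decaying modes being negligible to leading order. I would handle this by combining an asymptotic expansion of the heteroclinic tangent to the first eigenfunction with a Vitali-type uniqueness argument in the spirit of Lemma \ref{lem:unbounded_solutions_omega_limit}, which lifts the one-dimensional residue calculation to the full infinite-dimensional analytic semiflow and thereby turns the informal sketch preceding the theorem into a genuine proof.
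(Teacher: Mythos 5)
Your proposal follows essentially the same route as the paper: (i) is quoted from the complete--blow-up literature, (ii) invokes Theorem \ref{thm:continuation_back_to_the_real_axis} with $w=\tau\varphi$ (and your computation $u_t\vert_{t=0}=\tau\mu\varphi+\tau^2\varphi^2\ge 0$ is exactly what the paper's proof actually uses), and (iii) compares the residue periods $2\pi i/\mu$ at $u_+$ and $2\pi i/\lambda_1$ at $0$ and excludes the resonance $\mu=\lambda_1/(2n+1)$ via Lemma \ref{lem:eigenvalue_unstable_eq}. The only differences are presentational: you package (iii) as a closed-loop monodromy contradiction, and you explicitly flag the infinite-dimensional holonomy near the sink as the step still needing justification --- a gap the paper's own proof passes over with the phrase that the passage time at $u_-$ is ``close to'' $4i/\pi$.
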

\begin{proof}
The first claim is already known. We have already shown the second claim. 
The idea to prove the third claim is to choose a complex time path as indicated in the Figure \ref{fig:complex_time_path_continuation} and to show that the solution after continuation along path $\Gamma_+ $ and $ \Gamma_- $ does not coincide. 

Since the fast unstable manifold is analytic, we have can consider the reduced equation

\begin{equation} \label{eq:reduced_equation_fast_unstable}
\dot q = \mu q + \tilde f \of{q} , \qquad q \of{0} = q_0,
\end{equation}

where $ \tilde f \of{u} := P_{su} u \of{q, \Upsilon \of{q}}$ is an analytic function vanishing of quadratic order. Since $ \tilde f $ respects the real axis, real $ q_0 $ implies that the solution $ q \of{t} $ is real for real time $ t $. Take now $ q_0 > 0 $. The solution to the nonlinear heat equation with initial condition

\[
u_0 = u_+ + q_0 \varphi + \Upsilon \of{q_0}.
\]

blows-up.

The time needed to pass from the right side of $u_+ $ to the left side of $ u_+$ (see Figure \ref{fig:complex_time_path_continuation}) is $ t = \frac{i \pi }{\mu} $.
 
This is due to the Cauchy formula by separation of variables of the reduced equation \eqref{eq:reduced_equation_fast_unstable}. By Cauchy residue theorem, the total time needed to go around the stationary solution $ u_+ $ is

\[
2 t = \oint \frac{1}{\mu q + \tilde f \of{q}} dq = \frac{2 \pi i}{ \mu}.
\]
The factor two is due to conjugation symmetry. Furthermore the heteroclinic orbit converges to the first eigenfunction at zero, since the heteroclinic orbit does not change sign. 

Similarly as in the fast unstable manifold we prove that the time needed to pass from right to left around $ u_-$ in the first eigenfuction is close to

\[
t = \frac{4 i}{\pi}.
\]

But from Lemma \ref{lem:eigenvalue_unstable_eq} we know that  $ \frac{\pi^2}{4} < \mu $ which implies, that the solution is not real. In particular the solution is not real when going first for time $ \frac{i \pi }{\mu} $ and then along the real heteroclinic orbit for time $ t > T_1 $, such that the reduced equation on the slow stable manifold holds and then for time $ \frac{- i \pi }{\mu} $.
\end{proof}

\paragraph*{Upper blow-up rate estimate}

We prove a geometric characterization of blow-up points: If there is blow-up at $ t= 0 $ and $ x = 0 $, the image of the function $ \D \mapsto u \of{t+i, 0} $ must cover a half-plane in the complex plane, see Figure \ref{fig:complex_time_disk}. In particular, the image can not be contained in any sector of opening less than $\pi$. This surprising result follows from analytic functions theory. Since we already now, that the solution on the fast unstable manifold is contained in the upper half-plane by Lemma \ref{lem:orbit_bounded_hetero}, we also have an upper estimate on the blow-up rate for complex time. Note, that in contrast to lower estimates upper blow-up rate estimates are more difficult to obtain, and is not clear how to transfer the proofs e.g. in \citep{quittner07} from the real to the complex case. As already mentioned in the introduction, upper blow-up rate estimates allow for rescaled coordinates. In rescaled coordinates, the blow-up point becomes an equilibrium.

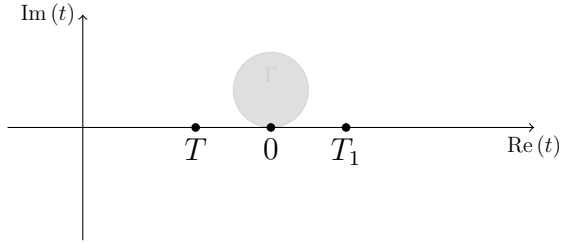
\begin{figure}[h]
\centering
\begin{tikzpicture}

%\draw[fill= gray, draw=white, opacity = 0.5] (-1,0) rectangle (5,2);
%
%\draw[fill= red, draw=red, opacity = 1] (-1,2) rectangle (5,1);
%\draw[fill= blue, draw=blue, opacity = 1] (-1,0) rectangle (5,1);

%\draw[postaction={draw,red,dash pattern= on 3pt off 5pt,dash phase=4pt,thick}]
%[blue,dash pattern= on 3pt off 5pt,thick] (-1,1) -- (1,1);
%
%\draw[postaction={draw,red,dash pattern= on 3pt off 5pt,dash phase=4pt,thick}]
%[blue,dash pattern= on 3pt off 5pt,thick] (3,1) -- (5,1);

%\draw[fill=lightgray, draw=lightgray, opacity = 1] (2,1)--(1.7,1.5) -- (2.3,1.5) -- (2,1) node[above, scale = 0.7] {$\Gamma$};

\draw[fill, lightgray, opacity=0.5] (2,1.5) circle [radius=0.5cm] node[above, scale = 0.7] {$\Gamma$};

%\draw (0,1.5) node {$S_+$};
%\draw (0,0.5) node {$S_-$};

\draw[->] (-0.5,-0.5) -- (-0.5,2.5) node[scale = 0.7,left] {$\Im \of{t} $};

\draw[->] (-1.5,1) -- (5.5,1) node[scale = 0.7,below] {$\Re \of{t} $};

%\draw[color=white] (1,1) --(3,1);

\draw[fill] (1,1) circle [radius=0.05cm] node[below] {$T$};
\draw[fill] (3,1) circle [radius=0.05cm] node[below] {$T_1$};
\draw[fill] (2,1) circle [radius=0.05cm] node[below] {$0$};

\end{tikzpicture}
\caption{Complex time disk attached to the blow-up point} \label{fig:complex_time_disk}
\end{figure}

\begin{lemma} \label{lem:upper_blow_rate_estimate}
Consider the solution constructed in Theorem \ref{thm:continuation_back_to_the_real_axis}. 
Then it holds $ \abs{u \of{t,x}} \leq \frac{M}{\Im \of{t}} $ for some $ M > 0 $ independent of $ x $. The constant $ M > 0 $ is related to the height $ \delta $  of the time strip in which the solution exists.
\end{lemma}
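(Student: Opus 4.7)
The estimate $|u(t,x)| \leq M/\Im(t)$ is essentially already encoded in the construction of the extended solution, so my plan is to extract it quantitatively rather than to build a new complex-analytic machinery. Every point $t$ in the upper spall strip $S_+(\delta,[T,T_1])$ can, after the construction underlying Theorem \ref{thm:continuation_back_to_the_real_axis}, be written as $t = t_0 + i\tau + \tilde t$, where $t_0 < T$ is a fixed time slightly before blow-up, $\tau = \Im(t) \in (0,\delta]$, and $\tilde t = \Re(t) - t_0 \geq 0$. Combining the sector estimate of Lemma \ref{lem:solution_left_line} (which yields an angle $\varphi \geq C_1\tau$ with $C_1$ determined by the spatial boundary derivatives of $\Re u(t_0,\cdot)$ and $\Im u(t_0,\cdot)$) with the straight-line bound of Lemma \ref{lem:straight_line_sub_sol} then gives, for $\tilde t$ in the asymptotic regime,
\[
\|u(t_0 + i\tau + \tilde t, \cdot)\|_\infty \;\leq\; \frac{1}{\tilde t\,\cos\varphi\sin\varphi} \;\leq\; \frac{C_2}{\tilde t\,\Im(t)}.
\]

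First I would fix $\tilde t_0 > 0$ large enough so that the second branch of Lemma \ref{lem:straight_line_sub_sol} applies (this $\tilde t_0$ is essentially $2\|u(t_0,\cdot)\|_\infty$). Then for every $t$ in the post-resurrection portion $\Re(t) \geq t_0 + \tilde t_0$ of the spall strip the displayed inequality immediately yields $|u(t,x)| \leq M_1/\Im(t)$ with $M_1 = C_2/\tilde t_0$. Next, for the complementary portion of $S_+$ --- either $\Re(t) \leq T$, where the solution is smooth and uniformly bounded by standard analytic-semigroup estimates, or $T < \Re(t) < t_0 + \tilde t_0$, where one is in a bounded waiting region --- I would use that away from the slit $[T,T_1]$ the analytic function $u(\cdot,x)$ is locally bounded by some $M_0$, and the trivial inequality $M_0 \leq (M_0\delta)/\Im(t)$ absorbs this into the required form. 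Setting $M := \max(M_1, M_0\delta)$ unifies the regimes, and the same argument applies symmetrically on the lower spall strip via conjugation.

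The main technical obstacle is making the various constants track the strip height $\delta$ uniformly in $x \in I$. Uniformity in $x$ is built into Lemmas \ref{lem:solution_left_line}--\ref{lem:straight_line_sub_sol}, which all control $\|\cdot\|_\infty$; the $\delta$-dependence enters indirectly through Lemma \ref{lem:orbit_bounded_hetero}, since the radius in which the complex-time flow remains inside the invariant ODE disk of Lemma \ref{lem:trapped_solution_positive_im} must be at least of order $\delta$, and this in turn constrains how close to $T$ one may take $t_0$, ultimately feeding into both $C_1$ and $\tilde t_0$. A conceptually cleaner alternative, hinted at by the preceding remarks, would observe that $u(\cdot,x) \colon S_\pm \to \mathbb{H}$ is a holomorphic map by Lemma \ref{lem:trapped_solution_positive_im}, compose with the Cayley transform $\phi(w)=(w-i)/(w+i)$ to obtain a bounded map into $\mathbb{D}$, and apply a Julia--Carath\'eodory estimate against the slit on which $\phi\circ u \equiv 1$; this loses the explicit constant, however, so I would prefer the direct route above.
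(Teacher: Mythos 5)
Your overall strategy --- extracting the bound directly from the ODE-comparison machinery of Lemmas \ref{lem:solution_left_line} and \ref{lem:straight_line_sub_sol} rather than from complex function theory --- is genuinely different from the paper's proof, and it is viable, but as written it has a gap exactly where the lemma has content. The inequality $\abs{u\of{t,x}} \leq M / \Im\of{t}$ is only nontrivial for $t$ approaching the slit $\left[T, T_1\right]$, i.e.\ for $\Re\of{t} \in \left[T, T_1\right]$ and $\Im\of{t} \to 0$. Your first regime ($\tilde t \geq \tilde t_0 \approx 2\norm{u\of{t_0,\cdot}}_\infty$) handles only the post-resurrection part; the remaining region $T < \Re\of{t} < t_0 + \tilde t_0$, $0 < \Im\of{t} \leq \delta$ is dismissed with the claim that the solution is ``locally bounded by some $M_0$'' so that $M_0 \leq M_0 \delta / \Im\of{t}$. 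No such uniform $M_0$ can exist: the real solution blows up at $T$, the continuation agrees with it on $\left(0,T\right)$, so by continuity $\norm{u\of{t,\cdot}}_\infty$ is unbounded as $t \to T$ from within the spall strip. ``Locally bounded away from the slit'' does not supply a single constant on a region whose closure meets the slit, and that is precisely the region the lemma is about. The fix stays inside your framework: for $0 \leq \tilde t < \tilde t_0$ use the \emph{first} branch of Lemma \ref{lem:straight_line_sub_sol}, which gives $\norm{\eta\of{\tilde t, u\of{t_0+i\tau,\cdot}}}_\infty \leq s_0/\sin\varphi$ with $\varphi \geq C_1 \tau$, i.e.\ again a bound of order $1/\Im\of{t}$ uniformly in $\tilde t \geq 0$ (equivalently, the invariant-disk radius $R^*$ of Lemma \ref{lem:trapped_solution_positive_im} scales like $\norm{u\of{t_0,\cdot}}_\infty^2 / \min_x \Im\of{u\of{t_0+i\tau,x}} \sim C/\tau$). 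With that replacement your two regimes merge into a single $C/\Im\of{t}$ estimate and the argument closes.

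For comparison: the paper proves the lemma by the route you set aside as an ``alternative.'' It uses that $u$ maps the spall strip into the upper half-plane (Lemma \ref{lem:orbit_bounded_hetero}), fixes a disk of radius $\delta/2$ tangent to the real axis at $t^*$, normalizes, conjugates with the M\"obius map $\Gamma\of{z} = z/\of{z - 2i\alpha}$ sending the half-plane to the unit disk, and applies the Schwarz lemma to get $\sup_{\abs{z}\leq r} \abs{\tilde u^x\of{z}} \leq 2\alpha/\of{1 - 2r\delta^{-1}}$, which yields $M/\Im\of{t}$ with $M$ explicitly tied to $\delta$. So the subordination argument does not in fact lose the constant; it is where the paper's stated $\delta$-dependence of $M$ comes from. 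Your direct route, once repaired as above, buys a constant expressed in terms of the initial data ($\norm{u\of{t_0,\cdot}}_\infty$ and the boundary derivatives entering $C_1$) rather than the strip height, which is a legitimate trade-off but should be reconciled with the statement that $M$ is ``related to $\delta$.''
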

\begin{proof}
The image of $ u \of{S_+ \setminus \R,I} $ is contained in the upper half-plane of the complex plane by Lemma \ref{lem:orbit_bounded_hetero}.. The point evaluation $ \delta_x : C^0 \of{I} \to \C $
\[
\delta_x \of{u} := u \of{x},
\]
is a bounded linear functional. The function $ u^x \of{t} := \delta_x \of{ u \of{t} }$ is a holomorphic function. Take now a disk in the upper half-plane tangent to $ t^*$ of radius $ \delta/2$. First assume $ \delta = 2 $. Then the function $ u^x \of{t} $ is a holomorphic function from the unit disk to $ \C$. The proof follows from the classical subordination \citep{Duren_Hp_Spaces}  principle as follows
\begin{enumerate}[label=(\roman*)]
\item Set $ z = t - t^* - i $ and define the function
\[
\tilde u^x \of{z} := u^x \of{t^* + i + z} - u^x \of{t^* + i}.
\]
The function $ \tilde{u}^x $ is a holomorphic function on the unit disk at zero with $ \tilde u^x \of{0} = 0 $. Furthermore, the image $ \tilde{u}^x \of{\D} $ is contained in a shifted upper half-plane.
\item Next we look for a Möbius transformation $ \Gamma \of{z} := \frac{z + b}{c z + d} $, which satisfies
\[
\Gamma \of{0} = 0, \qquad \Gamma \of{\infty} = 1, \qquad \Gamma \of{i \alpha} = -1.
\]
for some $ \alpha \in \R $. The first two conditions yield $ b = 0 $ and $ c = 1 $. The last condition implies $ d = - 2 i \alpha $, thus
\[
\Gamma \of{z} = \frac{z}{z - 2 i \alpha}.
\]
The line $ t \mapsto i \alpha + t $, $ t \in \R $ is mapped to the unit circle since
\[
\abs{\Gamma \of{t + i \alpha}} = \abs{\frac{t + i \alpha}{t - i \alpha}} = 1, \qquad t \in \R.
\]
Now, we choose $ \alpha $ such that $ \tilde u^x \of{\D} \subset \Gamma^{-1} \of{\D} $. The inverse transformation is given by
\[
\Gamma^{-1} \of{z} = \frac{-2 i \alpha z}{1-z}. 
\]
\item Consider the function $ \omega \of{z} = \Gamma \circ \tilde u^x $. The function is a holomorphic self-map of the unit disk and satisfies $ \omega \of{0} = 0 $. Thus by Schwarz lemma $ \abs{\omega \of{z}} \leq \abs{z} $. This implies $ \tilde u^x \of{z} = \Gamma^{-1} \of{ \omega \of{z}} $ and thus 
\[
\sup_{\abs{z} \leq r } \abs{ \tilde u^x \of{z}} \leq \sup_{\abs{z} \leq r } \abs{ \Gamma_\alpha \of{ \omega \of{z}}} \leq \sup_{\abs{z} \leq r } \abs{ \Gamma_\alpha \of{z}} \leq \frac{2 \alpha}{1-r},
\]

for any $ r < 1 $.

\item Rescaling time, we can always rescale a disk of radius $ \delta / 2 $ to the unit disk. This gives the estimate
\[
\sup_{\abs{z} \leq r } \abs{ \tilde u^x \of{z}} \leq \frac{2 \alpha}{1-2 r \delta^{-1}}.
\]
for $ r < \delta/2 $.
\end{enumerate}
\end{proof}

\begin{remark}
This implies that the nontagential limit of $ u^x \of{t} (t-t^*) $ to $ t^*$ exists.
\end{remark}

\begin{remark}
The result is quite astonishing, since we do not just get an a priori estimate on the blow-up rate very easily, but we also get a geometric condition on the range of the image of points close to blow-up from the lower blow-up rate estimate. Suppose, that there exists an interval $ \tilde I \subset I $ and a disk $ \Omega $ in the upper complex plane touching the real axis such, that $ \im \of{u \of{\Omega,\tilde I}} $ is contained in a sector of the upper half-plane, then the solution is bounded on $ \tilde{I} $ up to the real axis. Then we get subordination of the solution by some $ p < 1 $ of the Cayley transform, which contradicts blow-up by Corollary \ref{cor:uniform_bounded_small}.
\end{remark}

\begin{figure}[h]
\centering
\begin{subfigure}[b]{.3\textwidth}
\centering
\begin{tikzpicture}[scale=1.5]

%\draw[thick,decoration={markings, mark=at position 0.75 with {\arrow[ultra thick]{>}}},
%        postaction={decorate}] (0,0) -- (1,0);
%\draw[thick,decoration={markings, mark=at position 0.75 with {\arrow[ultra thick]{>}}},
%        postaction={decorate}] (0,0) -- (-1,0);
%\draw[thick,domain=-pi/4:0,scale=3,samples=500] plot (\x:{-2*cos(2*\x )/cos(\x )});
                
\draw[thick,domain=0:90,samples = 40,decoration={markings, mark=at position 0.75 with {\arrow[ultra thick]{>}}},
       postaction={decorate}] plot ({cos(\x)*sin(2*\x)}, {sin(\x)*sin(2*\x)});

\draw[thick,domain=0:90,samples = 40,decoration={markings, mark=at position 0.75 with {\arrow[ultra thick]{>}}},
       postaction={decorate}] plot ({1.5*cos(\x)*sin(2*\x)}, {1.5*sin(\x)*sin(2*\x)});
       
\draw[thick,domain=0:90,samples = 40,decoration={markings, mark=at position 0.75 with {\arrow[ultra thick]{>}}},
       postaction={decorate}] plot ({cos(\x)*sin(2*\x)}, {-sin(\x)*sin(2*\x)});

\draw[thick,domain=0:90,samples = 40,decoration={markings, mark=at position 0.75 with {\arrow[ultra thick]{>}}},
       postaction={decorate}] plot ({1.5*cos(\x)*sin(2*\x)}, {-1.5*sin(\x)*sin(2*\x)});

\draw[thick,domain=0:90,samples = 40,decoration={markings, mark=at position 0.75 with {\arrow[ultra thick]{>}}},
       postaction={decorate}] plot ({-cos(\x)*sin(2*\x)}, {-sin(\x)*sin(2*\x)});

\draw[thick,domain=0:90,samples = 40,decoration={markings, mark=at position 0.75 with {\arrow[ultra thick]{>}}},
       postaction={decorate}] plot ({-1.5*cos(\x)*sin(2*\x)}, {-1.5*sin(\x)*sin(2*\x)});
       
\draw[thick,domain=0:90,samples = 40,decoration={markings, mark=at position 0.75 with {\arrow[ultra thick]{>}}},
       postaction={decorate}] plot ({-cos(\x)*sin(2*\x)}, {sin(\x)*sin(2*\x)});

\draw[thick,domain=0:90,samples = 40,decoration={markings, mark=at position 0.75 with {\arrow[ultra thick]{>}}},
       postaction={decorate}] plot ({-1.5*cos(\x)*sin(2*\x)}, {1.5*sin(\x)*sin(2*\x)});

\draw[thick,decoration={markings, mark=at position 0.75 with {\arrow[ultra thick]{>}}},
        postaction={decorate}] (0,0) -- (1,0);
\draw[thick,decoration={markings, mark=at position 0.75 with {\arrow[ultra thick]{>}}},
        postaction={decorate}] (0,0) -- (-1,0);

\draw[thick,decoration={markings, mark=at position 0.75 with {\arrow[ultra thick]{<}}},
        postaction={decorate}] (0,0) -- (0,1);
\draw[thick,decoration={markings, mark=at position 0.75 with {\arrow[ultra thick]{<}}},
        postaction={decorate}] (0,0) -- (0,-1);

\end{tikzpicture}
\caption{Solution for $p=3$}
\end{subfigure}%
\begin{subfigure}[b]{.3\textwidth}
\centering
\begin{tikzpicture}[scale=1.5]

\draw[thick,thick,domain=0:60,samples = 40,decoration={markings, mark=at position 0.5 with {\arrow[ultra thick]{>}}},
       postaction={decorate}] plot ({cos(\x)*sin(3*\x)}, {sin(\x)*sin(3*\x)});

\draw[thick,thick,domain=0:60,samples = 40,decoration={markings, mark=at position 0.5 with {\arrow[ultra thick]{>}}},
       postaction={decorate}] plot ({1.5*cos(\x)*sin(3*\x)}, {1.5*sin(\x)*sin(3*\x)});
       
\draw[thick,thick,domain=0:60,samples = 40,decoration={markings, mark=at position 0.5 with {\arrow[ultra thick]{>}}},
       postaction={decorate}] plot ({cos(\x)*sin(3*\x)}, {-sin(\x)*sin(3*\x)});

\draw[thick,thick,domain=0:60,samples = 40,decoration={markings, mark=at position 0.5 with {\arrow[ultra thick]{>}}},
       postaction={decorate}] plot ({1.5*cos(\x)*sin(3*\x)}, {-1.5*sin(\x)*sin(3*\x)});

\draw[thick,thick,domain=0:60,samples = 40,decoration={markings, mark=at position 0.5 with {\arrow[ultra thick]{<}}},
       postaction={decorate}] plot ({-cos(\x)*sin(3*\x)}, {-sin(\x)*sin(3*\x)});

\draw[thick,thick,domain=0:60,samples = 40,decoration={markings, mark=at position 0.5 with {\arrow[ultra thick]{<}}},
       postaction={decorate}] plot ({-1.5*cos(\x)*sin(3*\x)}, {-1.5*sin(\x)*sin(3*\x)});
       
\draw[thick,thick,domain=0:60,samples = 40,decoration={markings, mark=at position 0.5 with {\arrow[ultra thick]{<}}},
       postaction={decorate}] plot ({-cos(\x)*sin(3*\x)}, {sin(\x)*sin(3*\x)});

\draw[thick,thick,domain=0:60,samples = 40,decoration={markings, mark=at position 0.5 with {\arrow[ultra thick]{<}}},
       postaction={decorate}] plot ({-1.5*cos(\x)*sin(3*\x)}, {1.5*sin(\x)*sin(3*\x)});

\draw[thick,thick,domain=60:120,samples = 40,decoration={markings, mark=at position 0.5 with {\arrow[ultra thick]{>}}},
       postaction={decorate}] plot ({cos(\x)*sin(3*\x)}, {sin(\x)*sin(3*\x)});

\draw[thick,thick,domain=60:120,samples = 40,decoration={markings, mark=at position 0.5 with {\arrow[ultra thick]{>}}},
       postaction={decorate}] plot ({1.5*cos(\x)*sin(3*\x)}, {1.5*sin(\x)*sin(3*\x)});

\draw[thick,thick,domain=60:120,samples = 40,decoration={markings, mark=at position 0.5 with {\arrow[ultra thick]{>}}},
       postaction={decorate}] plot ({cos(\x)*sin(3*\x)}, {-sin(\x)*sin(3*\x)});

\draw[thick,thick,domain=60:120,samples = 40,decoration={markings, mark=at position 0.5 with {\arrow[ultra thick]{>}}},
       postaction={decorate}] plot ({1.5*cos(\x)*sin(3*\x)}, {-1.5*sin(\x)*sin(3*\x)});

\draw[thick,decoration={markings, mark=at position 0.75 with {\arrow[ultra thick]{>}}},
        postaction={decorate}] (0,0) -- (1,0);
\draw[thick,decoration={markings, mark=at position 0.75 with {\arrow[ultra thick]{<}}},
        postaction={decorate}] (0,0) -- (-1,0);

\draw[thick,decoration={markings, mark=at position 0.75 with {\arrow[ultra thick]{>}}},
        postaction={decorate}] (0,0) -- (-1.3*0.5,-1.3*0.86);
\draw[thick,decoration={markings, mark=at position 0.75 with {\arrow[ultra thick]{<}}},
        postaction={decorate}] (0,0) -- (1.3*0.5,1.3*0.86);

\draw[thick,decoration={markings, mark=at position 0.75 with {\arrow[ultra thick]{<}}},
        postaction={decorate}] (0,0) -- (1.3*0.5,-1.3*0.86);
\draw[thick,decoration={markings, mark=at position 0.75 with {\arrow[ultra thick]{>}}},
        postaction={decorate}] (0,0) -- (-1.3*0.5,1.3*0.86);

\end{tikzpicture}
\caption{Solution for $p=4$}
\end{subfigure}%
\caption{Real time ODE invariant solutions for different $\dot x = x^p $} \label{fig:invariant_regions_1}
\end{figure}
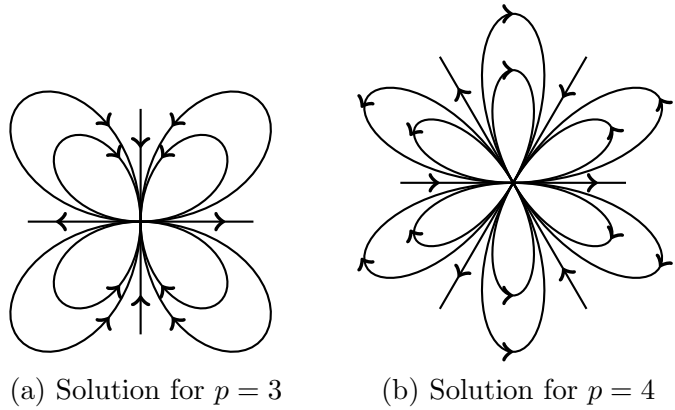

\begin{remark}
We expect that the proofs of this paper can be generalized for a general $ p > 1 $. The main reason is, that also in the case $ p > 1 $ there exists invariant regions of the ODE flow for positive solutions, see Figure \ref{fig:invariant_regions_1}. 
\end{remark}

\newpage

\section{Discussion}

We have shown that heteroclinic orbits introduce unbounded solutions and in particular blow-up solutions in the complex domain.
 \\
Furthermore, we studied the concrete example of the quadratic nonlinear heat equation. Here, we were able to derive finer results. For example there exists an analytic continuation of the blow-up orbit to a complex time strip cut out a finite time interval even though the blow-up is complete. Furthermore we showed that the analytic continuation along different paths around the blow-up point does not close. 

Quite recently a paper \citep{cho2016} with numerical simulations of the quadratic nonlinear heat equation with periodic boundary conditions in complex time appeared. The numerical simulations suggest, similar to our rigorous proven results in the Dirichlet case, that complex time continuation after the blow-up introduces a Riemann surface and in particular continuations along different paths around the singularity do not close up. Even though the analysis is purely numerical it indicates how continued solutions look like and give a starting point of a more detailed study of continued solutions. Especially a better estimate of the resurrection time $ T_1 $ is important, since we expect blow-up to only happen at a single point on the complex time plane. \\
A second direction is to study the connection of bounded and blow-up solutions in other settings. This is in particular interesting for the Navier-Stokes equation, where the problem of blow-up is still an open problem.

%In this chapter we have studied the complex time continuation of the quadratic heat equation in the simplest case. We have shown, that the solution can be analytically continued back to the real time axis where a branch type behaviour is introduced. This opens the field for a more detailed study of the branch type, e.g. is the singularity just one point and if it is how does the local time space profile look like. Furthermore we have give asymptotic expansions of continued solution to the problem. The next step would be to actually construct solution having these predicted expansions.
%
%Note, also that the first section of this chapter is exactly the same in the case of general $p$. Also the ode solution gives a convex invariant region and analogously is the initial condition contained in this ode solution. Even the blow-up rate estimates are the analogous.
%
%It is also true, that similar results hold in the case of Neumann boundary conditions.

%\newpage

%\chapter{Conclusion}

%\nocite{*}
\bibliography{bibliography.bib}

\end{document}